\newtheorem{theorem}{Theorem}[section]
\newtheorem{lemma}[theorem]{Lemma}
\newtheorem{corollary}[theorem]{Corollary}
\theoremstyle{definition}
\newtheorem{definition}{Definition}[section]
\newtheorem{remark}{Remark}[section]
\DeclareMathOperator{\w}{wd}
\DeclareMathOperator{\rt}{rot}
\DeclareMathOperator{\sign}{sign}
\title{Generalized Schr\"oder paths arising from a combinatorial 
interpretation of generalized Laurent bi-orthogonal polynomials}
\author{Mawo Ito\thanks{Department of Applied Mathematics and Physics, Graduate School of Infomatics, 
Kyoto University, Kyoto 606-8501, Japan.}}
\date{}
\begin{document}
\maketitle
\begin{abstract}
Lattice paths called $\ell$-Schr\"oder paths
are introduced.
They are paths
on the upper half-plane 
consisting of $\ell+2$ types of steps: $(i,\ell-i)$ for $i=0,\ldots,\ell$, and $(1,-1)$. 
Those paths generalize Schr\"oder paths and some variants, such as
$m$-Schr\"oder paths by Yang and Jiang and Motzkin-Schr\"oder paths by Kim and Stanton.
We show that $\ell$-Schr\"oder paths arise naturally from a combinatorial interpretation 
of the moments of generalized Laurent bi-orthogonal polynomials
introduced by Wang, Chang, and Yue.
We also show that some generating functions of
non-intersecting $\ell$-Schr\"oder paths can be factorized in closed forms.
\end{abstract}
\section{Introduction}
A Schr\"oder path is a path in the $x$-$y$ plane with three types of elementary steps $(1,0),(0,1)$, and $(1,-1)$,
never going below the $x$-axis.
Schr\"oder paths are among the well-studied lattice paths in combinatorics, 
having attracted attention due to their connection with exactly solvable models~\cite{FG,S,BDS}. 
A typical example of such a model is the domino tilings of the Aztec diamonds
introduced by Elkies et al.~\cite{EKLP}, 
which exhibits a one-to-one correspondence with non-intersecting tuples of 
Schröder paths~\cite{J}. 
Schr\"oder paths have
a notable property that
certain generating functions of 
non-intersecting tuplets are expressed in a closed form~\cite{K2,EuFu}, which
 is one of the critical
ingredients to enumerate solvable models.

In this paper, 
we propose a generalization of 
Schr\"oder paths that preserves this notable property of generating functions.
\begin{definition}
	\label{definition:1}
	For a positive integer $\ell$, an \emph{$\ell$-Schr\"oder path} is a lattice path in the 
	$x$-$y$ plane with steps 
	$\mathsf{a}_i\triangleq(i,\ell-i)$ for $i=0\ldots,\ell$ and $\mathsf{a}_{\ell+1}\triangleq(1,-1)$,
	never going below the $x$-axis.
\end{definition}
\begin{figure}[htbp]
  \begin{minipage}[b]{0.79\linewidth}
    \centering
	\includegraphics{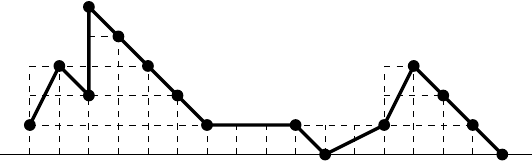}
  \end{minipage}
  \begin{minipage}[b]{0.20\linewidth}
    \centering
	\includegraphics{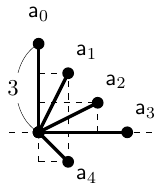}
  \end{minipage}
    \caption{
		Left: an example of $3$-Schr\"oder path.
	Right: the five types of steps in $3$-Schr\"oder path. }
  \label{fig:Schroder}
\end{figure}
See Figure \ref{fig:Schroder} for an example of $\ell$-Schr\"oder paths when $\ell=3$.

In the literature,
there are several ways to extend Schr\"oder paths.
One way is to use steps $(0,m)$ and $(1,m-1)$ instead of $(0,1)$ and $(1,0)$.
Such a generalization is called $m$-Schr\"oder paths in \cite{YJ}.
Another way is to add a new step $(1,1)$ to Schr\"oder paths.
Such paths are called Motzkin-Schr\"oder paths and were introduced by Kim and Stanton \cite{KimStanton}
in connection with orthogonal functions.
In \cite{KimStanton}, they found that certain generating functions of non-intersecting tuplet of 
Motzkin-Schr\"oder paths admit a closed-form factorization.
The $\ell$-Schr\"oder paths in this paper essentially contain both $m$-Schr\"oder and 
Motzkin-Schr\"oder paths as special cases.

The aim of this paper is twofold: 
(i) to describe that $\ell$-Schr\"oder paths arise naturally from a
combinatorial interpretation of orthogonal functions;
(ii) to show that
certain generating functions of non-intersecting tuplets of $\ell$-Schr\"oder paths
are written in closed forms.
See Figure \ref{fig:PiOmega} in Section \ref{section:4} for an example of 
a non-intersecting tuplet that
we will consider.

The combinatorial theory of orthogonal polynomials (OPs) was established by Viennot~\cite{V}
using weighted paths.
A similar approach was developed by Kamioka~\cite{K1,K2}
for Laurent bi-orthogonal polynomials (LBPs),
the analogue of OPs introduced in the field of 
two-point Pad\'e approximation and discrete integrable systems~\cite{Z}.
In a series of his studies, Kamioka interpreted LBPs using weighted Schr\"oder paths.


LBPs are, as in OPs, characterized by so-called the fundamental three-term recurrence relation \cite{Z}
\begin{align}
	P_{n+1}(x)=(x-d_n)P_n(x)-b_nxP_{n-1}(x),
\end{align}
where $d_n$ and $b_n$ are non-zero.
Generalizations of LBPs are obtained by increasing the number of terms in the recurrence as
\begin{align}
	\label{eq:recurrence}
	P_n(x)=-\sum_{i=1}^{\ell-1}a_{i,n-\ell}P_{n-i}(x)
	+(x^{\ell}-a_{\ell,n-\ell})P_{n-\ell}(x)
	-a_{\ell+1,n-\ell}x^{\ell}P_{n-\ell-1}(x),\quad n\geq \ell,
\end{align}
where $a_{\ell,i}$ and $a_{\ell+1,i}$ are non-vanishing, and the other $a_{i,j}$
are some constants.
Such generalized LBPs are studied by several authors~\cite{WCY,KK,BV}.
We refer to these generalized LBPs as $\ell$-LBPs
since the letter $\ell$ is used as a fixed parameter in \cite{WCY}.
We show that $\ell$-Schr\"oeder paths emerge
from a combinatorial interpretation of $\ell$-LBPs.

To show the second aim of this paper, 
we employ the technique developed by Eu and Fu \cite{EuFu}. 
They enumerate non-intersecting tuplets of Schr\"oder paths 
in closed form
by combining two kinds of paths, i.e., Schr\"oder and small Schr\"oder paths.
We define new paths called small $\ell$-Schr\"oder paths and generalize their technique.

This paper is organized as follows:
In Section \ref{section:2}, we recall the definition and fundamentals of 
$\ell$-LBPs and present a Favard-type theorem 
essential for a combinatorial interpretation.
In Section \ref{section:3}, 
we interpret the moments of $\ell$-LBPs using $\ell$-Schr\"oder paths (Theorem \ref{theorem:main1}).
This theorem parallels the Kamioka development~\cite{K1} for conventional LBPs.
In section \ref{section:4},
we examine the enumerative aspects of $\ell$-Schr\"oder paths. 
We explore the relationship between $\ell$-Schr\"oder paths and 
small $\ell$-Schr\"oder paths. 
Using the methods of Eu and Fu~\cite{EuFu}, 
we express generating functions of non-intersecting $\ell$-Schr\"oder paths in a closed form
(Theorem \ref{theorem:Pi_explicit}).
As an application of the results in this chapter, we discuss 
specialized generating functions of $\ell$-Schr\"oder paths, namely the 
$\ell$-Narayana polynomials.

\section{$\ell$-LBPs and Favard-type theorem}
\label{section:2}
In this section, we briefly review the fundamentals of $\ell$-LBPs, including 
their definitions, recurrence relations, Favard-type theorem, and determinant expressions.

Let $\mathbb{K}$ be a field, and $\mathbb{N}_0$ be 
the set of non-negative integers.
Let $\mathcal{L}\colon\mathbb{K}[x,x^{-1}]\to\mathbb{K}$
be a linear functional on the space of Laurent polynomials over $\mathbb{K}$.
A sequence of monic polynomials $(P_n(x))_{n=0}^\infty$
with each $P_n(x)$ having exact degree $n$
is called an 
\emph{$\ell$-Laurent 
bi-orthogonal polynomials ($\ell$-LBPs)} with respect to $\mathcal{L}$ if
it satisfies the orthogonality
\begin{align}
	\label{eq:ellLBP_orthogonality}
\mathcal{L}\left[P_n(x)x^{-k\ell}\right]=h_n\delta_{nk},\qquad n,k\in\mathbb{N}_0,\ 0\leq k\leq n
\end{align}
for some non-vanishing constant $h_n\neq 0$.
The symbol $\delta_{n,k}$ denotes the Kronecker delta.
In the case $\ell=1$, the conventional LBPs are recovered.

Each polynomial $P_n(x)$ in $\ell$-LBPs satisfies the 
$(\ell+2)$-term recurrence relation \eqref{eq:recurrence} for $n\geq\ell$ (cf. \cite{WCY}).
Conversely, the following theorem states that any sequence of 
polynomials defined by the recurrence \eqref{eq:recurrence} 
with suitable initial condition 
is a $\ell$-LBPs.
\begin{theorem}[Favard-type theorem for $\ell$-LBPs]
	\label{theorem:Favard}
	Let $a_{i,j}\in\mathbb{K}$ be constants such that 
	$a_{\ell,i}\neq 0$ and $a_{\ell+1,i}\neq 0$ for all $i\in\mathbb{N}_0$.
	Let $p_n(x)\in\mathbb{K}[x]$ be monic polynomials of degree $n$ for $n=0,\ldots,\ell-1$.
	The sequence of polynomials $(P_n(x))_{n=0}^\infty$ defined by 
	the recurrence \eqref{eq:recurrence} with initial conditions $P_{-1}(x)\triangleq 0$ and $P_n(x)\triangleq p_n(x)$ for 
$n=0,\ldots,\ell-1$, 
	is an $\ell$-LBPs with respect to some linear functional $\mathcal{L}$.
	Furthermore, if we impose the condition 
	$\mathcal{L}[1]=1$, such a linear functional $\mathcal{L}$ is uniquely determined.
\end{theorem}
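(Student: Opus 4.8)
The plan is to construct the linear functional $\mathcal{L}$ directly by defining its values on a suitable spanning set of $\mathbb{K}[x,x^{-1}]$ and then verifying that the orthogonality relation \eqref{eq:ellLBP_orthogonality} holds by induction on $n$. Since $\mathcal{L}$ is linear, it is determined by its values on the monomials $x^m$ for $m\in\mathbb{Z}$. The natural strategy is to encode these as moments and to show that the recurrence \eqref{eq:recurrence} forces consistency conditions that pin them down. Concretely, I would first set $\mathcal{L}[1]=1$ (the normalization) and then use the orthogonality conditions we \emph{want} to hold, namely $\mathcal{L}[P_n(x)x^{-k\ell}]=0$ for $0\le k<n$, as defining equations for the unknown moments. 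The key observation is that each $P_n$ is monic of degree $n$, so expanding $P_n(x)x^{-k\ell}$ expresses a previously-undetermined moment in terms of lower-order ones that have already been fixed, giving a triangular system that can be solved step by step.

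The main structural step is to run an induction that simultaneously establishes two things: that $\mathcal{L}$ can be consistently extended to evaluate $\mathcal{L}[P_n(x)x^{-k\ell}]$ for all relevant $k$, and that the resulting values satisfy $h_n\neq 0$. First I would treat the base cases $n=0,\ldots,\ell-1$ using the given monic initial polynomials $p_n(x)$; here one defines just enough moments to force $\mathcal{L}[p_n(x)x^{-k\ell}]=h_n\delta_{nk}$ for $0\le k\le n$. Then, for the inductive step, I would feed the recurrence \eqref{eq:recurrence} into $\mathcal{L}[P_n(x)x^{-k\ell}]$ and show that orthogonality of $P_n$ against $x^{-k\ell}$ for $k<n$ follows automatically from the orthogonality already established for $P_{n-1},\ldots,P_{n-\ell-1}$, provided one newly defined moment is chosen correctly. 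The condition $h_n\ne 0$ should come out as a consequence of the hypotheses $a_{\ell,i}\neq 0$ and $a_{\ell+1,i}\neq 0$, which is presumably exactly why those coefficients are required to be non-vanishing.

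The hard part will be verifying that the newly introduced moment at each stage is \emph{unambiguously} determined, i.e.\ that the several orthogonality constraints that a single new moment must satisfy are mutually consistent rather than over-determined. Because each step of the recurrence raises the degree by $\ell$ (through the $x^\ell$ and $x^\ell$-coefficient terms) while the orthogonality is tested against powers $x^{-k\ell}$, one must check that the bookkeeping of which moment is being solved for lines up correctly across all $k$; this is where the specific form of \eqref{eq:recurrence}, with its jump of $\ell$ in degree, is essential. I expect the cleanest way to handle this is to reformulate the moments $\mathcal{L}[x^m]$ in terms of a doubly-indexed array and to show that the recurrence propagates determinacy along one direction of that array without conflict.

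For uniqueness, once $\mathcal{L}[1]=1$ is imposed, I would argue that every moment $\mathcal{L}[x^m]$ is forced: the orthogonality relations together with monicity express each moment in terms of strictly lower ones, so by the same triangular structure used in existence there is no remaining freedom. Thus the existence construction and the uniqueness argument are really two readings of the same triangular system, and assembling them yields the theorem.
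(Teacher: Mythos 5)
Your overall strategy---defining the moments by imposing the desired orthogonality relations and then proving the remaining relations by induction via the recurrence \eqref{eq:recurrence}---is the same as the paper's, but there is a genuine gap at the central technical point: the negative-index moments. Your claim that monicity makes the defining system triangular (``expanding $P_n(x)x^{-k\ell}$ expresses a previously-undetermined moment in terms of lower-order ones'') is true only on the positive side, i.e.\ for the equations $\mathcal{L}[P_n(x)]=0$ with $k=0$, where the new moment $\mu_n$ enters with the leading coefficient $1$. For $k\geq 1$ the roles are reversed: in $\mathcal{L}[P_n(x)x^{-k\ell}]=\sum_{j}([x^j]P_n(x))\,\mu_{j-k\ell}$ the monic top coefficient multiplies a \emph{higher}-index, already-known moment, while the genuinely new unknowns $\mu_{-k\ell},\ldots,\mu_{-k\ell+\ell-1}$ enter through the low-degree coefficients $[x^0]P_n,\ldots,[x^{\ell-1}]P_n$, over which you have no individual control. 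No ordering of the unknowns repairs this: for $\ell=3$, solving for $\mu_{-1}$ from $\mathcal{L}[P_2x^{-3}]=0$ (the only equation in which $\mu_{-1}$ carries the monic coefficient and no unknown of larger index appears) requires $\mu_{-2}$ and $\mu_{-3}$; solving for $\mu_{-2}$ from $\mathcal{L}[P_1x^{-3}]=0$ requires $\mu_{-3}$; and solving for $\mu_{-3}$ in the same monic fashion forces you to use $\mathcal{L}[P_3x^{-6}]=0$, which drags in the next block $\mu_{-4},\mu_{-5},\mu_{-6}$---an infinite regress, not a triangular system.

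The missing idea is that the negative moments must be determined $\ell$ at a time: the block $\mu_{-k\ell},\ldots,\mu_{-k\ell+\ell-1}$ is the solution of the $\ell\times\ell$ linear system $\mathcal{L}[P_{k+i}(x)x^{-k\ell}]=0$, $i=1,\ldots,\ell$, and one must \emph{prove} that this system is nonsingular. That is exactly the paper's Lemma \ref{lemma:det_lemma1}: the coefficient matrix of the block system has determinant $\det([x^j]P_{k+i}(x))_{i,j=0}^{\ell-1}=(-1)^{k\ell}\prod_{i=0}^{k-1}a_{\ell,i}$, established by deriving a recurrence for this determinant from \eqref{eq:recurrence}; this is also precisely where the hypothesis $a_{\ell,i}\neq 0$ is used (the hypothesis $a_{\ell+1,i}\neq 0$ enters separately, to get $h_n\neq 0$ from the identity $\mathcal{L}[P_n(x)x^{-n\ell}]=-\frac{a_{\ell+1,n}}{a_{\ell,n}}\mathcal{L}[P_{n-1}(x)x^{-(n-1)\ell}]$, which matches the part of your sketch that is fine in outline). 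You do sense a ``hard part'' about the new moments being unambiguously determined, but you diagnose it as possible over-determination to be fixed by careful bookkeeping or a doubly-indexed reformulation; the real obstruction is that the system is simply not triangular there, and overcoming it requires the determinant evaluation, not a reorganization of indices. Your uniqueness argument inherits the same gap, since it appeals to the same nonexistent triangular structure; in the paper, uniqueness follows because the chosen block systems have unique solutions by Lemma \ref{lemma:det_lemma1} and any admissible functional must satisfy them.
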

Theorem \ref{theorem:Favard} is fundamental in the theory of $\ell$-LBPs. However,
the author cannot find the reference that explicitly provides its proof, so 
we prove it here.
We need a lemma to prove Theorem \ref{theorem:Favard}.
\begin{lemma}
	\label{lemma:det_lemma1}
	Let $(P_n(x))_{n=0}^\infty$ be as in Theorem \ref{theorem:Favard}.
	For any non-negative integer $k\in\mathbb{N}_0$, we have
 	\begin{align}
	\label{eq:det_lemma1}
		\det\left([x^j]P_{k+i}(x)\right)_{i,j=0}^{\ell-1}=(-1)^{k\ell}\prod_{i=0}^{k-1}a_{\ell,i},
	\end{align}
	where $[x^j]P_{k+i}(x)$ denotes the coefficients of $x^{j}$ in the polynomial $P_{k+i}(x)$.
\end{lemma}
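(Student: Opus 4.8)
The plan is to prove the identity by induction on $k$, writing $D_k$ for the determinant on the left-hand side of \eqref{eq:det_lemma1}. For the base case $k=0$, the rows of the matrix are the coefficient vectors of $P_0,\ldots,P_{\ell-1}$; since each $P_i=p_i$ is monic of exact degree $i$, the entry $[x^j]P_i$ vanishes whenever $j>i$ and equals $1$ when $j=i$. Hence the matrix is lower triangular with unit diagonal, so $D_0=1$, which matches the empty product on the right.

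For the inductive step I would relate $D_{k+1}$ to $D_k$ by invoking the recurrence \eqref{eq:recurrence} at $n=k+\ell$, namely
\[
P_{k+\ell}(x)=-\sum_{i=1}^{\ell-1}a_{i,k}P_{k+\ell-i}(x)+(x^{\ell}-a_{\ell,k})P_{k}(x)-a_{\ell+1,k}x^{\ell}P_{k-1}(x).
\]
The crucial observation, which I expect to be the real engine of the proof, is that multiplication by $x^\ell$ raises every degree by $\ell$, so for any column index $0\le j\le \ell-1$ one has $[x^j]\bigl(x^\ell P_k(x)\bigr)=[x^j]\bigl(x^\ell P_{k-1}(x)\bigr)=0$. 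Thus, restricted to the columns $j=0,\ldots,\ell-1$ that make up our matrices, the recurrence collapses to
\[
[x^j]P_{k+\ell}(x)=-a_{\ell,k}\,[x^j]P_k(x)-\sum_{i=1}^{\ell-1}a_{i,k}\,[x^j]P_{k+\ell-i}(x),\qquad 0\le j\le \ell-1,
\]
so the bottom row of the matrix defining $D_{k+1}$ is a linear combination of the $P_k$-row together with the rows indexed by $P_{k+1},\ldots,P_{k+\ell-1}$, which are precisely the other rows of that same matrix.

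From here the argument is bookkeeping on determinants. The rows of $D_{k+1}$ are the coefficient vectors of $P_{k+1},\ldots,P_{k+\ell}$; adding $\sum_{i=1}^{\ell-1}a_{i,k}$ times the appropriate existing rows to the last row, an operation that leaves the determinant unchanged, replaces that last row by $-a_{\ell,k}$ times the $P_k$-row. Factoring out the scalar $-a_{\ell,k}$ and then cyclically moving the $P_k$-row from the bottom to the top, an $\ell$-cycle on the rows contributing the sign $(-1)^{\ell-1}$, turns the remaining matrix into the one defining $D_k$. This yields the one-step recursion $D_{k+1}=(-1)^{\ell-1}(-a_{\ell,k})D_k=(-1)^{\ell}a_{\ell,k}D_k$. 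Iterating from $D_0=1$ gives $D_k=\prod_{m=0}^{k-1}(-1)^\ell a_{\ell,m}=(-1)^{k\ell}\prod_{m=0}^{k-1}a_{\ell,m}$, as claimed.

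The only points requiring care are the vanishing of the $x^\ell$-terms in the low-degree columns, which is exactly what lets the recurrence act internally on the rows of the matrix, and the sign of the cyclic row permutation; neither poses a genuine difficulty once isolated.
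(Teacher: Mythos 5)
Your proof is correct and follows essentially the same route as the paper's: both establish the base case $D_0=1$ from monicity, exploit that $[x^j](x^\ell P_k)=[x^j](x^\ell P_{k-1})=0$ for $j<\ell$ so the recurrence \eqref{eq:recurrence} acts internally on the matrix, and derive the one-step recursion $D_{k+1}=(-1)^\ell a_{\ell,k}D_k$ via elementary operations plus a cyclic permutation. The only cosmetic difference is that you perform row operations on the matrix as written in the lemma, while the paper works with column operations on its transpose.
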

\begin{proof}
	Let 
	$T_k\triangleq \det([x^j]P_{k+i}(x))_{i,j=0}^{\ell-1}$
	be the left-hand side of \eqref{eq:det_lemma1}.
	We derive a recurrence for $T_k$.
	It is easy to check that $T_0=1$ by using the monicity of $p_k(x)$ for $k=0,\ldots,\ell-1$.

	Let us think about $T_k$ for $k\geq 1$.
	By multiplying the first column by $a_{\ell,k}$, we get
	\begin{align}
		a_{\ell,k}T_k=\det
		\left[
			\begin{array}{cccc}
				\left[x^0\right]a_{\ell,k}P_k(x)& \left[x^0\right]P_{k+1}(x)&
				\cdots &\left[x^0\right]P_{k+\ell-1}(x)\\
				\vdots&&&\vdots\\
				\left[x^{\ell-1}\right]a_{\ell,k}P_k(x)& \left[x^{\ell-1}\right]P_{k+1}(x)&
				\cdots &\left[x^{\ell-1}\right]P_{k+\ell-1}(x)
			\end{array}
		\right].
	\end{align}
	Multiplying the $j$-th column by $a_{\ell-j,k}$ and adding it to the first ($0$-th)
	column for $j=1,\ldots,\ell-1$
	we have
	\begin{align}
		\label{eq:lemma2_proof}
		a_{\ell,k}T_k=\det
		\left[
			\begin{array}{cccc}
				\left[x^0\right]F_k& \left[x^0\right]P_{k+1}(x)&
				\cdots &\left[x^0\right]P_{k+\ell-1}(x)\\
				\vdots&&&\vdots\\
				\left[x^{\ell-1}\right]F_k& \left[x^{\ell-1}\right]P_{k+1}(x)&
				\cdots &\left[x^{\ell-1}\right]P_{k+\ell-1}(x)
			\end{array}
		\right],
	\end{align}
	where we write $F_k\triangleq \sum_{i=1}^\ell a_{i,k}P_{k+\ell-i}$.
	From the recurrence \eqref{eq:recurrence}, we have
	$[x^i]F_k=-[x^i]P_{k+\ell}$
	for $k\geq 0$ and $i=0,\ldots,\ell-1$.
	Substituting this into 
		\eqref{eq:lemma2_proof}, 
	we get the recurrence $T_{k+1}=(-1)^\ell a_{\ell,k}T_k$ for $k\geq 0$.
\end{proof}

\begin{proof}[Proof of Theorem \ref{theorem:Favard}]
	We define a linear functional $\mathcal{L}$ by defining its moments 
	$\mu_n=\mathcal{L}[x^n]\ (n\in\mathbb{Z})$ inductively on $n$.
	Let us define the {\it positive-side} moments $\mu_{n}\ (n\geq 0)$ by
	$\mathcal{L}[1]=1$ and equations
	\begin{align}
		\label{eq:Favard_positive}
		\mathcal{L}[P_n(x)]=0,\qquad n=1,2,3,\ldots.
	\end{align}
	Also, define the {\it negative-side} moments $\mu_{n}\ (n<0)$ by solving
	the system of equations 
	\begin{align}
		\label{eq:Favard_negative}
		\mathcal{L}[P_{n+i}(x)x^{-n\ell}]=0,\qquad i=1,\ldots,\ell
	\end{align}
	for $n=1,2,\ldots$,
	as Lemma \ref{lemma:det_lemma1} shows 
	the system \eqref{eq:Favard_negative} has a unique solution.
	Note that the moments $\mu_{-\ell n+i}\ (i=0,1,\ldots,\ell-1)$ are determined from equations \eqref{eq:Favard_negative}.
	This process determines the moments $\mu_{n}\ (n\in\mathbb{Z})$ uniquely.

	Next, 
	we show that the linear functional $\mathcal{L}$ defined by the moments $\mu_n\ (n\in\mathbb{Z})$
	as above 
	satisfies the orthogonality
	\begin{align}
		\label{eq:Favard_orth1}
		\mathcal{L}[P_n(x)x^{-k\ell}]&=0,\qquad k,n\in\mathbb{N}_0, k< n,\ {\rm and}\\
		\label{eq:Favard_orth2}
		\mathcal{L}[P_n(x)x^{-n\ell}]&\neq 0,\qquad n\in\mathbb{N}_0.
\end{align}
The equations \eqref{eq:Favard_orth1} hold for $k=0$ and $k=n-\ell,\ldots,n-1$ by the definition of 
the moments.
In other words, the equation \eqref{eq:Favard_orth1} is already shown when  $n\leq \ell+1$.
Therefore, we show the remaining cases $n\geq \ell+2$ by induction on $n$. 
Assume that \eqref{eq:Favard_orth1} 
holds for all $n$ less than $N$, where $N\geq\ell+2$.
Using the recurrence \eqref{eq:recurrence}, one can write $\mathcal{L}[P_N(x)x^{-k\ell}]$ as 
a linear combination of 
$\mathcal{L}[P_{N-i}(x)x^{-k\ell}]\ (i=1,\ldots,\ell-1),\ \mathcal{L}[P_{N-\ell}(x)x^{-(k-1)\ell}],\ 
\mathcal{L}[P_{N-\ell}(x)x^{-k\ell}]$, and $\mathcal{L}[P_{N-\ell-1}(x)x^{-(k-1)\ell}]$, 
all of which are zero by the inductive hypothesis.
Thus \eqref{eq:Favard_orth1} holds when $n=N$.

The equation \eqref{eq:Favard_orth2} is shown by
a recurrence
\begin{align}
	\label{eq:Favard_orth3}
	\mathcal{L}[P_n(x)x^{-n\ell}]=-\frac{a_{\ell+1,n}}{a_{\ell,n}}\mathcal{L}[P_{n-1}(x)x^{-(n-1)\ell}],
	\qquad n\geq 1,
\end{align}
which is derived from multiplying both sides of \eqref{eq:recurrence} by $x^{-(n-\ell)\ell}$ and acting $\mathcal{L}$.
\end{proof}

Additionally, we would like to discuss the relationship between the moments of $\ell$-LBPs
defined by the recurrence \eqref{eq:recurrence} 
with different initial conditions.
We define two $\ell$-LBPs as equivalent if they satisfy 
the recurrence \eqref{eq:recurrence} with identical coefficients $a_{i,j}$. 
The set of all $\ell$-LBPs is divided into equivalence classes based on this equivalence relation. 
Within each equivalence class, there exists a unique $\ell$-LBPs $(P_n(x))_{n=0}^\infty$ 
whose first $\ell$ polynomials are monomials, i.e., 
$P_i(x)=x^i$ for $i=0,\ldots,\ell-1$. 
We refer to such $\ell$-LBPs as \emph{primitive}. 
The following theorem states that the moments of arbitrary $\ell$-LBPs can be 
expressed as a linear combination of the moments of equivalent primitive $\ell$-LBPs.

\begin{theorem}
	\label{theorem:moment_primitive}
	Let $(P_n(x))_{n=0}^\infty$ be a primitive $\ell$-LBPs, and let 
	$(\widetilde{P}_n(x))_{n=0}^\infty$ be a equivalent $\ell$-LBPs
	with first $\ell$ polynomials defined as 
	\begin{align}
		(\widetilde{P}_0(x)\ \widetilde{P}_1(x)\ \widetilde{P}_2(x)\ \cdots\ \widetilde{P}_{\ell-1}(x))^t
		\triangleq
		S(1\ x\ x^2\ \cdots\ x^{\ell-1})^t,
	\end{align}
	where $S$ is an 
	arbitrary
	$\ell\times\ell$ lower triangular matrix with $1$'s on its diagonal.
	Furthermore,
	let $(\mu_n)_{n\in\mathbb{Z}}$ and $(\widetilde{\mu}_n)_{n\in\mathbb{Z}}$ represent
	the moments of 
	linear functional 
	with $\mu_0=\widetilde{\mu}_0=1$ 
	associated with
	$(P_n(x))_{n=0}^\infty$ and $(\widetilde{P}_n(x))_{n=0}^\infty$, respectively.
	Then, we have 
	\begin{align}
		\label{eq:def_new_moment}
		\left(
		\begin{array}{c}
			\widetilde{\mu}_{m\ell}\\
			\widetilde{\mu}_{m\ell+1}\\
			\vdots\\
			\widetilde{\mu}_{m\ell+\ell-1}
		\end{array}
		\right)
		=S^{-1}
		\left(
		\begin{array}{c}
			\mu_{m\ell}\\
			\mu_{m\ell+1}\\
			\vdots\\
			\mu_{m\ell+\ell-1}
		\end{array}
		\right)
	\end{align}
	for all $m\in\mathbb{Z}$.
\end{theorem}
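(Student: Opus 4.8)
The plan is to realize the block-wise linear map $S^{-1}$ on moments as a single linear operator on Laurent polynomials, and then to invoke the uniqueness clause of Theorem~\ref{theorem:Favard}. Writing each integer uniquely as $n=m\ell+r$ with $m\in\mathbb{Z}$ and $0\le r\le \ell-1$, I would define a $\mathbb{K}$-linear operator $\Phi\colon\mathbb{K}[x,x^{-1}]\to\mathbb{K}[x,x^{-1}]$ on the monomial basis by $\Phi(x^{m\ell+r})\triangleq\sum_{s=0}^{\ell-1}(S^{-1})_{rs}\,x^{m\ell+s}$, and set $\widehat{\mathcal{L}}\triangleq \mathcal{L}\circ\Phi$. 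By construction $\widehat{\mathcal{L}}[x^{m\ell+r}]=\sum_{s}(S^{-1})_{rs}\mu_{m\ell+s}$, so the numbers $\widehat{\mu}_n\triangleq\widehat{\mathcal{L}}[x^n]$ are exactly the entries appearing on the right-hand side of \eqref{eq:def_new_moment}. Thus the theorem is equivalent to the single statement $\widehat{\mathcal{L}}=\widetilde{\mathcal{L}}$, and I would establish this by showing that $(\widetilde{P}_n)_{n=0}^\infty$ is an $\ell$-LBPs with respect to $\widehat{\mathcal{L}}$ together with $\widehat{\mathcal{L}}[1]=1$; the uniqueness in Theorem~\ref{theorem:Favard} then forces $\widehat{\mathcal{L}}=\widetilde{\mathcal{L}}$, that is, $\widehat{\mu}_n=\widetilde{\mu}_n$ for all $n$.

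Two elementary structural facts about $\Phi$ drive everything. First, since $S$ (hence $S^{-1}$) is lower triangular with unit diagonal, the first row of $S^{-1}$ is $(1,0,\ldots,0)$, which gives $\widehat{\mathcal{L}}[1]=\mu_0=1$ at once. Second, $\Phi$ commutes with multiplication by $x^{\ell}$: from the definition $\Phi(x^{\ell}\cdot x^{m\ell+r})=\Phi(x^{(m+1)\ell+r})=x^{\ell}\Phi(x^{m\ell+r})$, and by linearity $\Phi(x^{k\ell}f)=x^{k\ell}\Phi(f)$ for every $k\in\mathbb{Z}$ and every Laurent polynomial $f$.

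The heart of the argument is the claim $\Phi(\widetilde{P}_n)=P_n$ for all $n\ge 0$, which I would prove by induction on $n$. For $0\le n\le\ell-1$ I expand $\widetilde{P}_n=\sum_{r}S_{nr}x^r$, apply $\Phi$, and use $SS^{-1}=I$ together with the primitivity $P_n=x^n$ to obtain $\Phi(\widetilde{P}_n)=\sum_s(SS^{-1})_{ns}x^s=x^n=P_n$. For $n\ge\ell$, both $(\widetilde{P}_n)$ and $(P_n)$ obey the same recurrence \eqref{eq:recurrence} because they are equivalent; applying $\Phi$ to the recurrence for $\widetilde{P}_n$, pulling the factors $x^{\ell}$ out through the commutation property, and substituting the inductive hypotheses $\Phi(\widetilde{P}_{n-i})=P_{n-i}$ for $1\le i\le\ell+1$ reproduces exactly the recurrence defining $P_n$. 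Granting the claim, I then compute, using commutation once more, $\widehat{\mathcal{L}}[\widetilde{P}_n x^{-k\ell}]=\mathcal{L}[\Phi(x^{-k\ell}\widetilde{P}_n)]=\mathcal{L}[x^{-k\ell}\Phi(\widetilde{P}_n)]=\mathcal{L}[P_n x^{-k\ell}]=h_n\delta_{nk}$ for $0\le k\le n$; this is precisely the orthogonality \eqref{eq:ellLBP_orthogonality} for $(\widetilde{P}_n)$ against $\widehat{\mathcal{L}}$, with the same nonzero constants $h_n$.

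I expect the main obstacle to be organizational rather than deep: one must verify that $\Phi$ is well defined on all of $\mathbb{K}[x,x^{-1}]$ (the decomposition $n=m\ell+r$ is valid for negative $n$, so $\Phi$, and hence $\widehat{\mathcal{L}}$, act on all two-sided moments, which is what \eqref{eq:def_new_moment} asserts for every $m\in\mathbb{Z}$), and one must treat the boundary of the induction at $n=\ell$, where the term $\widetilde{P}_{n-\ell-1}=\widetilde{P}_{-1}=0$ appears and matches $P_{-1}=0$. Once the operator $\Phi$ and its commutation with $x^{\ell}$ are in place, every remaining step is a direct substitution, and the uniqueness clause of Theorem~\ref{theorem:Favard} delivers the conclusion.
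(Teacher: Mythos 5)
Your proposal is correct, and it shares the paper's overall skeleton: produce a candidate functional whose moments are the right-hand side of \eqref{eq:def_new_moment}, verify that $(\widetilde{P}_n(x))_{n=0}^\infty$ is an $\ell$-LBPs with respect to it with normalization $1$, and let the uniqueness clause of Theorem \ref{theorem:Favard} finish. But the execution of the key step is genuinely different. The paper works with infinite coefficient matrices: it writes the recurrence as ${\rm BP}\Delta^{\ell}={\rm AP}$, deduces that ${\rm P}^{-1}\widetilde{{\rm P}}$ commutes with $\Delta^{\ell}$, invokes Lemma \ref{lemma:Niimiya} to conclude $\widetilde{{\rm P}}={\rm P}\,{\rm diag}(S,S,\ldots)$, and then obtains orthogonality from the one-line matrix identity $\widetilde{{\rm P}}\widetilde{{\rm L}}={\rm P}{\rm S}{\rm S}^{-1}{\rm L}={\rm H}$. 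Your operator $\Phi$ is precisely right-multiplication by ${\rm diag}(S^{-1},S^{-1},\ldots)$ in disguise---your claim $\Phi(\widetilde{P}_n)=P_n$ is coefficient-wise identical to $\widetilde{{\rm P}}={\rm P}{\rm S}$---but you prove it by induction on the recurrence, using only linearity of $\Phi$ and its commutation with multiplication by $x^{\ell}$. This buys you something: you never need to form ${\rm P}^{-1}$, you bypass Lemma \ref{lemma:Niimiya} entirely, and the whole argument stays inside $\mathbb{K}[x,x^{-1}]$, where the Favard theorem actually lives; the paper's route, in exchange, isolates a reusable structural fact (lower triangular and commuting with $\Delta^{\ell}$ is equivalent to block-diagonal with repeated block). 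Your boundary checks are the right ones: the first row of $S^{-1}$ being $(1,0,\ldots,0)$ gives $\widehat{\mathcal{L}}[1]=\mu_0=1$, and $\widetilde{P}_{-1}=0=P_{-1}$ handles the start of the induction at $n=\ell$; the final computation $\widehat{\mathcal{L}}[\widetilde{P}_n x^{-k\ell}]=\mathcal{L}[P_n x^{-k\ell}]=h_n\delta_{nk}$ is valid because $\Phi(x^{-k\ell}f)=x^{-k\ell}\Phi(f)$, so the same nonzero constants $h_n$ are inherited and the uniqueness clause applies exactly as you say.
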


We prove Theorem \ref{theorem:moment_primitive} after presenting a Lemma.
Let $\Delta\triangleq (\delta_{i+1,j})_{i,j=0}^\infty$ be the infinite sized upper shift matrix.
\begin{lemma}
	\label{lemma:Niimiya}
	Let $L$ be an infinite sized lower triangular matrix.
	The equation
	$L\Delta^{\ell}=\Delta^{\ell}L$ holds if and only if 
	$L$ is a block diagonal matrix of the form $L={\rm diag}(S,S,S,\ldots)$, where
	$S$ is an
	 $\ell\times\ell$ lower triangular matrix.
\end{lemma}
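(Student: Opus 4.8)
The plan is to convert the matrix identity $L\Delta^{\ell}=\Delta^{\ell}L$ into a single family of scalar equations among the entries of $L$, and then read off the block structure from those equations. Write $L=(L_{i,j})_{i,j=0}^{\infty}$ and recall that $(\Delta^{\ell})_{i,j}=\delta_{i+\ell,j}$ (one checks $(\Delta^{2})_{i,j}=\delta_{i+2,j}$ and iterates). A direct computation of the two products then gives $(L\Delta^{\ell})_{i,j}=L_{i,j-\ell}$ and $(\Delta^{\ell}L)_{i,j}=L_{i+\ell,j}$, where throughout I adopt the convention that an entry with a negative column index is $0$. Hence the commutation relation is equivalent to $L_{i+\ell,j}=L_{i,j-\ell}$ for all $i,j\geq 0$, or, after setting $p=i+\ell$ and $q=j$,
\[
L_{p,q}=L_{p-\ell,q-\ell}\qquad (p\geq \ell,\ q\geq 0),
\]
still with the zero convention when $q-\ell<0$.

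From this relation I would extract exactly two consequences. For $q\geq\ell$ it is a genuine shift invariance $L_{p,q}=L_{p-\ell,q-\ell}$, so the entries of $L$ are unchanged under simultaneously decreasing both indices by $\ell$. For $0\leq q<\ell$ the right-hand side vanishes, forcing $L_{p,q}=0$ for every $p\geq\ell$ and $0\leq q\leq\ell-1$. These are the two facts I will combine with the lower-triangularity of $L$.

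Next I would pass to a block decomposition, writing each index as $i=a\ell+r$ with $0\leq r\leq\ell-1$ and denoting by $L^{(a,b)}=(L_{a\ell+r,\,b\ell+s})_{r,s=0}^{\ell-1}$ the corresponding $\ell\times\ell$ block. I would then establish three statements. First, every diagonal block equals $S\triangleq L^{(0,0)}$: applying the shift invariance $a$ times carries an entry of $L^{(a,a)}$ down to the matching entry of $L^{(0,0)}$, and each intermediate step is legitimate because the row and column indices stay $\geq\ell$ until the final step. Second, every strictly-below-diagonal block vanishes: for a block $(a,b)$ with $a>b$ I iterate the shift $b$ times, which brings the column index into $[0,\ell)$ while the row index remains $\geq\ell$, so the entry equals one of the quantities forced to be $0$ above; one checks each step of this iteration keeps the indices in the valid range. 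Third, every strictly-above-diagonal block vanishes by lower-triangularity, since for a block $(a,b)$ with $a<b$ one has $q\geq b\ell\geq(a+1)\ell>p$, i.e.\ $q>p$. Together these give $L=\operatorname{diag}(S,S,S,\ldots)$, and $S$ is lower triangular as a principal submatrix of the lower triangular $L$. The converse is a short block computation: in block form $\Delta^{\ell}$ is the block super-diagonal with $\ell\times\ell$ identity blocks, so for $L=\operatorname{diag}(S,S,\ldots)$ both $(L\Delta^{\ell})^{(a,b)}$ and $(\Delta^{\ell}L)^{(a,b)}$ equal $\delta_{b,a+1}S$.

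The part requiring the most care is the index bookkeeping, and in particular the handling of the negative-index (boundary) convention. Shift invariance alone only yields a \emph{block-Toeplitz} matrix; it is precisely the vanishing coming from the range $q<\ell$ — the boundary of the semi-infinite matrix, equivalently the zero convention in $L\Delta^{\ell}$ — together with lower-triangularity that collapses the Toeplitz structure down to a single repeated diagonal block. I would therefore keep the ranges $q\geq\ell$ and $q<\ell$ strictly separate and verify explicitly that the iterations used in the diagonal and sub-diagonal cases never leave the admissible index range.
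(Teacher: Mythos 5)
Your proof is correct and takes essentially the same approach as the paper: both convert $L\Delta^{\ell}=\Delta^{\ell}L$ into a shift relation on the entries of $L$, and then combine the resulting (block-)Toeplitz structure with the vanishing forced at the leftmost block column (the boundary/zero-index convention) and with lower-triangularity to collapse everything to $\operatorname{diag}(S,S,\ldots)$. The only difference is granularity — you track scalar entries and then reassemble them into $\ell\times\ell$ blocks, while the paper compares the two shifted matrices block-by-block directly — which makes your write-up somewhat more detailed but not a different argument.
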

\begin{proof}
	We demonstrate the `only if' side; the `if' side is obvious.
	Let us express the matrix $L$ 
	as a block matrix 
	\begin{align}
		L=(L_{i,j})_{i,j=0}^\infty=
		\left(
			\begin{array}{ccccc}
				L_{0,0}&O&O&O&\cdots\\
				L_{1,0}&L_{1,1}&O&O&\cdots\\
				L_{2,0}&L_{2,1}&L_{2,2}&O&\cdots\\
				\vdots&\vdots&\vdots&\vdots&
		\end{array}
		\right),
	\end{align}
	where each $L_{i,j}$ is an $\ell\times\ell$ matrix.
	The equation $L\Delta^{\ell}=\Delta^{\ell}L$ can be expressed as 
	\begin{align}
		\left(
			\begin{array}{ccccc}
				O&L_{0,0}&O&O&\cdots\\
				O&L_{1,0}&L_{1,1}&O&\cdots\\
				O&L_{2,0}&L_{2,1}&L_{2,2}&\cdots\\
				\vdots&\vdots&\vdots&\vdots&
		\end{array}
		\right)
		=
		\left(
			\begin{array}{ccccc}
				L_{1,0}&L_{1,1}&O&O&\cdots\\
				L_{2,0}&L_{2,1}&L_{2,2}&O&\cdots\\
				L_{3,0}&L_{3,1}&L_{3,2}&O&\cdots\\
				\vdots&\vdots&\vdots&\vdots&
		\end{array}
		\right).
	\end{align}
	Comparing both sides, we obtain $L_{i,j}=L_{0,0}\delta_{i,j}$.
	This implies that $L$ takes the form $L={\rm diag}(S,S,S,\ldots)$ where $S=L_{0,0}$.
\end{proof}
\begin{proof}[Proof of Theorem \ref{theorem:moment_primitive}]
	Let ${\rm P}$ and $\widetilde{{\rm P}}$ be the coefficient matrices of the polynomials
	$(P_n(x))_{n=0}^\infty$ and $(\widetilde{P}_n(x))_{n=0}^\infty$, i.e., 
	\begin{align}
	{\rm P}\triangleq([x^j]P_i(x))_{i,j=0}^\infty\quad
	\textrm{and}\quad \widetilde{{\rm P}}\triangleq([x^j]\widetilde{P}_j(x))_{i,j=0}^\infty,
\end{align}
	where $[x^i]P_j(x)$ represents the coefficient of $x^i$ in $P_j(x)$.
	Let ${\rm S}\triangleq{\rm diag}(S,S,\ldots)$ be the infinite sized block diagonal matrix with 
	the matrix $S$'s on its diagonal.
	We first show that $\widetilde{{\rm P}}={\rm PS}$.
	
	The recurrence \eqref{eq:recurrence} can be written as a matrix equation 
	\begin{align}
		\label{eq:mateq1}
	{\rm BP}\Delta^{\ell}={\rm AP},
	\end{align}
	where
	${\rm B}$ is an infinite sized lower bidiagonal and ${\rm A}$ is an upper triangular band
	matrix defined as
	\begin{align}
		{\rm B}&\triangleq
		\left(
			\begin{array}{ccccc}
				1&&&&\\
				-a_{\ell+1,1}&1&&&\\
							 &-a_{\ell+1,2}&1&&\\
							 &&-a_{\ell+1,3}&1&\\
							 &&&\ddots&\ddots
			\end{array}
			\right),\\
			{\rm A}&\triangleq
		\left(
			\begin{array}{ccccccccc}
				a_{\ell,0}&a_{\ell-1,0}&a_{\ell-2,0}&\cdots&a_{1,0}&1&&\\
						  &a_{\ell,1}&a_{\ell-1,1}&\cdots&a_{2,1}&a_{1,1}&1&\\
						  &&a_{\ell,2}&\cdots&a_{3,2}&a_{2,2}&a_{1,2}&1\\
						  &&&\ddots&&&&&\ddots
			\end{array}
			\right).
	\end{align}
	The matrix $\widetilde{{\rm P}}$ also satisfies \eqref{eq:mateq1} since it
	satisfies the same recurrence \eqref{eq:recurrence}.
	This implies that ${\rm P}$ and $\widetilde{{\rm P}}$ satisfies 
	${\rm P}\Delta^{\ell}{\rm P}^{-1}=
	\widetilde{{\rm P}}\Delta^{\ell}\widetilde{{\rm P}}^{-1}=
	{\rm B}^{-1}{\rm A}$.
	By transforming the equation, we obtain 
	$\Delta^{\ell}{\rm P}^{-1}\widetilde{{\rm P}}=
	{\rm P}^{-1}\widetilde{{\rm P}}\Delta^{\ell}$.
	As ${\rm P}^{-1}\widetilde{{\rm P}}$ is lower triangular, from Lemma \ref{lemma:Niimiya},
	we have ${\rm P}^{-1}\widetilde{{\rm P}}={\rm diag}(M,M,\ldots)$ for some 
	$\ell\times\ell$ matrix $M$.
	Compareing the first $\ell\times\ell$ entries of the both sides of 
	$\widetilde{{\rm P}}={\rm P}{\rm diag}(M,M,\ldots)$,
	we conclude that $M=S$.

	Next, we demonstrate the statement of the Theorem.
	The orthogonality \eqref{eq:ellLBP_orthogonality} can be expressed by using matrices as
	\begin{align}
		\label{eq:PLH}
	{\rm PL}={\rm H},
\end{align}
where ${\rm L}$ is an infinite sized matrix 
	with moments in its entries 
	and ${\rm H}$ is an 
	infinite sized 
	upper triangular matrix 
	with $h_i\ (i=0,1,2,\ldots)$ on its diagonal,
	as
	\begin{align}
		{\rm L}&\triangleq(\mu_{i-\ell j})_{i,j=0}^{\infty}\\
			&=\left(
				   \begin{array}{cccc}
				\mu_0&\mu_{-\ell}&\mu_{-2\ell}&\cdots\\
				\mu_1&\mu_{-\ell+1}&\mu_{-2\ell+1}&\cdots\\
				\mu_2&\mu_{-\ell+2}&\mu_{-2\ell+2}&\cdots\\
				\vdots&\vdots&\vdots&
			\end{array}
			   \right),\\
		{\rm H}&\triangleq
			   \left(
				   \begin{array}{cccc}
				h_0\ast &\ast&\ast&\cdots\\
				0&h_1&\ast&\cdots\\
				0&0&h_2&\cdots\\
				\vdots&\vdots&\vdots&
			\end{array}
			   \right).
	\end{align}
	Let us define new moments $(\widetilde{\mu}_n)_{n\in\mathbb{Z}}$ as 
	given in
	\eqref{eq:def_new_moment}.
	We define 
	a matrix 
	$\widetilde{{\rm L}}$ as
	$\widetilde{{\rm L}}\triangleq(\widetilde{\mu}_{i-\ell j})_{i,j=0}^\infty$,
	which satisfies
	$\widetilde{{\rm L}}={\rm S}^{-1}{\rm L}={\rm diag}(S^{-1},S^{-1},\ldots,){\rm L}$. 
	Additionally, 
	the matrices $\widetilde{{\rm P}}$ and $\widetilde{{\rm L}}$
	satisfies 
		$
		\widetilde{{\rm P}}\widetilde{{\rm L}}
		={\rm PS}{\rm S}^{-1}{\rm L}
		={\rm PL}
		={\rm H}$.
	According to Theorem \ref{theorem:Favard}, 
	it follows that 
	$(\widetilde{\mu}_n)_{n\in\mathbb{Z}}$ is the moments associated with  $(\widetilde{P}_n(x))_{n=0}^\infty$.
\end{proof}

As an additional note,
we point out a
connection between $\ell$-LBPs and 
\emph{vector orthogonal polynomials of dimension $-d$} defined by Brezinski and Van Iseghem \cite{BV}.
It is shown in \cite{WCY} that $\ell$-LBPs admit a determinant expression
\begin{align}
	P_n(x)=\frac{1}{\tau_{n}}\left|
	\begin{array}{cccccc}
		\mu_0&\mu_{-\ell}&\mu_{-2\ell}&\cdots&\mu_{-(n-1)\ell}&1\\
		\mu_1&\mu_{-\ell+1}&\mu_{-2\ell+1}&\cdots&\mu_{-(n-1)\ell+1}&x\\
		\mu_2&\mu_{-\ell+2}&\mu_{-2\ell+2}&\cdots&\mu_{-(n-1)\ell+2}&x^2\\
		\vdots&\vdots&\vdots&&\vdots&\vdots\\
		\mu_n&\mu_{-\ell+n}&\mu_{-2\ell+n}&\cdots&\mu_{-(n-1)\ell+n}&x^n
	\end{array}
	\right|,
\end{align}
where $\mu_{n}\ (n\in\mathbb{Z})$ are the moments and 
$\tau_n\triangleq \det(\mu_{i-\ell j})_{i,j=0}^{n-1}$ is a 
block Toeplitz determinant of the moments.
The polynomials $Q_n(x)$ defined as
\begin{align}
	Q_n(x)\triangleq \frac{1}{\tau_{n}}\left|
	\begin{array}{ccccc}
		\mu_0&\mu_{-\ell}&\mu_{-2\ell}&\cdots&\mu_{-n\ell}\\
		\mu_1&\mu_{-\ell+1}&\mu_{-2\ell+1}&\cdots&\mu_{-n\ell+1}\\
		\mu_2&\mu_{-\ell+2}&\mu_{-2\ell+2}&\cdots&\mu_{-n\ell+2}\\
		\vdots&\vdots&\vdots&&\vdots\\
		\mu_{n-1}&\mu_{-\ell+n-1}&\mu_{-2\ell+n-1}&\cdots&\mu_{-n\ell+n-1}\\
		1&x&x^2&\cdots&x^n
	\end{array}
	\right|
\end{align}
is called the bi-orthogonal partner of $P_n(x)$ since they satisfy the bi-orthogonality
\begin{align}
	\mathcal{L}[P_n(x)Q_m(x)]=h_{n,m}\delta_{n,m},\qquad 
	n,m\in\mathbb{N}_0.
\end{align}
for non-vanishing $h_{n,m}$.
This is why $P_n(x)$ is called bi-orthogonal polynomials.
The polynomial $Q_n(x)$ is essentially the same as the polynomial 
introduced as a vector orthogonal polynomial of dimension $-d$ in \cite{BV}.

\section{Interpretation of $\ell$-LBPs using $\ell$-Schr\"oder paths}
\label{section:3}

This chapter is dedicated to interpreting the moments of $\ell$-LBPs
as generating functions of
$\ell$-Schr\"oder paths and another type of paths called
\emph{dual $\ell$-Schr\"oder paths}.
We focus specifically on primitive cases, 
as Theorem \ref{theorem:moment_primitive} tells that the moments of 
general cases can be expressed in terms of those of the primitive ones.

The moments $\mu_n=\mathcal{L}[x^n]$ of primitive $\ell$-LBPs, as defined by Theorem 
\ref{theorem:Favard}, are Laurent polynomials of 
the coefficients of the recurrence \eqref{eq:recurrence}.
For example, when $\ell=3$, the first few moments of non-negative degrees are
\begin{align}
	&\mu_0=1,\ \mu_1=\mu_2=0,\ \mu_3=a_{3,0},\ \mu_{4}=a_{3,0}a_{4,1},
	\ \mu_{5}=a_{3,0}a_{4,2}a_{4,1},\\
	&\mu_{6}=a_{3,0}(a_{1,0}a_{4,1}a_{4,2}+a_{4,1}a_{4,2}a_{4,3}+a_{2,0}a_{4,1}+a_{3,0}),\ldots
\end{align}
and those of negative degrees are
\begin{align}
	&\mu_{-1}=0,\ 
	\mu_{-2}=-\frac{a_{4,1}}{a_{3,1}},\ 
	\mu_{-3}=\frac{a_{2,0}a_{4,1}}{a_{3,1}a_{3,0}}+\frac{1}{a_{3,0}},\ 
	\mu_{-4}=\frac{a_{4,1}a_{4,2}}{a_{3,1}a_{3,2}}\ \\
	&\mu_{-5}=
	-\frac{a_{4,1}^{2} a_{2,0}}{a_{3,1}^{2} a_{3,0}}
	-\frac{a_{4,1} a_{2,1} a_{4,2}}{a_{3,1}^{2} a_{3,2}}
	-\frac{a_{4,1}}{a_{3,1}^{2}}
	-\frac{a_{4,1}}{a_{3,1} a_{3,0}},\ldots.
\end{align}
Henceforth, all the examples in this paper will be mentioned for the case where $\ell=3$.
In Theorem \ref{theorem:main1} in Section \ref{section:3.3},
this moments will be interpreted as generating functions.

Let us define some terminologies for lattice paths.
A \emph{step} is a vector of integers in the $x$-$y$ plane.
Let $S$ be a finite set of steps.
A \emph{path} 
is a word 
over $S$ 
equipped with an \emph{initial point}, a point in $\mathbb{Z}^2$.
We say a path is from its initial point.
Let $\eta=\eta_1\cdots\eta_r$, where $\eta_i\in S$, be a path from $(s,t)$.
The \emph{starting point} of the $i$-th step, denoted as ${\rm sp}_i$, is defined by
${\rm sp}_i\triangleq (s,t)+\sum_{j=1}^{i-1}\eta_j$ for $i=1,\ldots,r$.
The \emph{ending point} of the $i$-th step is defined by 
${\rm sp}_i+\eta_i$ for $i=1,\ldots,r$.
The \emph{final point} of a path is the ending point of its last step.
We say that a path goes to its final point.
A path whose word is empty is called the \emph{empty path} and is denoted by $\varepsilon$.

A weight of steps is a function $w\colon S\times\mathbb{Z}^2\to\mathbb{K}$.
Given a weight of steps, the weight of a path $\eta=\eta_1\cdots\eta_r$ is defined 
by the product of the weight of each step
as
$w(\eta)\triangleq\prod_{i=1}^{r} w(\eta_i,{\rm sp}_i)$.
The weight of the empty path is $1$.

For any paths $\eta=\eta_1\cdots\eta_r$ and $\nu=\nu_1\cdots\nu_c$, we define their concatenation as
$\eta\nu\triangleq \eta_1\cdots\eta_r\nu_1\cdots\nu_c$, with
the initial point being the same as that of $\eta$. 
We also define the concatenation of a path and a step as follows.
Let $\eta$ be a path and $x$ be a step.
Then define $\eta x\triangleq \eta_1\cdots\eta_rx$ and $x\eta\triangleq x\eta_1\cdots\eta_r$.
The initial points of $x\eta$ and $\eta x$ are the same as that of $\eta$.

For a set $A$ of paths and for $\alpha$ a path or a step, we define their concatenation as
$\alpha A\triangleq \bigcup_{\eta\in A}\alpha\eta$ and 
$A\alpha\triangleq \bigcup_{\eta\in A}\eta\alpha$.

\subsection{Combinatorial interpretation of the recurrence of $\ell$-LBPs}
A \emph{pre-Favard path} 
is a path from $(0,-\ell+1)$ with the following $2\ell+1$ types of steps: 
\begin{align}
	{\rm step}&\ \chi_i\triangleq (i,i),\qquad \text{for}\ i=1,\ldots,\ell,\\
	{\rm step}&\ \alpha_i\triangleq(0,i),\qquad \text{for}\ i=1,\ldots,\ell,\\
	{\rm step}&\ \alpha_{\ell+1}\triangleq(\ell,\ell+1).
\end{align}
We define the weight of the steps $\chi_i$ as $1$, and  
the weight of $\alpha_i$ as
\begin{align}
	w(\alpha_i,(s,t))\triangleq -a_{i,t+i-1},\qquad \forall(s,t)\in\mathbb{Z},\ i=1,\ldots,\ell+1,
\end{align}
depending on the height where they are placed.

A \emph{Favard path} is a pre-Favard path that satisfies the following two conditions.
\begin{itemize}
	\item
		If a step $\chi_i\ (i=1,\ldots,\ell-1)$ exists in a path, 
		it must be placed at the beginning of the path.
	\item
		The weight of a path does not depend on any of 
		$a_{i,j}\ (j<0)$.
\end{itemize}
Note that, from the first condition, a Favard path can contain at most one copy of $\chi_i\ (i=1,\ldots,\ell-1)$.
Let the \emph{width} $\w(\eta)$ and the \emph{height} of a path $\eta=\eta_1\cdots\eta_r$ 
be the $x$-component and the $y$-component of the vector $\eta_1+\cdots+\eta_r$.
Let $\mathcal{F}_n$ be the set of Favard path of height $n$.
\begin{figure}[htbp]
  \begin{minipage}[b]{0.48\linewidth}
    \centering
	\includegraphics{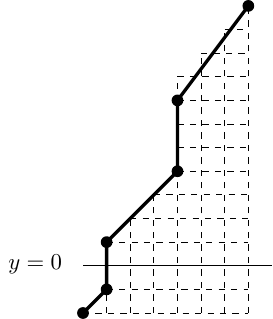}
  \end{minipage}
  \begin{minipage}[b]{0.48\linewidth}
    \centering
	\includegraphics{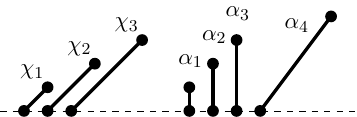}
  \end{minipage}
    \caption{
		Left: a Favard path $\eta$, whose
		steps are $\eta=\chi_1\alpha_2\chi_3\alpha_3\alpha_4$,
		weight $w(\eta)=-a_{2,0}a_{3,6}a_{4,10}$,
		width $\w(\eta)=7$ and height $13$.
	Right: the seven types of steps of Favard path.}
  \label{fig:Favard}
\end{figure}
Figure \ref{fig:Favard} shows an example of Favard path and the available steps for when $\ell=3$. 

The following theorem shows that primitive $\ell$-LBPs are generating functions of Favard paths.
\begin{theorem}
	\label{theorem:P_combin}
	Let $(P_n(x))_{n=0}^\infty$ be defined by the recurrence \eqref{eq:recurrence} with an initial condition
	$P_n(x)=x^n\ (n=0,\ldots,\ell-1)$.
	For any non-negative integer $n\in\mathbb{N}_0$, we have 
 	\begin{align}
		\label{eq:path_combinat}
		\sum_{\eta\in\mathcal{F}_n}w(\eta)x^{\w(\eta)} =P_n(x).
	\end{align}
\end{theorem}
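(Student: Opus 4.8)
The plan is to prove by strong induction on $n$ that the path generating function $G_n(x):=\sum_{\eta\in\mathcal{F}_n}w(\eta)x^{\w(\eta)}$ satisfies the same first $\ell$ initial values and the same $(\ell+2)$-term recurrence \eqref{eq:recurrence} as $P_n(x)$. Since \eqref{eq:recurrence} expresses $P_n$ for $n\ge\ell$ in terms of strictly lower-index members, a sequence obeying it is completely determined by its first $\ell$ entries; hence agreement on $0\le n\le\ell-1$ together with the common recurrence forces $G_n=P_n$ for all $n$. The computation that makes everything fit is the following: every Favard path starts at $(0,-\ell+1)$, so a path of height $m$ ends at $y=-\ell+1+m$; therefore a last step $\alpha_i$ capping a height-$n$ path begins at height $t=-\ell+1+(n-i)$ and has weight $-a_{i,t+i-1}=-a_{i,n-\ell}$, exactly the subscript occurring in \eqref{eq:recurrence}.

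For the base cases $0\le n\le\ell-1$ I would classify $\mathcal{F}_n$ directly. The empty path is the unique height-$0$ path, so $G_0=1=x^0$. For $1\le n\le\ell-1$, both $\chi_\ell$ and $\alpha_{\ell+1}$ raise the height by at least $\ell>n$ and so cannot appear; and any $\alpha_j$ step used while the path is still near the floor $y=-\ell+1$ would carry weight $-a_{j,t+j-1}$ with $t+j-1<0$, violating the second Favard condition. A short case analysis (splitting on whether the path begins with some $\chi_i$, $i<\ell$) then shows the only admissible path of height $n$ is the single step $\chi_n$, of weight $1$ and width $n$, whence $G_n(x)=x^n=P_n(x)$.

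The heart of the argument is the inductive step $n\ge\ell$, carried out by decomposing each Favard path according to its last step. For $n\ge\ell$ the last step cannot be any $\chi_i$ with $i<\ell$: by the first Favard condition such a step must sit at the very beginning, so being also last would make the whole path equal to $\chi_i$ of height $i<\ell\le n$, a contradiction. Hence the last step is one of $\alpha_1,\dots,\alpha_\ell$, $\chi_\ell$, or $\alpha_{\ell+1}$, and deleting it produces a path of height $n-i$, $n-\ell$, or $n-\ell-1$ respectively. Granting that this deletion is a bijection onto $\mathcal{F}_{n-i}$, $\mathcal{F}_{n-\ell}$, $\mathcal{F}_{n-\ell-1}$, and reading off the weight and width contributed by each last step ($\alpha_i$ gives factor $-a_{i,n-\ell}$ and width $0$, $\chi_\ell$ gives $1$ and width $\ell$, and $\alpha_{\ell+1}$ gives $-a_{\ell+1,n-\ell}$ and width $\ell$), one obtains
\[
G_n(x)=x^\ell G_{n-\ell}(x)-\sum_{i=1}^{\ell}a_{i,n-\ell}G_{n-i}(x)-a_{\ell+1,n-\ell}\,x^{\ell}G_{n-\ell-1}(x),
\]
which is precisely \eqref{eq:recurrence}. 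When $n=\ell$ the final term vanishes since $\mathcal{F}_{-1}=\varnothing$, matching $P_{-1}=0$.

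The main obstacle I anticipate is verifying that the last-step deletion is genuinely a bijection, i.e. that both Favard conditions are preserved in each direction. Deletion is the easy direction: removing the final step cannot move a $\chi_i$ ($i<\ell$) out of initial position, and dropping a factor from a weight cannot create a forbidden negative index, so deletion maps $\mathcal{F}_n$ into the smaller sets. The delicate direction is re-appending: attaching $\alpha_i$ or $\alpha_{\ell+1}$ to a path of the matching height must not produce a weight depending on some $a_{\cdot,j}$ with $j<0$. This is exactly where the weight computation of the first paragraph is used, and where the hypothesis $n\ge\ell$ enters: the newly created index is precisely $n-\ell\ge0$, so no negative subscript is ever introduced, and the appended step (being an $\alpha$, never a $\chi_i$) leaves the first condition intact. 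This single fact is what makes the recurrence valid in exactly the stated range.
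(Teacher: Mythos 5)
Your proof is correct and takes essentially the same approach as the paper: the paper's proof likewise notes $\mathcal{F}_n=\{\chi_n\}$ for $1\leq n\leq\ell-1$ and then decomposes, for $n\geq\ell$, the set $\mathcal{F}_n$ by its last step as the disjoint union $\left(\bigsqcup_{i=1}^{\ell}\mathcal{F}_{n-i}\alpha_i\right)\sqcup\mathcal{F}_{n-\ell}\chi_\ell\sqcup\mathcal{F}_{n-\ell-1}\alpha_{\ell+1}$, which is exactly your last-step deletion bijection yielding the recurrence \eqref{eq:recurrence}. Your extra verifications (the index computation $t+i-1=n-\ell$ and the preservation of the two Favard conditions under deletion and re-appending) are details the paper leaves implicit, so they strengthen rather than change the argument.
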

\begin{proof}
	The cases $n\leq \ell-1$ are clear since $\mathcal{F}_n=\{\chi_n\}$ for $1\leq n\leq \ell-1$.
	When $n\geq \ell$,
	the set $\mathcal{F}_n$ is decomposed into a disjoint union of $\ell+2$ sets 
	as 
	\begin{align}
		\mathcal{F}_n=\left(\bigsqcup_{i=1}^\ell \mathcal{F}_{n-i}\alpha_{i}\right)
		\sqcup\mathcal{F}_{n-\ell}\chi_{\ell}
		\sqcup\mathcal{F}_{n-\ell-1}\alpha_{\ell+1}.
	\end{align}
	This shows that the polynomials $\sum_{\eta\in\mathcal{F}_n}w(\eta)x^{\w(\eta)}$ and $P_n(x)$ satisfies
	the same recurrence and the initial condition.
\end{proof}

\subsection{$\ell$-Schr\"oder paths and dual $\ell$-Schr\"oder paths}
We have defined $\ell$-Schr\"oder paths in Definition \ref{definition:1}.
Let us define the other path called dual $\ell$-Schr\"oder paths
having steps $\mathsf{A}_i\ (i=0,\ldots,\ell+1)$ as
\begin{align}
	\mathsf{A}_i&\triangleq\left\{
		\begin{array}{ll}
			(\ell-i,\ell-i),&{\rm for}\ i=0,\ldots,\ell-1,\\
			(\ell,0),&{\rm for}\ i=\ell,\\
			(\ell-1,-1),&{\rm for}\ i=\ell+1.
	\end{array}\right.
\end{align}
A  dual $\ell$-Schr\"oder path
is a path never going below the $x$-axis 
with the steps $\{\mathsf{A}_i\mid i=0,\ldots,\ell+1\}$.
These paths also essentially coincide with Schr\"oder paths when $\ell=1$.
\begin{figure}[htbp]
  \begin{minipage}[b]{0.74\linewidth}
    \centering
	\includegraphics{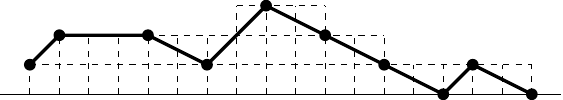}
  \end{minipage}
  \begin{minipage}[b]{0.25\linewidth}
    \centering
	\includegraphics{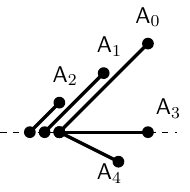}
  \end{minipage}
    \caption{
		Left: an example of dual $3$-Schr\"oder path.
	Right: the five types of steps of $3$-Schr\"oder path. }
  \label{fig:dualSchroder}
\end{figure}
Figure \ref{fig:dualSchroder} shows an example of dual $\ell$-Schr\"oder path 
and availavel steps when $\ell=3$.

Let us 
define a weight of the step $\mathsf{a}_0$ of $\ell$-Schr\"oder paths to be $1$ and
the weight of the step $\mathsf{a}_i\ (i=1,\ldots,\ell+1)$ as
\begin{align}
	w(\mathsf{a}_i,(s,t))\triangleq a_{i,t},\qquad\forall(s,t)\in\mathbb{Z}^2,\ i=1,\ldots,\ell+1.
\end{align}
Also let the weight of step $\mathsf{A}_i\ (i=0,\ldots,\ell+1)$ of dual $\ell$-Schr\"oder paths be 
\begin{align}
	w(\mathsf{A}_i,(s,t))\triangleq\left\{
		\begin{array}{ll}
			\vspace{3pt}
			\displaystyle\frac{(-1)^{\ell+1}}{a_{\ell,t}},&\textrm{if } i=0,\\
			\vspace{3pt}
			\displaystyle\frac{(-1)^{\ell+1+i}a_{i,t}}{a_{\ell,t}},&\textrm{if }
			1\leq i\leq \ell-1\ \textrm{or } i=\ell+1,\\
			\displaystyle\frac{1}{a_{\ell,t}},&\textrm{if }i=\ell.
		\end{array}
		\right.
\end{align}
For example, 
the weight of $\ell$-Schr\"oder path $\omega$ in Figure \ref{fig:Schroder} is
\begin{align}
w(\omega)=a_{1,1}^2a_{2,0}a_{3,1}a_{4,1}a_{4,2}^2a_{4,3}^3a_{4,4}a_{4,5},
\end{align}
and the weight of dual $\ell$-Schr\"oder path $\widetilde{\omega}$ in Figure \ref{fig:dualSchroder} is 
\begin{align}
	w(\widetilde{\omega})=-\frac{a_{2,1} a_{4,2}^{2} a_{1,1} a_{4,3} a_{4,1}^{2} a_{2,0}}
		{a_{3,1}^{4} a_{3,2}^{3} a_{3,3} a_{3,0}}.
	\end{align}
Note that when $\ell=3$,
the weights of the steps 
$\mathsf{A}_0,\mathsf{A}_1,\mathsf{A}_2,\mathsf{A}_3,\textrm{and }\mathsf{A}_4$
are
\begin{align}
\frac{1}{a_{3,t}},-\frac{a_{1,t}}{a_{3,t}},\frac{a_{2,t}}{a_{3,t}},\frac{1}{a_{3,t}},
\textrm{and }\frac{a_{4,t}}{a_{3,t}},
\end{align}
respectively, if their starting points are on the line $y=t$.

Let $r$ be the remainder of dividing $n$ by $\ell$.
Let $\mathcal{B}_n$ denote the set of $\ell$-Schr\"oder
paths from $(0,r)$ to $(n,0)$
and let
$\widetilde{\mathcal{B}}_n$ the set of dual $\ell$-Schr\"oder paths
from $(0,\ell-r)$ to $(n,0)$.
The example in Figure \ref{fig:Schroder} is in $\mathcal{B}_{16}$, and 
the example in Figure \ref{fig:dualSchroder} is in $\widetilde{\mathcal{B}}_{17}$.
We denote the weighted generating functions of the sets $\mathcal{B}_n$ and $\widetilde{\mathcal{B}}_n$ as
\begin{align}
	w(\mathcal{B}_n)\triangleq\sum_{\omega\in\mathcal{B}_n}w(\omega),\qquad{\rm and}\qquad
	w(\widetilde{\mathcal{B}}_n)\triangleq\sum_{\omega\in\widetilde{\mathcal{B}}_n}w(\omega).
\end{align}
\begin{remark}
A $\ell$-Schr\"oder path from $(s_1,s_2)$ to $(t_1,t_2)$ exists only when
$s_1+s_2\equiv t_1+t_2\pmod \ell$ and a 
dual $\ell$-Schr\"oder path only when
$s_1-s_2\equiv t_1-t_2\pmod\ell$.
\end{remark}
\begin{remark}
	Note that some sequences in the on-line encyclopedia of integer sequences \cite{OEIS} can be interpreted by
	$\mathcal{B}_n$ and $\widetilde{\mathcal{B}}_n$. For instance, when we set $\ell=2$,
	$(|\mathcal{B}_{n}|)_{n=0,1,2,\ldots}$ is coincident with the sequence A107708 and
	$(|\widetilde{\mathcal{B}}_n|)_{n=0,1,2,\ldots}$ with
	A007863.
\end{remark}

\subsection{Combinatorial interpretation of the moment of $\ell$-LBPs}
\label{section:3.3}
The following is the main theorem of the first half of this paper, providing
an explicit expression for the moments 
of primitive $\ell$-LBPs.
Let us denote
the remainder of dividing $p$ by $q$
as $(p\bmod q)$ for 
$p\in\mathbb{Z}$ and $q\in\mathbb{N}$.
\begin{theorem}
	\label{theorem:main1}
	Let $(P_n(x))_{n=0}^\infty$ be defined by the recurrence \eqref{eq:recurrence} with an initial condition
	$P_n(x)=x^n\ (n=0,\ldots,\ell-1)$.
	Let $\mathcal{L}\colon\mathbb{K}[x,x^{-1}]\to\mathbb{K}$ 
	be the unique
	linear functional such that $(P_n(x))_{n=0}^\infty$ is an
	$\ell$-LBPs with respect to $\mathcal{L}$, and that satisfies $\mathcal{L}[1]=1$.
	Then the moments $\mu_n\triangleq\mathcal{L}[x^n]$ are written explicitly as 
\begin{align}
	\label{eq:def_comb_moment}
	\mu_n=\left\{
		\begin{array}{ll}
			a_{\ell,0}w(\mathcal{B}_{n-\ell}),&\textrm{if }\ell\leq n\\
			0,&\textrm{if }1\leq n\leq \ell-1,\\
			(-1)^{n\bmod \ell}w(\widetilde{B}_{-n}),&\textrm{if }n\leq 0.
		\end{array}
		\right.
\end{align}
\end{theorem}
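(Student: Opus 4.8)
The plan is to reduce everything to the uniqueness built into Theorem \ref{theorem:Favard} and then verify the defining relations of the moments combinatorially. Write $\widehat{\mu}_n$ for the right-hand side of \eqref{eq:def_comb_moment}, the candidate moments. By construction (see the proof of Theorem \ref{theorem:Favard}), the genuine moments are the \emph{unique} sequence with $\widehat{\mu}_0=1$ satisfying the positive-side relations $\mathcal{L}[P_n(x)]=0$ of \eqref{eq:Favard_positive} for $n\ge 1$ and the negative-side relations $\mathcal{L}[P_{n+i}(x)x^{-n\ell}]=0$ of \eqref{eq:Favard_negative} for $i=1,\dots,\ell$ and $n\ge 1$. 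Hence it suffices to check that $(\widehat{\mu}_n)_{n\in\mathbb{Z}}$ satisfies these relations; the normalization $\widehat{\mu}_0=1$ I would dispose of first, since $\widetilde{\mathcal{B}}_0$ consists only of the empty path.

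The engine of the verification is the Favard-path expansion. By Theorem \ref{theorem:P_combin} and linearity, inserting the candidate moments gives $\mathcal{L}[P_N(x)x^{-k\ell}]=\sum_{\eta\in\mathcal{F}_N}w(\eta)\,\widehat{\mu}_{\w(\eta)-k\ell}$, and substituting \eqref{eq:def_comb_moment} turns each summand into the weight of a Favard path $\eta$ times the weighted generating function of an $\ell$-Schr\"oder path (when $\w(\eta)-k\ell\ge\ell$) or a dual $\ell$-Schr\"oder path (when $\w(\eta)-k\ell\le 0$). The decisive point is that these two pieces glue: appending the (reversed) Schr\"oder path to $\eta$ yields a single lattice configuration, and the height conventions are arranged so that a Favard step $\alpha_i$, of weight $-a_{i,t+i-1}$, and an $\ell$-Schr\"oder step $\mathsf{a}_i$, of weight $a_{i,t}$, carry the same coefficient up to sign when placed at corresponding heights. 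I would therefore construct a sign-reversing, weight-preserving involution that reassigns the topmost transferable step between the Favard part and the Schr\"oder part: each such move flips exactly one factor $-a_{i,\cdot}\leftrightarrow a_{i,\cdot}$, so the two matched configurations cancel. For $n\ge 1$ the involution is fixed-point free, which forces the vanishing, while the single unmatched configuration at $n=0$ reproduces $\widehat{\mu}_0=1$. The boundary steps $\chi_\ell,\alpha_{\ell+1}$ on the Favard side and $\mathsf{a}_\ell,\mathsf{a}_{\ell+1}$ on the Schr\"oder side, together with the prefactor $a_{\ell,0}$, must be tracked separately; these are exactly what produce the factor $a_{\ell,0}$ in the case $\ell\le n$ and the vanishing in the case $1\le n\le\ell-1$.

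For the negative-side relations the same strategy applies with dual $\ell$-Schr\"oder paths in place of $\ell$-Schr\"oder paths. Here the intricate step weights $w(\mathsf{A}_i,(s,t))=(-1)^{\ell+1+i}a_{i,t}/a_{\ell,t}$ (and the companion values for $i=0,\ell$) are precisely what is needed so that a transferred step again flips a single sign while preserving the total weight, with the overall factor $(-1)^{n\bmod\ell}$ absorbing the accumulated signs of the $\ell+2$ step types. The main obstacle I anticipate is exactly this bookkeeping on the dual side: verifying that the $1/a_{\ell,t}$ denominators and the $(-1)^{\ell+1+i}$ signs align with the Favard weights $-a_{i,t+i-1}$ after gluing and reindexing, and checking that the support condition in the second bullet of the Favard-path definition (no dependence on $a_{i,j}$ with $j<0$) corresponds under the gluing to the dual paths remaining weakly above the $x$-axis. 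Once the involution and its fixed-point analysis are in place in both regimes, the uniqueness in Theorem \ref{theorem:Favard} identifies $\widehat{\mu}_n$ with $\mu_n$, completing the proof.
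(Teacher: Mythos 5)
Your proposal is correct and is essentially the paper's own argument: expand $P_n(x)$ over Favard paths via Theorem \ref{theorem:P_combin}, cancel the glued (Favard path, (dual) $\ell$-Schr\"oder path) configurations by sign-reversing step-transfer involutions exactly as in Lemmas \ref{lemma:positive} and \ref{lemma:negative} (including the boundary bookkeeping you flag, which the paper handles through the exceptional sets $J_1,J_2,\widetilde{J}_1,\widetilde{J}_2$ and the $\chi_\ell\leftrightarrow\alpha_\ell$ first-step swap), and identify the candidate moments with $\mu_n$ by uniqueness coming from Theorem \ref{theorem:Favard}. The only deviation is economical rather than conceptual: you verify just the defining system \eqref{eq:Favard_positive}--\eqref{eq:Favard_negative} (all ``$=0$'' relations, so no diagonal computation is needed) and appeal to the uniqueness of its solution established inside the proof of Theorem \ref{theorem:Favard}, whereas the paper proves the full orthogonality for all $0\le k\le n$ (Theorem \ref{theorem:g_orth}), nonzero diagonal included, and then invokes the uniqueness of the functional itself.
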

The remaining of this section is devoted to the proof of Theorem \ref{theorem:main1}.
We show the orthogonality $\mathcal{L}[P_n(x)x^{-k\ell}]=h_n\delta_{n,k}$
for the functional $\mathcal{L}$ defined by the moments \eqref{eq:def_comb_moment}.
Then, the claim of Theorem \ref{theorem:main1} follows 
from the uniqueness of the functional.
We show the orthogonality by 
decomposing it into
two lemmas: one is related to the moments of positive degrees (Lemma \ref{lemma:positive}), 
and the other is related to non-positive degrees (Lemma \ref{lemma:negative}).
After two lemmas, we present the orthogonality in a general form (Theorem \ref{theorem:g_orth}).
\begin{remark}
	Theorem \ref{theorem:main1} generalizes Kamioka's combinatorial 
	interpretation for the moments of $1$-LBPs \cite[Theorem 3.1]{K2}.
\end{remark}
\begin{lemma}
	\label{lemma:positive}
	Let $\mathcal{L}$ be the linear functional defined by the moments \eqref{eq:def_comb_moment}.
	For any non-negative integer $n\in\mathbb{N}_0$
	and integer $k\in\mathbb{Z}$, we have
\begin{subequations}  \label{eq:positive_lemma}
    \begin{empheq}[left = {
		\mathcal{L}\left[[x^{>0}]P_n(x)x^{-\ell k}\right]=
	\empheqlbrace \,}]{alignat = 2}
        & 
		a_{\ell,0}
		\sum_{\substack{\eta\in\mathcal{F}_n,\\
		\eta \text{\rm\ starts with }\chi_\ell,\\ 
		\w(\eta)=\ell(k+1)}}
		w(\eta),
		&\qquad&   \text{if $k\geq 0$,}  \label{eq:positive_case1}  \\
        & 
	a_{\ell,0}
	\sum_{\substack{\omega\colon\ell\textrm{\rm -Schr\"oder}\\
\text{\rm from }(0,n)\\
\text{\rm to }(n-\ell(k+1),0)}}w(\omega),
		&     &   \text{if $k<0$},  \label{eq:positive_case2}
    \end{empheq}
\end{subequations}
	where $[x^{>0}]P_n(x)x^{-\ell k}$ denotes the sum of terms of degree one or heiger in the 
	Laurent polynomial $P_n(x)x^{-\ell k}$.
	The sum \eqref{eq:positive_case1} on the right-hand side is taken over all Favard paths in $\mathcal{F}_n$
	whose first steps are $\chi_\ell$
	and whose widths are $\ell(k+1)$.
	The sum \eqref{eq:positive_case2} is taken over all $\ell$-Schr\"oder paths 
	from $(0,n)$ to $(n-\ell(k+1),0)$.
\end{lemma}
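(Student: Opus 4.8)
The plan is to feed the path expansion of $P_n(x)$ from Theorem \ref{theorem:P_combin} into the defining moments \eqref{eq:def_comb_moment} and then reduce the statement to a single recurrence. Substituting $P_n(x)=\sum_{\eta\in\mathcal{F}_n}w(\eta)x^{\w(\eta)}$ gives $[x^{>0}]P_n(x)x^{-\ell k}=\sum_{\eta\in\mathcal{F}_n,\ \w(\eta)>\ell k}w(\eta)x^{\w(\eta)-\ell k}$. Applying $\mathcal{L}$ term by term and using \eqref{eq:def_comb_moment}, the vanishing $\mu_j=0$ for $1\le j\le\ell-1$ discards every Favard path with $\ell k<\w(\eta)<\ell(k+1)$, while each survivor contributes $\mu_{\w(\eta)-\ell k}=a_{\ell,0}\,w(\mathcal{B}_{\w(\eta)-\ell(k+1)})$. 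Hence
\[
\mathcal{L}\left[[x^{>0}]P_n(x)x^{-\ell k}\right]=a_{\ell,0}\,F(n,k),\qquad
F(n,k):=\sum_{\substack{\eta\in\mathcal{F}_n\\ \w(\eta)\ge\ell(k+1)}}w(\eta)\,w(\mathcal{B}_{\w(\eta)-\ell(k+1)}),
\]
uniformly in $k$ (for $k<0$ the inequality $\w(\eta)\ge\ell(k+1)$ is vacuous). It remains to match $F(n,k)$ with the two right-hand sides of \eqref{eq:positive_lemma} divided by $a_{\ell,0}$.

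I would prove this by induction on $n$ using the last-step decomposition $\mathcal{F}_n=\bigl(\bigsqcup_{i=1}^\ell\mathcal{F}_{n-i}\alpha_i\bigr)\sqcup\mathcal{F}_{n-\ell}\chi_\ell\sqcup\mathcal{F}_{n-\ell-1}\alpha_{\ell+1}$ from Theorem \ref{theorem:P_combin}. Removing the last step multiplies the weight by $-a_{i,n-\ell}$ for $\alpha_i$ and by $-a_{\ell+1,n-\ell}$ for $\alpha_{\ell+1}$ (the starting height always forces the second index to be $n-\ell$), leaves the width unchanged for an $\alpha_i$, and decreases it by $\ell$ for $\chi_\ell$ and $\alpha_{\ell+1}$. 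Tracking the effect on the constraint $\w\ge\ell(k+1)$ and on the Schröder index $\w-\ell(k+1)$ shows that a $\chi_\ell$ or $\alpha_{\ell+1}$ at the end forces the shift $k\mapsto k-1$, and one obtains
\[
F(n,k)=-\sum_{i=1}^{\ell}a_{i,n-\ell}F(n-i,k)+F(n-\ell,k-1)-a_{\ell+1,n-\ell}F(n-\ell-1,k-1),\qquad n\ge\ell.
\]
For $n<\ell$ one has $\mathcal{F}_n=\{\chi_n\}$, which pins down the base values of $F$.

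Writing $G(n,k)$ for the right-hand side of \eqref{eq:positive_lemma} divided by $a_{\ell,0}$, the goal becomes $F=G$, which I would get by showing $G$ satisfies the same recurrence and base values. When $k\ge 1$ the family of Favard paths that start with $\chi_\ell$ and have width $\ell(k+1)$ is itself stable under deleting the last step, so the identical bookkeeping gives the recurrence for $G$, with the shift $k\mapsto k-1$ occurring exactly when the deleted step is $\chi_\ell$ or $\alpha_{\ell+1}$; the base cases $n<\ell$ agree with the small-$\mathcal{B}_n$ computation. The two delicate regimes are the boundary $k=0$, where $G(n,0)$ is of Favard type but the parents $G(n-\ell,-1)$, $G(n-\ell-1,-1)$ are of $\ell$-Schröder type, and the range $k\le-1$, where $G(n,k)=\phi_n(N)$ with $\phi_t(M):=w(\{\ell\text{-Schr\"oder paths }(0,t)\to(M,0)\})$ and $N=n-\ell(k+1)$; here the recurrence becomes the purely Schröder identity
\[
\phi_{n-\ell}(N)-\phi_n(N)=\sum_{i=1}^{\ell}a_{i,n-\ell}\phi_{n-i}(N-i)+a_{\ell+1,n-\ell}\phi_{n-\ell-1}(N-1).
\]

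The main obstacle is exactly this Schröder identity (and the analogous $k=0$ matching). The naive first-step decomposition of an $\ell$-Schröder path from $(0,n)$ produces the weights $a_{i,n}$ at the top height $n$ rather than the $a_{i,n-\ell}$ demanded above, so no single step surgery closes the recurrence; the content is a relation between paths that start at height $n$ and paths that start at height $n-\ell$. I expect to establish it by a weight-preserving correspondence that interchanges the two step sets — matching a Favard ascent $\alpha_i$ of weight $-a_{i,t+i-1}$ with an $\ell$-Schröder step $\mathsf{a}_i$ of weight $a_{i,t}$ through the height shift $t\mapsto t+i-1$, and matching $\chi_\ell,\alpha_{\ell+1}$ with descents — the ``never below the $x$-axis'' condition dictating the dichotomy: the glued object is a bona fide $\ell$-Schröder path from $(0,n)$ when $n-\ell(k+1)>0$ and folds into a Favard path beginning with $\chi_\ell$ when $n-\ell(k+1)\le0$. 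Since the Favard weights are signed whereas the $\ell$-Schröder weights are not, the correspondence cannot be a plain bijection: the signs must cancel, so I anticipate the argument taking the shape of a sign-reversing involution whose fixed points realize $\phi_n(N)$. Checking that this involution is compatible with the height shift in the weights and with the boundary constraint is where the real difficulty lies.
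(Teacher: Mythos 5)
Your reduction is sound as far as it goes: identifying $\mathcal{L}\left[[x^{>0}]P_n(x)x^{-\ell k}\right]$ with $a_{\ell,0}F(n,k)$, the recurrence for $F$ via the last-step decomposition of Favard paths (with the correct weight index $n-\ell$ and the shift $k\mapsto k-1$ for $\chi_\ell$ and $\alpha_{\ell+1}$), the base cases, and the check that $G$ satisfies the same recurrence when $k\geq 1$ are all correct. But the proposal is not a proof: at what you yourself call ``the main obstacle'' --- the Schr\"oder identity governing $k\leq -1$ and the $k=0$ matching --- you only state what must be shown and express the expectation that some sign-reversing involution will do it. Since the $k<0$ regime is exactly the one producing the $\ell$-Schr\"oder paths in the statement, the lemma's substance rests on an identity you have not established; that is a genuine gap.

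The twist is that the gap is far easier to fill than you believe, and both of your diagnostic claims (``no single step surgery closes the recurrence,'' ``the correspondence cannot be a plain bijection'') are wrong. Move the negative terms to the other side:
\begin{align}
\phi_{n-\ell}(N)=\phi_{n}(N)+\sum_{i=1}^{\ell}a_{i,n-\ell}\,\phi_{n-i}(N-i)+a_{\ell+1,n-\ell}\,\phi_{n-\ell-1}(N-1).
\end{align}
This is precisely the first-step decomposition of $\phi_{n-\ell}(N)$ --- the term starting at height $n-\ell$, not the one at height $n$ that you tried to decompose. A nonempty path from $(0,n-\ell)$ to $(N,0)$ (nonempty holds since $N\geq 1$ in the relevant range) begins with $\mathsf{a}_0$ (weight $1$, leaving $\phi_n(N)$), with $\mathsf{a}_i$ for $1\leq i\leq \ell$ (weight $a_{i,n-\ell}$, leaving $\phi_{n-i}(N-i)$ after a horizontal translation, which preserves weights because weights depend only on starting height), or with $\mathsf{a}_{\ell+1}$ (weight $a_{\ell+1,n-\ell}$, leaving $\phi_{n-\ell-1}(N-1)$, consistently zero when $n=\ell$). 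Every weight produced sits at height $n-\ell$, exactly as the recurrence demands; no signs, no involution. The $k=0$ case needs only $\phi_m(m)=a_{\ell+1,m}\,\phi_{m-1}(m-1)$, immediate because a path from $(0,m)$ to $(m,0)$ preserves $x+y$ except on $\mathsf{a}_{\ell+1}$ steps and is therefore the forced staircase $\mathsf{a}_{\ell+1}^{m}$. With these two observations your induction closes, and the resulting argument is genuinely different from the paper's: the paper sets up no recurrence at all, but instead constructs a Viennot-style sign-reversing involution directly on the pairs $(\eta,\omega)$ of a Favard path and an $\ell$-Schr\"oder path indexing $F(n,k)$, comparing maximal Favard/Schr\"oder prefixes and exchanging a $\chi_\ell$-plus-$\mathsf{a}_i$ configuration with an $\alpha_i$; its fixed points $J_1(n,k)\cup J_2(n,k)$ are the two right-hand sides of \eqref{eq:positive_lemma}. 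Your route replaces that single global involution by an induction whose only combinatorial input is the elementary decomposition above, at the cost of handling the three regimes of $k$ separately.
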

\begin{proof}
	By Theorem \ref{theorem:P_combin} and the definition \eqref{eq:def_comb_moment} of the moments,
	the left-hand side of \eqref{eq:positive_lemma} is written as
	\begin{align}
\mathcal{L}\left[[x^{>0}]P_n(x)x^{-\ell k}\right]
=
a_{\ell,0}\sum_{(\eta,\omega)\in I(n,k)}w(\eta)w(\omega),
	\end{align}
	where
	\begin{align}
		I(n,k)\triangleq\{(\eta,\omega)\mid \eta\in\mathcal{F}_n,\ \w(\eta)\geq\ell(k+1),\ 
		\omega\in\mathcal{B}_{\w(\eta)-\ell(k+1)}\}.
	\end{align}

	Let us define the \emph{prefixes} to interpret the right-hand side of \eqref{eq:positive_lemma}.
	Let $q$ and $r$ be the quotient and the remainder 
	in the $h$ division by $\ell$, where $h$ is a non-negative integer.
	The \emph{Favard prefix} of length $h$, denoted by $f(h)$, is the path 
	from $(0,-\ell+1)$ defined as 
	$f(h)\triangleq \chi_\ell^q$ if $r=0$ and 
	$f(h)\triangleq\chi_r\chi_\ell^q$ if $r>0$,
	where $\chi_\ell^q$ means $\chi_\ell$ repeated $q$ times.
	The \emph{Schr\"oder prefix} of length $h$, denoted by $s(h)$ is the path 
	from $(0,r)$
	defined as $s(h)\triangleq \mathsf{a}_0^q$.

	Let 
	$J_1(n,k),J_2(n,k)$ and $J(n,k)$ be subsets of $I(n,k)$ as 
	\begin{align}
		J_1(n,k)&\triangleq\{(\eta,\varepsilon)
			\mid \eta\in\mathcal{F}_n,\ 
		\text{\rm\ the first step of }\eta\text{\rm\ is }\chi_\ell,\ \w(\eta)=\ell(k+1)\}
		\textrm{ and }\\
			J_2(n,k)&\triangleq
			\{(f(n),s(n)\omega')\mid 
			\text{$\omega'$ is a $\ell$-Schr\"oder path,\ }
			s(n)\omega'\in\mathcal{B}_{n-\ell(k+1)}\},
	\end{align}
	and $J(n,k)\triangleq J_1(n,k)\cup J_2(n,k)$.
	The right-hand side of \eqref{eq:positive_lemma} is the sum over $J(n,k)$ multiplied by $a_{\ell,0}$.
	Indeed when $n\neq0$, we have $J_2(n,k)=\emptyset$ if $k\geq 0$, and $J_1(n,k)=\emptyset$ if $k< 0$,
	hence the sum over $J(n,k)$ is reduced to \eqref{eq:positive_case1} if $k\geq 0$, and 
	\eqref{eq:positive_case2} if $k< 0$.

	Our task is to construct a sign-reversing 
	involution $\varphi$ on the set $I(n,k)\setminus J(n,k)$.
	The involution we construct is similar to 
	Viennot's \cite{V} but a bit complicated.
	For readers unfamiliar with this type of involutions,
	we recommend \cite{CKS} for a detailed description.

	For any Favard path $\eta\in\mathcal{F}_n\ (n\in\mathbb{N}_0)$, 
	let $h_\eta\in\mathbb{N}_0$ be the maximum integer such that $\eta=f(h_\eta)\eta_1$
	for some path $\eta_1$.
	Also, for any $\ell$-Schr\"oder path $\omega\in\mathcal{B}_n\ (n\in\mathbb{N}_0)$, 
	let $h_\omega\in\mathbb{N}_0$ be the maximum integer
	such that $\omega=s(h_{\omega})\omega_1$.

	The involution $\varphi$ works as follows.
	Let $(\eta,\omega)\in I(n,k)\setminus J(n,k)$.
	It compares the values $h_\eta$ and $h_\omega$.
	\begin{description}
		\item[Case 1: When $h_{\eta}>h_{\omega}$.]\mbox{}\\
			We can write $\eta$ and $\omega$ as
			$\eta=f(h_{\omega})\mu_1$ and $\omega=s(h_\omega)\omega_1$
			for some path $\eta_1$ and $\omega_1$.
			Then $\omega_1\neq\varepsilon$ because $(\eta,\omega)\notin J_1(n,k)$.
			The first step of $\omega_1$ is $\mathsf{a}_i$ for some $i\in\{1,\ldots,\ell+1\}$ 
			because of the maximality of $h_\omega$.
			Also, the first step of $\eta_1$ is $\chi_\ell$ because of 
			the maximality of $h_\eta$ and $h_\eta>h_\omega$.
			Thus we can write $(\eta,\omega)$ as $(\eta,\omega)
			=(f(h_\omega)\chi_\ell\eta_2,s(h_\omega)\mathsf{a}_i\omega_2)$
			for some paths $\eta_2$ and $\omega_2$.
			Then $h_\omega+\ell-i\geq 0$.
			Define the value of $\varphi$ as $\varphi(\eta,\omega)\triangleq
			(f(h_\omega+\ell-i)\alpha_i\eta_2,s(h_\omega+\ell-i)\omega_2)$.
		\item[Case 2: When $h_\eta\leq h_\omega$.]\mbox{}\\
			Write $\eta$ and $\omega$ as
			$\eta=f(h_{\eta})\eta_1$ and $\omega=s(h_\eta)\omega_1$.
			Then $\eta_1\neq\varepsilon$ because $(\eta,\omega)\notin J_2(n,k)$.
			The first step of $\eta_1$ is $\alpha_i$ for some $i\in\{1,\ldots,\ell+1\}$
			hence we can write $\eta=f(h_\eta)\alpha_i\eta_2$.
			We have $h_\eta+i-\ell\geq 0$ since $\eta$ is a Favard path.
			Define the value of $\varphi$ as $\varphi(\eta,\omega)\triangleq
			(f(h_\eta+i)\eta_2,s(h_\eta+i-\ell)\mathsf{a}_i\omega_1)$.
	\end{description}
	One can check that $\varphi(\eta,\omega)\in I(n,k)\setminus J(n,k)$ and that
	$\varphi$ is an involution.
	To check $\varphi$ is sign-reversing, let $(\eta',\omega')\triangleq\varphi(\eta,\omega)$.
	Suppose we are in Case 1.
	Let $\eta_2$ and $\omega_2$ be the paths that satisfies 
	$(\eta,\omega)=(t(h_\omega)\chi_\ell\eta_2,s(h_\omega)\mathsf{a}_i\omega_2)$ and 
	suppose that the initial point of $\eta_2$ and $\omega_2$ are
 	the final point of $f(h_\omega)\chi_\ell$ and 
	$s(h_\omega)\mathsf{a}_i$, respectively.
	We have
	\begin{align}
		w(\eta)w(\omega)=w(\eta_2)a_{i,h_\omega}w(\omega_2).
	\end{align}
	On the other hand, we have
	\begin{align}
		w(\eta')w(\omega')
		=-a_{i,(-\ell+1+h_\omega+\ell-i)+i-1}w(\eta_2)w(\omega_2)=-a_{i,h_\omega}w(\eta_2)w(\omega_2).
	\end{align}
	Thus we get $w(\eta')w(\omega')=-w(\eta)w(\omega)$.
	A similar proof works for Case 2.
\end{proof}
\begin{figure}[htbp]
  \begin{minipage}[b]{0.4\linewidth}
    \centering
	\includegraphics{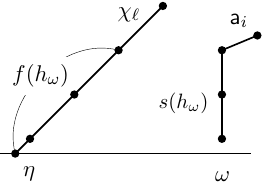}
  \end{minipage}
  \begin{minipage}[b]{0.19\linewidth}
    \centering
	$\underset{1:1}{\overset{\varphi}{\longleftrightarrow}}$
  \end{minipage}
  \begin{minipage}[b]{0.4\linewidth}
    \centering
	\includegraphics{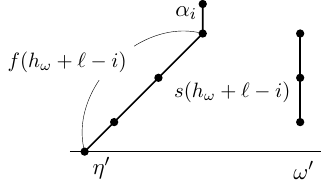}
  \end{minipage}
  \caption{how the involution $\varphi$ works in the proof of Lemma \ref{lemma:positive}.}
  \label{fig:inv1}
\end{figure}
Figure \ref{fig:inv1} illustrates how the involution $\varphi$ in the proof of Lemma \ref{lemma:positive}
changes the paths.
The figure represents the beginning part of the paths $\eta$ and $\omega$ where
$(\eta,\omega)\in I(n,k)\setminus J(n,k)$.
When we are In Case 1, the
path's beginning parts are like the figure's left side.
The involution $\varphi$ changes them to the right side
while leaving the remaining parts unchanged.

\begin{lemma}
	\label{lemma:negative}
	Let $\mathcal{L}$ be the linear functional defined by the moments \eqref{eq:def_comb_moment}.
	For any non-negative integer $n\in\mathbb{N}_0$ and integer $k\in\mathbb{Z}$, we have
\begin{subequations}  \label{eq:negative_lemma}
    \begin{empheq}[left = {
		\mathcal{L}\left[[x^{\leq 0}]P_n(x)x^{-\ell k}\right]=
	\empheqlbrace \,}]{alignat = 2}
        & 
	\sum_{\substack{\eta\in\mathcal{F}_{n},\\
	\eta{\rm\ starts\ with\ }\alpha_{\ell},\\
	\w(\eta)=\ell k}}
	w(\eta),
		&\qquad&   \text{if $k<n$,}  \label{eq:negative_case1}  \\
        & 
	(-1)^n
	\sum_{\substack{\omega\colon\text{\rm dual }\ell\text{\rm -Schr\"oder}\\
	\text{\rm from }(0,n)\\\text{\rm to }(\ell k-n,0)}}
	w(\omega),
		&     &   \text{if $k\geq n$},  \label{eq:negative_case2}
    \end{empheq}
\end{subequations}
	where $[x^{\leq 0}]P_n(x)x^{-\ell k}$ denotes the sum of terms of degree equal to or less than zero
	in the 
	Laurent polynomial $P_n(x)x^{-\ell k}$.
	The sum \eqref{eq:negative_case1} on the right-hand side is taken over all Favard paths in $\mathcal{F}_n$
	whose first steps are $\alpha_\ell$
	and whose widths are $\ell k$.
	The sum \eqref{eq:negative_case2} is taken over all dual $\ell$-Schr\"oder paths
	from $(0,n)$ to $(\ell k-n,0)$.
\end{lemma}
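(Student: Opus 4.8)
The plan is to mirror the proof of Lemma \ref{lemma:positive}, replacing $\ell$-Schr\"oder paths by dual $\ell$-Schr\"oder paths and carefully tracking the extra signs carried by the dual weights. First I would apply Theorem \ref{theorem:P_combin} to write $[x^{\le 0}]P_n(x)x^{-\ell k}=\sum_{\eta\in\mathcal{F}_n,\ \w(\eta)\le\ell k}w(\eta)x^{\w(\eta)-\ell k}$, and then substitute the definition \eqref{eq:def_comb_moment} of the moments of non-positive degree. Since $\w(\eta)-\ell k\equiv\w(\eta)\pmod\ell$, this expresses the left-hand side as a signed weighted sum
\[
\mathcal{L}[[x^{\le 0}]P_n(x)x^{-\ell k}]=\sum_{(\eta,\widetilde\omega)\in\widetilde I(n,k)}(-1)^{\w(\eta)\bmod\ell}\,w(\eta)\,w(\widetilde\omega)
\]
over the set $\widetilde I(n,k)$ of pairs $(\eta,\widetilde\omega)$ with $\eta\in\mathcal{F}_n$, $\w(\eta)\le\ell k$, and $\widetilde\omega\in\widetilde{\mathcal{B}}_{\ell k-\w(\eta)}$. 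Note that every Favard step has width at most its height, so $\w(\eta)\le n$ for all $\eta\in\mathcal{F}_n$; this is what makes $k<n$ versus $k\ge n$ the natural dichotomy.

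Next I would isolate a surviving subset $\widetilde J(n,k)\subseteq\widetilde I(n,k)$ whose signed weight equals the claimed right-hand side, and build a sign-reversing involution on the complement. Guided by Lemma \ref{lemma:positive}, the involution should swap a Favard step $\alpha_i$ against a dual Schr\"oder step $\mathsf{A}_i$, so that the factor $a_{i,\cdot}$ migrates between $\eta$ and $\widetilde\omega$, using as ``advance'' steps the pair $\chi_\ell$ and $\mathsf{A}_0$, which share the displacement $(\ell,\ell)$; comparing the maximal lengths of the induced prefixes (Favard prefixes built from $\chi_\ell$, dual Schr\"oder prefixes built from $\mathsf{A}_0$) decides in which direction to move a step. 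In the regime $k<n$ the Favard side cannot be reduced, so the survivors are the pairs $(\eta,\varepsilon)$ with $\w(\eta)=\ell k$ and $\eta$ beginning with $\alpha_\ell$, recovering \eqref{eq:negative_case1}; in the regime $k\ge n$ the dual Schr\"oder side dominates, the Favard factor collapses to the prefix $f(n)$, and the survivors are the paths $\widetilde\omega\in\widetilde{\mathcal{B}}_{\ell k-n}$, which must then be shown to reproduce \eqref{eq:negative_case2}.

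The hard part will be the sign bookkeeping, which is genuinely heavier than in Lemma \ref{lemma:positive} because the dual weights $w(\mathsf{A}_i)$ already carry the signs $(-1)^{\ell+1+i}$ and the denominators $a_{\ell,t}$, while the global prefactor $(-1)^{\w(\eta)\bmod\ell}$ itself shifts whenever the swap changes $\w(\eta)$. I would verify case by case that the sign from $w(\alpha_i)=-a_{i,\cdot}$, the sign $(-1)^{\ell+1+i}$ from $w(\mathsf{A}_i)$, the change in $(-1)^{\w(\eta)\bmod\ell}$, and the $a_{\ell,\cdot}$ factors supplied by the advance steps $\chi_\ell$ and $\mathsf{A}_0$ combine to exactly reverse the sign while keeping the pair in $\widetilde I(n,k)$. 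The most delicate point is the reconciliation when $k\ge n$ and $n\ge\ell$: because dual $\ell$-Schr\"oder paths admit no weight-one vertical step (unlike the step $\mathsf{a}_0$ available in the positive case), the discrepancy between the starting height $\ell-((\ell k-n)\bmod\ell)$ of the paths in $\widetilde{\mathcal{B}}_{\ell k-n}$ and the height $n$ demanded by \eqref{eq:negative_case2}, together with the passage from $(-1)^{\w(\eta)\bmod\ell}$ to the global factor $(-1)^n$, must be produced by converting the $\chi_\ell$ steps of the Favard prefix into $\mathsf{A}_0$ steps through the identity $\chi_\ell=\mathsf{A}_0=(\ell,\ell)$. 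Checking that this conversion is simultaneously weight-preserving, after the $a_{\ell,\cdot}$ cancellations, and height-consistent is where I expect the main effort to lie.
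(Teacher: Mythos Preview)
Your overall strategy and the setup of $\widetilde I(n,k)$ with the sign $(-1)^{\w(\eta)\bmod\ell}$ match the paper exactly. The gap is in your choice of Favard-side advance step. You propose to reuse the prefix $f(h)$ built from $\chi_\ell$, but the paper instead introduces a new \emph{dual-Favard prefix} $F(h)\triangleq\chi_r\alpha_\ell^q$ (and a dual-Schr\"oder prefix $S(h)\triangleq\mathsf{A}_0^q$). This switch from $\chi_\ell$ to $\alpha_\ell$ is not cosmetic: with $\alpha_\ell$ as the repeated step, the survivors $(\eta,\varepsilon)$ are automatically the Favard paths beginning with $\alpha_\ell$, which is precisely \eqref{eq:negative_case1}; and the other survivor $\eta=F(n)$ has width only $r=n\bmod\ell$, so that $\omega=S(n)\omega'\in\widetilde{\mathcal{B}}_{\ell k-r}$ with $S(n)=\mathsf{A}_0^q$ already lifting the dual path up to height $n$. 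This gives a bijection with \emph{all} dual $\ell$-Schr\"oder paths from $(0,n)$, and the factor $(-1)^n$ drops out of $(-1)^r\,w(F(n))w(S(n))=(-1)^r(-1)^{q\ell}$.

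Your version breaks on both counts. With advance step $\chi_\ell$, the survivors $(\eta,\varepsilon)$ would naturally be the Favard paths beginning with $\chi_\ell$, contradicting what you yourself claim for the $k<n$ regime. And for $k\ge n$, your survivor $(f(n),\widetilde\omega)$ has $\w(f(n))=n$, so $\widetilde\omega\in\widetilde{\mathcal{B}}_{\ell k-n}$ starts at height $n\bmod\ell$ (or $\ell$); the ``conversion of $\chi_\ell$ into $\mathsf{A}_0$'' you describe is not a bijection onto all dual $\ell$-Schr\"oder paths from $(0,n)$ --- it hits only those that begin with $\mathsf{A}_0^{\lfloor n/\ell\rfloor}$, so the weighted sums genuinely differ once $n\ge\ell$. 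There is also an extra case in the paper's involution that your description ``swap $\alpha_i$ against $\mathsf{A}_i$'' does not capture: when the first non-prefix step of $\omega$ is $\mathsf{A}_\ell$, the move does not transfer a step between the two paths but converts $\alpha_\ell\leftrightarrow\chi_\ell$ inside $\eta$ while deleting/inserting $\mathsf{A}_\ell$ in $\omega$.
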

\begin{proof}
	The left-hand side of the equation \eqref{eq:negative_lemma}
	can be written as 
	\begin{align}
		\mathcal{L}\left[[x^{\leq 0}]P_n(x)x^{-\ell k}\right]
		=
		\sum_{(\eta,\omega)\in\widetilde{I}(n,k)}w(\eta)w(\omega)(-1)^{\w(\eta)\bmod\ell},
	\end{align}
	where
	\begin{align}
		\widetilde{I}(n,k)\triangleq\{(\eta,\omega)\mid
		\eta\in\mathcal{F}_n,\ \w(\eta)\leq \ell k,\ 
	\omega\in\widetilde{B}_{\ell k-\w(\eta)}\}.
	\end{align}
	Let $q$ and $r$ be the quotient and the remainder of dividing $h$ by $\ell$.
	We define the \emph{dual-Favard prefix} $F(h)$ as the path from $(0,-\ell+1)$ with 
 	$F(h)\triangleq \alpha_\ell^q$ if $r=0$ and 
	$F(h)\triangleq \chi_r\alpha_\ell^q$ if $r>0$.
	Also, define the dual-Schr\"oder prefix $S(h)$ as the path from $(0,r)$ with steps
	$S(h)\triangleq \mathsf{A}_0^q$.

	Define subsets $\widetilde{J}_1(n,k),\widetilde{J}_2(n,k)$
	and $\widetilde{J}(n,k)$ of $\widetilde{I}(n,k)$ as
	\begin{align}
		\widetilde{J}_1(n,k)&\triangleq
		\{(\eta,\varepsilon)\mid \eta\in\mathcal{F}_n,\ 
		{\rm the\ first\ step\ of\ }\eta{\rm\ is\ } \alpha_\ell, \w(\eta)=\ell k\},\\
		\widetilde{J}_2(n,k)&\triangleq
		\{(F(n),S(n)\omega')\mid \text{$\omega'$ is a dual $\ell$-Schr\"oder path,\ }
		S(n)\omega'\in\widetilde{\mathcal{B}}_{\ell k-r}
		\},
	\end{align}
	and $\widetilde{J}(n,k)\triangleq \widetilde{J}_1(n,k)\cup J_2(n,k)$.
	The right-hand side of \eqref{eq:negative_lemma} is the weighted sum over $\widetilde{J}(n,k)$
	since when $n\neq0$, we have $\widetilde{J}_1(n,k)=\emptyset$ if $k\geq n$ and 
	$\widetilde{J}_2(n,k)=\emptyset$ if $k < n$.

	For any $(\eta,\omega)\in \widetilde{I}(n,k)\setminus\widetilde{J}(n,k)$, let $h_{\eta}$
	(resp. $h_{\omega}$) be the maximum integer such that
	$\eta=F(h_{\eta})\eta_1$ for some paht $\eta_1$
	(resp. $\omega=S(h_{\omega})\omega_1$ for some path $\omega_1$).
	Let us define a sign-reversing involution $\widetilde{\varphi}$ on  the set 
	$\widetilde{I}(n,k)\setminus\widetilde{J}(n,k)$, similar to that in Lemma \ref{lemma:positive}.
	It compares the values
	$h_\eta$ and $h_\omega$.
	\begin{description}
		\item[Case 1: When $h_{\eta}>h_{\omega}$.]\mbox{}\\
			We can write $\eta$ and $\omega$ as
			$\eta=F(h_{\omega})\eta_1$ and $\omega=S(h_\omega)\omega_1$
			using some path $\eta_1$ and $\omega_1$.
			Then $\omega_1\neq\varepsilon$ since $(\eta,\omega)\notin \widetilde{J}_1(n,k)$.
			The first step of $\omega_1$ is $\mathsf{A}_i$ for some $i\in\{1,\ldots,\ell+1\}$ 
			because of the maximality of $h_\omega$.
			Also, the first step of $\eta_1$ is $\alpha_\ell$ because of 
			the maximality of $h_\eta$ and $h_\eta>h_\omega$.
			Thus we can write $(\eta,\omega)$ as $(\eta,\omega)
			=(F(h_\omega)\alpha_\ell\eta_2,s(h_\omega)\mathsf{A}_i\omega_2)$
			for some paths $\eta_2$ and $\omega_2$.
			Define the value of $\widetilde{\varphi}$ as
			\begin{align}
				\widetilde{\varphi}(\eta,\omega)\triangleq
				\left\{
				\begin{array}{ll}
					(F(h_\omega)\chi_\ell\eta_2,S(h_\omega)\omega_2),
					&{\rm if}\ i=\ell,\\
					(F(h_\eta+\ell-i)\alpha_{i}\eta_2,S(h_\eta+\ell-i)\omega_2),\quad&
					{\rm if}\ i\neq \ell.
				\end{array}
					\right.
		\end{align}
		\item[Case 2: When $h_\eta\leq h_\omega$.]\mbox{}\\
			Write as $\eta=F(h_{\eta})\eta_1$ and $\omega=S(h_\eta)\omega_1$.
			Then $\eta_1\neq\varepsilon$ since $(\eta,\omega)\notin \widetilde{J}_2(n,k)$, hence 
			let the first step of $\eta_1$ be $\sigma$. From the maximality of $h_\eta$,
			we have $\sigma\neq\alpha_\ell$, that is, 
 			$\sigma\in\{\alpha_1,\alpha_2,\ldots,\alpha_{\ell-1},\alpha_{\ell+1},\chi_\ell\}$.
			We can write $(\eta,\omega)$ as $(\eta,\omega)=(F(h_\omega)\sigma\eta_2,S(h_\eta)\omega_1)$.
			Define the value of $\widetilde{\varphi}$ as
			\begin{align}
				\widetilde{\varphi}(\eta,\omega)\triangleq
				\left\{
				\begin{array}{ll}
					(F(h_\eta)\alpha_\ell\eta_2,S(h_\eta)\mathsf{A}_\ell\omega_1),
					&{\rm if}\ \sigma=\chi_\ell,\\
					(F(h_\eta+i-\ell)\alpha_{\ell}\eta_2,S(h_\eta+i-\ell)\mathsf{A}_i\omega_1),\quad&
					{\rm if}\ \sigma=\alpha_i\ {\rm for\ some\ }i\neq \ell.
				\end{array}
					\right.
		\end{align}
	\end{description}
		Let $(\eta^{\ast},\omega^{\ast})\triangleq \widetilde{\varphi}(\eta,\omega)$.
		We show $\widetilde{\varphi}(\eta,\omega)\in \widetilde{I}(n,k)\setminus\widetilde{J}(n,k)$
		by showing 
		\begin{align}
			\label{eq:wwww}
		\w(\eta)+\w(\omega)=\w(\eta^{\ast})+\w(\omega^{\ast}).
		\end{align}
		If \eqref{eq:wwww} holds,
		then from $\omega\in\widetilde{\mathcal{B}}_{\ell k-\w(\eta)}$, we have
		$\w(\omega^\ast)=\ell k-\w(\eta^\ast)$
		and from $\w(\omega^\ast)\geq 0$, we have $\ell k\geq \w(\eta^\ast)$;
		Thus we have $(\eta^\ast,\omega^\ast)\in\widetilde{I}(n,k)$ if \eqref{eq:wwww} holds.
		The proof of \eqref{eq:wwww} is the following. When in Case 1 and when
		$(\eta,\omega)=(F(h_\omega)\alpha_\ell\eta_2,S(h_\omega)\mathsf{A}_\ell\omega_2)$,
		we have
		\begin{align}
			\left\{
				\begin{array}{l}
					\w(\eta)=r+\w(\eta_2),\\
					\w(\omega)=\ell q+\ell+\w(\omega_2),
	\end{array}
\right.\ 
{\rm and}\ 
	\left\{
		\begin{array}{l}
			\w(\eta^\ast)=r+\ell+\w(\eta_2),\\
			\w(\omega^\ast)=\ell q+\w(\omega_2),
	\end{array}
	\right.
	\end{align}
	where the quotient and the remainder of dividing $h_\omega$ by $\ell$ are
	denoted as $q$ and $r$, respectively, 
	hence \eqref{eq:wwww} holds.
	A similar proof works for the remaining cases.
	It is easy to check that $\widetilde{\varphi}$ is an involution.
	The remaining task is to check $\widetilde{\varphi}$ is sign-reversing.
	When in Case 1 and when 
	$(\eta,\omega)=(F(h_\omega)\alpha_\ell\eta_2,S(h_\omega)\mathsf{A}_\ell\omega_2)$,
	it suffices to show that 
	\begin{align}
		\label{eq:wwww2}
		w(F(h_\omega)\alpha_\ell)w(S(h_\omega)\mathsf{A}_\ell)=
		-w(F(h_\omega)\chi_\ell)w(S(h_\omega)).
	\end{align}
	By substituting
	\begin{align}
		\label{eq:wFh}
		w(F(h_\omega)\alpha_\ell)&=\left(\prod_{j=0}^{q-1}(-a_{\ell,r+j\ell})\right)(-a_{\ell,h_\omega}),\\
		w(S(h_\omega)\mathsf{A}_\ell)&=\left(\prod_{j=0}^{q-1}\frac{(-1)^{\ell+1}}{a_{\ell,r+j\ell}}\right)
		\frac{1}{a_{\ell,h_\omega}},\\
		w(F(h_\omega)\chi_\ell)&=\left(\prod_{j=0}^{q-1}(-a_{\ell,r+j\ell})\right),\quad\textrm{and}\\
		w(S(h_\omega))&=\left(\prod_{j=0}^{q-1}\frac{(-1)^{\ell+1}}{a_{\ell,r+j\ell}}\right),
	\end{align}
	one can check that \eqref{eq:wwww2} holds.
	When 
	$(\eta,\omega)=(F(h_\omega)\alpha_\ell\eta_2,S(h_\omega)\mathsf{A}_i\omega_2)$ for $i\neq \ell$,
	let us write $q^\ast$ and $r^\ast$ the quotient and the remainder of dividing $h_\omega+\ell-i$ by $\ell$,
	respectively.
	It suffices to show that 
	\begin{align}
		w(F(h_\omega)\alpha_\ell)w(S(h_\omega)\mathsf{A}_i)(-1)^r=
		-w(F(h_\omega+\ell-i)\alpha_\ell)w(S(h_\omega+\ell-i))(-1)^{r^\ast}.\\
		\label{eq:wwww3}
	\end{align}
	By substituting
	\begin{align}
		w(S(h_\omega)\mathsf{A}_i)&=\left(\prod_{j=0}^{q-1}\frac{(-1)^{\ell+1}}{a_{\ell,r+j\ell}}\right)
		\frac{(-1)^{\ell+1+i}a_{i,h_\omega}}{a_\ell,h_\omega},\\
		w(F(h_\omega+\ell-i)\alpha_i)&=\left(\prod_{j=0}^{q^\ast-1}(-a_{\ell,r^\ast+j\ell})\right)(-a_{i,h_\omega}),\\
		w(S(h_\omega+\ell-i))&=\prod_{j=0}^{q^\ast-1}\frac{(-1)^{\ell+1}}{a_{\ell,r^\ast-j\ell}},
	\end{align}
	and \eqref{eq:wFh},
	one can check that \eqref{eq:wwww3} holds.
	A similar proof works for Case2. 
 \end{proof}
\begin{figure}[htbp]
  \begin{minipage}[b]{0.43\linewidth}
    \centering
	\includegraphics{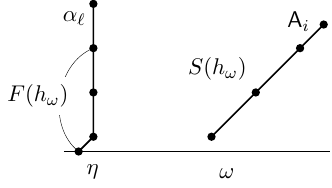}
  \end{minipage}
  \begin{minipage}[b]{0.03\linewidth}
    \centering
	$\underset{1:1}{\overset{\widetilde{\varphi}}{\longleftrightarrow}}$
  \end{minipage}
  \begin{minipage}[b]{0.45\linewidth}
    \centering
	\includegraphics{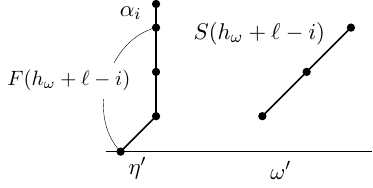}
  \end{minipage}
  \caption{how the involution $\widetilde{\varphi}$ works
  in the proof of Lemma \ref{lemma:negative}.}
  \label{fig:inv2}
\end{figure}
Figure \ref{fig:inv2} illustrates how the involution $\widetilde{\varphi}$
in the proof of Lemma \ref{lemma:negative}
changes the paths.
In Case 1, 
the beginning parts of the paths are like the left side. 
The involution $\widetilde{\varphi}$ changes them as the right side
while leaving the remaining parts unchanged.

Combining Lemma \ref{lemma:positive} and Lemma \ref{lemma:negative},
we obtain the general orthogonality.
\noeqref{eq:g_orth1,eq:g_orth2,eq:g_orth3}
\begin{theorem}
	\label{theorem:g_orth}
	Let $\mathcal{L}$ be the linear functional defined by the moments \eqref{eq:def_comb_moment}.
	For any non-negative integer $n\in\mathbb{N}_0$ and integer $k\in\mathbb{Z}$, we have
\begin{subequations}  \label{eq:g_orth}
    \begin{empheq}[left = {
		\mathcal{L}\left[P_n(x)x^{-\ell k}\right]=
	\empheqlbrace \,}]{alignat = 2}
        & 
	(-1)^n
	\sum_{\substack{\omega\colon\text{\rm dual }\ell\text{\rm -Schr\"oder}\\
	\text{\rm from }(0,n)\text{\rm to }(\ell k-n,0)}}
	w(\omega),
		& \qquad    &   \text{if $n\leq k$},  \label{eq:g_orth1}\\
		&0,&&\text{if $0\leq k\leq n-1$},\label{eq:g_orth2}\\
        & 
	a_{\ell,0}
	\sum_{\substack{\omega\colon\ell\textrm{\rm -Schr\"oder}\\
\text{\rm from }(0,n)
\text{\rm to }(n-\ell(k+1),0)}}w(\omega),
		&     &   \text{if $k<0$}.  \label{eq:g_orth3}
    \end{empheq}
\end{subequations}
More specifically, we have
\begin{align}
	\mathcal{L}[P_n(x)x^{-n\ell}]=(-1)^n\prod_{i=1}^n\frac{a_{\ell+1,i}}{a_{\ell,i}}
\end{align}
when $k=n$, and 
\begin{align}
	\mathcal{L}[P_n(x)x^{\ell}]=a_{\ell,0}\prod_{i=1}^na_{\ell+1,i}
\end{align}
when $k=-1$.
\end{theorem}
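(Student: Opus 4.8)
The plan is to write $\mathcal{L}[P_n(x)x^{-\ell k}]=\mathcal{L}\big[[x^{>0}]P_n(x)x^{-\ell k}\big]+\mathcal{L}\big[[x^{\le 0}]P_n(x)x^{-\ell k}\big]$ and to read off the two summands directly from Lemma \ref{lemma:positive} and Lemma \ref{lemma:negative}. The three branches of \eqref{eq:g_orth} then correspond to the three ranges $n\le k$, $0\le k\le n-1$, and $k<0$; in each range I would combine the two contributions, so the entire task reduces to showing that in each range one contribution vanishes or that the two contributions cancel.

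For the range $n\le k$ I would show the positive part is empty: every Favard path satisfies $\w(\eta)\le \h(\eta)=n$, since $\chi_i$ has equal width and height while $\alpha_i$ and $\alpha_{\ell+1}$ have width strictly below their height; thus no path of width $\ell(k+1)\ge\ell(n+1)>n$ occurs in \eqref{eq:positive_case1}, and only the dual $\ell$-Schr\"oder sum \eqref{eq:negative_case2} survives, giving \eqref{eq:g_orth1}. Symmetrically, for $k<0$ the Favard sum \eqref{eq:negative_case1} ranges over paths of width $\ell k<0$, which cannot exist, so only the $\ell$-Schr\"oder sum \eqref{eq:positive_case2} survives, giving \eqref{eq:g_orth3}.

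The heart of the proof is the middle range $0\le k\le n-1$, where both \eqref{eq:positive_case1} and \eqref{eq:negative_case1} are genuine Favard sums that must cancel. Here I would set up the weight-preserving, sign-reversing bijection sending $\eta=\chi_\ell\eta'$ (width $\ell(k+1)$, counted by \eqref{eq:positive_case1}) to $\zeta=\alpha_\ell\eta'$ (width $\ell k$, counted by \eqref{eq:negative_case1}): replacing the leading $\chi_\ell$ (width $\ell$, weight $1$) by $\alpha_\ell$ (width $0$, weight $-a_{\ell,0}$) drops the width by $\ell$, keeps the height $n$, and multiplies the weight by $-a_{\ell,0}$, so that $w(\zeta)=-a_{\ell,0}w(\eta)$. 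The contributions $a_{\ell,0}\sum w(\eta)$ and $\sum w(\zeta)$ then cancel, yielding \eqref{eq:g_orth2}. I expect the main obstacle to be checking that this really is a bijection between the two path sets, i.e. that both Favard conditions survive the swap; this holds because $\chi_\ell$ and $\alpha_\ell$ share the height $\ell$, so the tail $\eta'$ starts at the same height in both paths and all subsequent step-weights, which depend only on height, are unchanged, while the extra factor $a_{\ell,0}$ carries non-negative index.

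Finally, the two explicit formulas follow by specializing to $k=n$ and $k=-1$ and checking that the relevant path set is a singleton. Because width and height are additive along a path, and the combination $\w+(\ell-1)\h$ is non-negative on every dual $\ell$-Schr\"oder step and vanishes exactly on $\mathsf{A}_{\ell+1}$, while on the displacement from $(0,n)$ to the target $(n(\ell-1),0)$ it equals $0$, the only dual path is $\mathsf{A}_{\ell+1}^{n}$; likewise for $k=-1$ the functional $\w+\h$, non-negative on every $\ell$-Schr\"oder step and vanishing only on $\mathsf{a}_{\ell+1}$, forces the unique path $\mathsf{a}_{\ell+1}^{n}$ to the target $(n,0)$. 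Each singleton visits the heights $n,n-1,\ldots,1$ with its descending steps, so its weight telescopes to $\prod_{i=1}^{n}\frac{a_{\ell+1,i}}{a_{\ell,i}}$ (using $w(\mathsf{A}_{\ell+1},(s,t))=a_{\ell+1,t}/a_{\ell,t}$) and to $\prod_{i=1}^{n}a_{\ell+1,i}$ (using $w(\mathsf{a}_{\ell+1},(s,t))=a_{\ell+1,t}$), respectively; multiplying by the prefactors $(-1)^n$ from \eqref{eq:g_orth1} and $a_{\ell,0}$ from \eqref{eq:g_orth3} produces the two stated evaluations.
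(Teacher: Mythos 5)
Your proposal is correct and takes essentially the same route as the paper: substitute Lemma \ref{lemma:positive} and Lemma \ref{lemma:negative} into the decomposition $\mathcal{L}[P_n(x)x^{-\ell k}]=\mathcal{L}\bigl[[x^{>0}]P_n(x)x^{-\ell k}\bigr]+\mathcal{L}\bigl[[x^{\leq 0}]P_n(x)x^{-\ell k}\bigr]$ and treat the three ranges of $k$ separately. You additionally spell out details the paper leaves implicit --- the width bound killing \eqref{eq:positive_case1} when $n\leq k$, the sign-reversing first-step swap $\chi_\ell\eta'\leftrightarrow\alpha_\ell\eta'$ giving the cancellation for $0\leq k\leq n-1$, and the invariant arguments identifying the singleton paths $\mathsf{A}_{\ell+1}^n$ and $\mathsf{a}_{\ell+1}^n$ at $k=n$ and $k=-1$ --- all of which are valid.
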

\begin{proof}
	Substitute \eqref{eq:positive_lemma} and \eqref{eq:negative_lemma} into
	 \begin{align}
		 \mathcal{L}[P_n(x)x^{-\ell k}]
		 =
		 \mathcal{L}[[x^{>0}]P_n(x)x^{-\ell k}]
		 +\mathcal{L}[[x^{\leq0}]P_n(x)x^{-\ell k}].
	 \end{align}
	We get \eqref{eq:g_orth1} since $\mathcal{L}[[x^{>0}]P_n(x)x^{-\ell k}]=0$ if $n\leq k$, 
	get \eqref{eq:g_orth3} since $\mathcal{L}[[x^{\leq 0}]P_n(x)x^{-\ell k}]=0$ if $k<0$, and 
	\eqref{eq:g_orth2} since $\mathcal{L}[[x^{>0}]P_n(x)x^{-\ell k}]
		 +\mathcal{L}[[x^{\leq0}]P_n(x)x^{-\ell k}]=0$ if $0\leq k\leq n-1$.
	When $k=n$, the right-hand side of \eqref{eq:g_orth} is the weight of 
	the pair $(\eta,\omega)=(F(n),S(n)\mathsf{A}_{\ell+1}^n)$.
	When $k=-1$, the right-hand side is the wheigh of 
	$(\eta,\omega)=(f(n),s(n)\mathsf{a}_{\ell+1}^n)$.
\end{proof}
\section{Non-intersecting tuplets of $\ell$-Schr\"oder paths}
\label{section:4}
In the latter half of this paper,
we study enumerative aspects of $\ell$-Schr\"oder paths and 
new paths called 
\emph{small $\ell$-Schr\"oder paths}.
The results are applied to obtain 
closed-form generating functions for non-intersecting tuplets of $\ell$-Schr\"oder paths,
the second aim of this paper.

Before we get to the main point,
we would like to briefly discuss \emph{Narayana polynomials} 
to relate the results of this chapter to the $\ell$-LBPs.
Recall that the conventional Narayana polynomials \cite{K2,BSS} are the generating functions
of Schr\"oder paths concerning the number of steps $(2,0)$.
We define the $n$-th $\ell$-Narayana polynomial $N_n(a_1,\ldots,a_{\ell+1})$ as 
\begin{align}
	N_n=N_n(a_{1},\ldots,a_{\ell+1})\triangleq\sum_{\omega\in\mathcal{B}_{n}}
	\prod_{i=1}^{\ell+1}a_i^{(\text{number of steps $\mathsf{a_i}$ in $\omega$})}.
\end{align}
The 
moments of positive degrees of
primitive $\ell$-LBPs are written by $\ell$-Narayana polynomials as
$\mu_{n+\ell}=a_\ell N_n\ (n\geq 0)$
when the recurrence coefficients in \eqref{eq:recurrence} are constants
as $a_{i,j}\triangleq a_i$ for all $i=1,\ldots,\ell+1$ and $j\in\mathbb{N}_0$.
Using the results of this chapter, we will 
show the following two property of $\ell$-Narayana polynomials.
\begin{theorem}
	\label{theorem:Narayana_root}
	For any integer $n\geq \ell$, the $n$-th $\ell$-Narayana polynomial is
	divisible by $\sum_{i=0}^{\ell}a_ia_{\ell+1}^{\ell-i}$,
	where $a_0\triangleq 1$.
\end{theorem}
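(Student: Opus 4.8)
The plan is to reduce the divisibility to a single polynomial identity and then prove that identity by a sign-reversing involution. Write $\Phi\triangleq\sum_{i=0}^{\ell}a_ia_{\ell+1}^{\ell-i}$ and regard $N_n$ as a polynomial in $a_\ell$ with coefficients in $\mathbb{K}[a_1,\dots,a_{\ell-1},a_{\ell+1}]$. Since $\Phi=a_\ell+\sum_{i=0}^{\ell-1}a_ia_{\ell+1}^{\ell-i}$ is monic of degree one in $a_\ell$, the factor theorem shows that $\Phi\mid N_n$ if and only if $N_n$ vanishes after the substitution $a_\ell\mapsto-\sum_{i=0}^{\ell-1}a_ia_{\ell+1}^{\ell-i}$ (with $a_0=1$); no irreducibility of $\Phi$ is needed. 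So it suffices to prove that this specialization of $N_n$ is zero for every $n\geq\ell$.

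The key geometric observation I would use is that a single step $\mathsf{a}_\ell=(\ell,0)$ has exactly the same displacement as the \emph{tent} formed by an up-step $\mathsf{a}_i$ ($0\leq i\leq\ell-1$) immediately followed by $\ell-i$ down-steps $\mathsf{a}_{\ell+1}$, and the weight of that tent is $a_ia_{\ell+1}^{\ell-i}$. Because a tent based at height $t\geq 0$ never dips below height $t$, replacing an $\mathsf{a}_\ell$ step by a tent, or contracting such a tent back to an $\mathsf{a}_\ell$ step, preserves both the endpoints and the condition of staying weakly above the $x$-axis. Expanding the substituted weight of each $\mathsf{a}_\ell$ step therefore rewrites the specialized $N_n$ as a signed generating function over pairs $(\omega',M)$, where $\omega'$ is a path in $\mathcal{B}_n$ using only the steps $\mathsf{a}_0,\dots,\mathsf{a}_{\ell-1},\mathsf{a}_{\ell+1}$, $M$ is a set of pairwise disjoint tents in $\omega'$ (the images of the original $\mathsf{a}_\ell$ steps), the weight is the ordinary monomial weight of $\omega'$, and the sign is $(-1)^{|M|}$. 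Contracting the tents of $M$ recovers the original path together with its $\mathsf{a}_\ell$ steps, so this correspondence is a bijection and the signs and weights match.

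Finally I would build a fixed-point-free, sign-reversing involution on these pairs by toggling one canonical tent. Since $n\geq\ell$, every $\omega'$ contains at least one up-step; consider its \emph{last} up-step $\mathsf{a}_i$. All steps after it are down-steps, and as the path ends on the $x$-axis there are at least $\ell-i$ of them, so the last up-step together with the $\ell-i$ down-steps immediately following it forms a distinguished tent $T$ depending only on $\omega'$. The involution sends $(\omega',M)$ to $(\omega',M\mathbin{\triangle}\{T\})$: it flips the sign, and since $T$ is disjoint from every tent different from itself (two distinct tents cannot overlap, because the up-step of one cannot occur among the down-steps of the other), it keeps $M$ a set of disjoint tents whose contraction is still a valid path. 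Hence the signed sum cancels completely and $\Phi\mid N_n$. The main obstacle is exactly this last step: one must verify carefully that the canonical tent $T$ is always present (which is where the hypothesis $n\geq\ell$ enters, guaranteeing an up-step) and that adding or removing it never destroys the disjointness of $M$ nor the admissibility of the contracted path. Establishing this existence-of-a-tent analysis together with the disjointness lemma is the technical heart of the argument.
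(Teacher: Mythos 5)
Your proposal is correct, but it proves the theorem by a genuinely different route than the paper. The paper obtains the statement as an immediate specialization ($b_{i,j}:=a_i$, $c_j:=a_{\ell+1}$) of Theorem \ref{theorem:BS}, whose proof rests on the weight-preserving bijection $\theta\colon\mathcal{B}_{s,t+1}^r\to\mathcal{M}_{s,t}^r\times\mathcal{B}_{t,t+1}^0$ of Lemma \ref{lemma:BM}: every big path factors uniquely into a marked path and one final ``tent'' from $\mathcal{B}_{t,t+1}^0=\{\mathsf{a}_i\mathsf{a}_{\ell+1}^{\ell-i}\mid 0\leq i\leq\ell\}$, whose generating function specializes to exactly $\sum_{i=0}^{\ell}a_ia_{\ell+1}^{\ell-i}$, so $N_n$ is exhibited \emph{multiplicatively} as this factor times an explicit polynomial cofactor (a marked/small-Schr\"oder generating function). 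You instead reduce divisibility to the vanishing of $N_n$ under the substitution $a_\ell\mapsto-\sum_{i=0}^{\ell-1}a_ia_{\ell+1}^{\ell-i}$ (a valid use of the factor theorem, since the divisor is monic of degree one in $a_\ell$) and then kill the resulting signed sum over tent-decorated $\mathsf{a}_\ell$-free paths by toggling the canonical tent at the last up-step; the hypothesis $n\geq\ell$ enters precisely to guarantee that an up-step exists, and your disjointness argument (an up-step can never lie among another tent's down-steps) is sound, so the involution is well defined, fixed-point-free, and sign-reversing. Your argument is self-contained and more elementary for this single statement --- it needs neither marked nor small $\ell$-Schr\"oder paths nor Theorem \ref{theorem:BS} --- but it only certifies divisibility, whereas the paper's factorization identifies the quotient explicitly, and that stronger multiplicative information is reused later (in Lemma \ref{lemma:PiOmega} and Theorems \ref{theorem:Pi_explicit} and \ref{corollary:det_moment}), which is why the paper routes the proof through Theorem \ref{theorem:BS}.
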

\begin{theorem}
	\label{corollary:det_moment}
	The following block Hankel determinant of $\ell$-Narayana polynomials can be factorized as
	\begin{align}
		\det(N_{\ell(i+1)+j})_{i,j=0}^{n-1}=
		a_{\ell+1}^{\binom{n}{2}}
		\left(\sum_{i=0}^\ell a_ia_{\ell+1}^{\ell-i}\right)^{\binom{n+1}{2}},
	\end{align}
	where $a_0\triangleq 1$.
\end{theorem}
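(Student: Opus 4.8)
The plan is to read the block Hankel determinant as an instance of the Lindström--Gessel--Viennot (LGV) lemma for weighted $\ell$-Schr\"oder paths, and then to evaluate the resulting non-intersecting generating function with the closed form established in Theorem~\ref{theorem:Pi_explicit}. By Theorem~\ref{theorem:main1} specialized to constant coefficients we have $N_m = w(\mathcal{B}_m)$, so each entry $N_{\ell(i+1)+j}$ is itself a weighted sum over single $\ell$-Schr\"oder paths. The strategy is thus: (i) exhibit source and sink families whose one-path transfer weights are exactly the entries $N_{\ell(i+1)+j}$; (ii) apply LGV to turn the determinant into a signed sum over $n$-tuples of paths; (iii) show the geometry forces only the identity permutation to survive, so that the determinant equals the generating function of a non-intersecting tuple; (iv) identify this tuple with the configuration $\Pi$ of Figure~\ref{fig:PiOmega} and quote Theorem~\ref{theorem:Pi_explicit}.

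For step (i) I would place the $i$-th source at $A_i=(-\ell(i+1),\,\ell-1)$ and the $j$-th sink at $B_j=(j,\,\ell-1-(j\bmod\ell))$. A lattice path from $A_i$ to $B_j$ built from the steps $\mathsf{a}_0,\ldots,\mathsf{a}_{\ell+1}$ then has width $\ell(i+1)+j$ and vertical drop $j\bmod\ell$, which matches the endpoint data of $\mathcal{B}_{\ell(i+1)+j}$ after translation; since the step weights depend only on the height of the starting point, the translate carries the same weight, so the weighted count of admissible paths is $N_{\ell(i+1)+j}$. Because every step moves weakly to the right and the sources (resp.\ sinks) are sorted by $x$-coordinate, the configuration is $D$-compatible, so for any non-identity permutation $\sigma$ every tuple $A_i\to B_{\sigma(i)}$ must contain a crossing; after the sign-reversing cancellation of intersecting tuples only the non-intersecting, identity-matched tuples remain, and the determinant equals their generating function. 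A quick check at $n=1$ is reassuring: the determinant is $N_\ell$, and a direct enumeration (first step $\mathsf{a}_i$ followed by $\ell-i$ copies of $\mathsf{a}_{\ell+1}$) gives $N_\ell=\sum_{i=0}^{\ell}a_i a_{\ell+1}^{\ell-i}$, agreeing with the $n=1$ value of the claimed formula and with the divisibility of Theorem~\ref{theorem:Narayana_root}.

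The main obstacle is the floor. The quantity $N_{\ell(i+1)+j}$ counts paths that stay weakly above the $x$-axis, but in the LGV picture a single horizontal floor cannot simultaneously reproduce this constraint for every column, since the prescribed drop $j\bmod\ell$ varies with $j$ modulo $\ell$; with the sources on the line $y=\ell-1$ the induced floor matches that of $\mathcal{B}$ only in the columns with $j\equiv-1\pmod\ell$ and is too low otherwise. This is exactly the difficulty that the small $\ell$-Schr\"oder paths are designed to absorb: following Eu and Fu, the lowermost path of the tuple is made to play the role of the boundary, and the over-counted configurations are matched against tuples built from small $\ell$-Schr\"oder paths. I would therefore carry out the identification of step (iv) at the level of the two-family description already set up in this chapter, so that the non-intersecting tuple produced by LGV is precisely the object $\Pi$ whose weight is computed in Theorem~\ref{theorem:Pi_explicit}.

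With that identification in hand the proof closes immediately: Theorem~\ref{theorem:Pi_explicit} evaluates the generating function of the tuple in closed product form, which upon specializing the step weights to $a_1,\ldots,a_{\ell+1}$ reads $a_{\ell+1}^{\binom{n}{2}}\bigl(\sum_{i=0}^{\ell}a_i a_{\ell+1}^{\ell-i}\bigr)^{\binom{n+1}{2}}$, giving the stated value of $\det(N_{\ell(i+1)+j})_{i,j=0}^{n-1}$. Equivalently, one may verify the single ratio $\det_n/\det_{n-1}=a_{\ell+1}^{\,n-1}\bigl(\sum_{i=0}^{\ell}a_i a_{\ell+1}^{\ell-i}\bigr)^{\,n}$ and induct from the base case $\det_1=\sum_{i=0}^{\ell}a_i a_{\ell+1}^{\ell-i}$; the combinatorial evaluation above is what supplies this ratio. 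The delicate points to get right are the precise source/sink heights that realize the indices $\ell(i+1)+j$ and the boundary bookkeeping via small $\ell$-Schr\"oder paths, both of which rest on the constructions of this section rather than on any new ingredient.
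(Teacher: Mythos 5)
Your high-level plan --- read the determinant through the Lindstr\"om--Gessel--Viennot lemma as the generating function $v(\Pi_n)$ and then quote Theorem~\ref{theorem:Pi_explicit} --- is exactly the paper's route, and your final specialization step is fine. The genuine gap is in your step (i), and the repair you sketch does not close it. With sources $A_i=(-\ell(i+1),\ell-1)$ and sinks $B_j=(j,\ell-1-(j\bmod\ell))$, the quantity $N_{\ell(i+1)+j}$ counts paths staying weakly above the level of \emph{their own sink}, and that level varies with the column $j$; consequently there is no single family of admissible paths (no uniform floor) for which the LGV matrix of path counts has entries $N_{\ell(i+1)+j}$ --- as you yourself observe. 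At that point your proposal stops being a proof: appealing to Eu--Fu and letting ``the lowermost path play the role of the boundary'' misattributes the role of small $\ell$-Schr\"oder paths in this paper. They are used to establish the closed form of Theorem~\ref{theorem:Pi_explicit} itself (via Theorem~\ref{theorem:BS} and Lemmas~\ref{lemma:PiOmega} and \ref{lemma:OmegaPi}), not to repair a defective LGV configuration, and you give no argument that the signed path count over your configuration equals $\det(N_{\ell(i+1)+j})_{i,j=0}^{n-1}$.

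The missing idea is much simpler: transpose the matrix. Since $\det(N_{\ell(i+1)+j})_{i,j=0}^{n-1}=\det(N_{i+\ell(j+1)})_{i,j=0}^{n-1}$, you may attach the varying residues to the \emph{row} index rather than the column index, taking sources $(-i,\,i\bmod\ell)$ and sinks $((j+1)\ell,\,0)$. Then every sink lies on the $x$-axis, the floor $y\geq 0$ is uniform, and under the specialization $b_{i,j}:=a_i$, $c_j:=a_{\ell+1}$ the weight $v$ becomes translation invariant, so the $(i,j)$ entry is exactly $v\bigl(\mathcal{B}^{i\bmod\ell}_{-\lfloor i/\ell\rfloor,\,j+1}\bigr)=N_{i+\ell(j+1)}$. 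This is precisely the configuration $\Pi_n$ of the paper, the identification of the determinant with $v(\Pi_n)$ is the already-established equation \eqref{eq:detB}, and Theorem~\ref{theorem:Pi_explicit} then yields $a_{\ell+1}^{\binom{n}{2}}\bigl(\sum_{i=0}^{\ell}a_ia_{\ell+1}^{\ell-i}\bigr)^{\binom{n+1}{2}}$. (Your step (iii) is also unnecessary: $v(\Pi_n)$ is by definition the signed sum over non-intersecting path systems, so one never needs to argue that only the identity permutation survives.)
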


\begin{remark}
	Theorem \ref{theorem:Narayana_root} reduces to Proposition 2.1 in \cite{BSS} when $\ell=1$,
	which describes roots of Narayana polynomials.
\end{remark}

\subsection{Relation between big and small $\ell$-Schr\"oder paths}
We introduce two new paths and consider their relations with $\ell$-Schr\"oder paths.
Firstly, let us define a new weight $v$ on $\ell$-Schr\"oder paths, more suitable for the 
aim of this chapter.
The weight of the step $\mathsf{a}_i\ (0\leq i\leq \ell)$ from the point $(x,y)\in\mathbb{Z}^2$ is defined as 
$v(\mathsf{a}_i,(x,y))\triangleq b_{i,j}$,
and the weight of $\mathsf{a}_{\ell+1}$ from $(x,y)$ is defined as 
$v(\mathsf{a}_{\ell+1},(x,y))\triangleq c_j$,
where $j\triangleq (x+y)/\ell$.

Let us denote by $\mathcal{B}_{s,t}^r$ the set of $\ell$-Schr\"oder paths from $(s\ell-r,r)$ to $(t\ell,0)$.
The generating function for the set $\mathcal{B}_{s,t}^r$ is denoted by
$v(\mathcal{B}_{s,t}^r)\triangleq\sum_{\omega\in\mathcal{B}_{s,t}^r}v(\omega)$.

A \emph{small $\ell$-Schr\"oder path} is an $\ell$-Schr\"oder path that does not have 
any step $\mathsf{a}_i$ starting from 
in the region
$y\leq i-1$ for all 
$i=1,\ldots,\ell$.
Let us denote by $\mathcal{S}_{s,t}^r$ the set of small $\ell$-Schr\"oder paths from $(s\ell-r,r)$ to $(t\ell,0)$.
We refer to  $\ell$-Sch\"order paths as \emph{big} $\ell$-Schr\"oder paths. 
A \emph{marked $\ell$-Schr\"oder path} is a big $\ell$-Schr\"oder path with a mark on its eastern slope,
that is, an element of the set
\begin{align}
	\mathcal{M}_{s,t}^r\triangleq
	\{(\omega,m)\in\mathcal{B}_{s,t}^r\times\mathbb{N}_0\mid
	\omega \textrm{ have a step starting or ending at }(t\ell-m,m)\},
\end{align}
if $s\neq t$ or $r>0$, and $\mathcal{M}_{t,t}^{0}\triangleq\{(\varepsilon,0)\}$,
if $s=t$ and $r=0$.
See Fugire \ref{fig:MS} for examples of marked (left-side)
and small (right-side) $\ell$-Schr\"oder paths.

A weight $v$ of marked $\ell$-Schr\"oder paths is defined as, for $(\omega,m)\in\mathcal{M}_{s,t}^r$,
\begin{align}
	v((\omega,m))\triangleq v(\omega)\frac{c_{t+1}^m}{c_t^m}.
\end{align}
The generating functions of $\mathcal{S}_{s,t}^r$ and $\mathcal{M}_{s,t}^r$ are denoted by
$v(\mathcal{S}_{s,t}^r)\triangleq \sum_{\omega\in\mathcal{S}_{s,t}^r}v(\omega)$ and 
$v(\mathcal{M}_{s,t}^r)\triangleq \sum_{(\omega,m)\in\mathcal{M}_{s,t}^r}v((\omega,m))$, respectively.
The following lemmas describe the relation between generating functions 
$v(\mathcal{B}_{s,t}^r)$, $v(\mathcal{S}_{s,t}^r)$ and $v(\mathcal{M}_{s,t}^r)$.
\begin{lemma}
	\label{lemma:BM}
	For any integers $s,t$ with $s\leq t$ and  non-negative integer $r$ with $0\leq r<\ell$, we have
	\begin{align}
		v(\mathcal{B}_{s,t+1}^r)=v(\mathcal{M}_{s,t}^r)\sum_{i=0}^\ell b_{i,t}c_{t+1}^{\ell-i}.
	\end{align}
\end{lemma}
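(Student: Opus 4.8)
The plan is to establish a bijective, weight-preserving correspondence between marked $\ell$-Schr\"oder paths in $\mathcal{M}_{s,t}^r$ together with a choice of a ``final ascending step landing at height $0$'' and big $\ell$-Schr\"oder paths in $\mathcal{B}_{s,t+1}^r$. The right-hand side factor $\sum_{i=0}^\ell b_{i,t}c_{t+1}^{\ell-i}$ suggests that a path in $\mathcal{B}_{s,t+1}^r$ is obtained from a marked path by appending, at the marked point, a rising step $\mathsf{a}_i=(i,\ell-i)$ (of weight $b_{i,t}$) followed by $\ell-i$ descending steps $\mathsf{a}_{\ell+1}=(1,-1)$ (each contributing a factor $c_{t+1}$). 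First I would analyze the geometry: a path in $\mathcal{B}_{s,t+1}^r$ ends at $(( t+1)\ell,0)$, so I want to locate canonically the \emph{last} point at which the path is at height $0$ before its final descent into the column $x=(t+1)\ell$, or equivalently the last time the path leaves height $0$ on its eastern portion. Cutting the path there should produce a big $\ell$-Schr\"oder path ending at $(t\ell,0)$ (an element underlying $\mathcal{M}_{s,t}^r$) plus a ``last excursion'' consisting of one ascending step $\mathsf{a}_i$ from the marked point followed by a descent back to the axis.

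The key steps, in order, would be: (1) Given $\omega'\in\mathcal{B}_{s,t+1}^r$, identify the unique decomposition $\omega'=\omega\,\mathsf{a}_i\,\mathsf{a}_{\ell+1}^{\ell-i}$ where $\omega$ is the initial segment ending at the marked point $(t\ell-m,m)$ for the appropriate $m$, the single step $\mathsf{a}_i$ rises from that point, and the trailing block $\mathsf{a}_{\ell+1}^{\ell-i}$ descends straight to $((t+1)\ell,0)$. The index $i$ ranges over $0,\ldots,\ell$ and records which ascending step is used; the weight of $\mathsf{a}_i$ placed with starting point at height $t\ell$-level is $b_{i,t}$, while each of the $\ell-i$ descending steps sits in column-class $t+1$ and contributes $c_{t+1}$. (2) Verify that the remaining initial segment $\omega$ is genuinely a big path ending at $(t\ell,0)$ with a step touching the marked height, hence corresponds to an element of $\mathcal{M}_{s,t}^r$, and that the marking $m$ is exactly the height at which the final ascending step is launched. (3) Check that the weight-rescaling $c_{t+1}^m/c_t^m$ in the definition of $v((\omega,m))$ is precisely what corrects the descending steps of $\omega$ that originally sat below the cut — i.e. the $m$ descending steps in the last excursion region get re-weighted from $c_t$ to $c_{t+1}$, which is the content of the marked-path weight. (4) Sum over $i=0,\ldots,\ell$ to produce the factor $\sum_{i=0}^\ell b_{i,t}c_{t+1}^{\ell-i}$, and over all $(\omega,m)$ to produce $v(\mathcal{M}_{s,t}^r)$.

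The main obstacle I expect is the bookkeeping of \emph{which} descending steps get their column-class shifted and how this matches the factor $c_{t+1}^m/c_t^m$ rather than some other power. Concretely, I must argue that when I cut at the last return/mark, the final excursion is forced to be a single rise $\mathsf{a}_i$ immediately followed by a maximal run of unit descents reaching the axis, with \emph{no} intervening returns to height $0$; this is the crux of making the decomposition a \emph{bijection} and not merely a surjection. The cleanest way is to define the cut point as the last lattice point on the $x$-axis strictly before $((t+1)\ell,0)$, show the segment after it contains exactly one ascending step, and then show the mark $m$ equals the height reached after that ascending step, so that the $m$ re-weighted descents correspond bijectively to the marking. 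The condition $s\le t$ (and the boundary case $s=t,r=0$ handled separately by $\mathcal{M}_{t,t}^0=\{(\varepsilon,0)\}$) guarantees the path is long enough for this last excursion to exist, so the degenerate empty-path case must be checked to see that it still yields the correct identity, with $\omega=\varepsilon$ mapping to the pure-descent path $\mathsf{a}_i\mathsf{a}_{\ell+1}^{\ell-i}$ from $(t\ell,0)$.
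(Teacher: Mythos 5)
Your overall strategy is the right one and is the same as the paper's: both rest on a weight-preserving bijection between $\mathcal{B}_{s,t+1}^r$ and $\mathcal{M}_{s,t}^r\times\mathcal{B}_{t,t+1}^0$, where $\mathcal{B}_{t,t+1}^0=\{\mathsf{a}_i\mathsf{a}_{\ell+1}^{\ell-i}\mid 0\leq i\leq\ell\}$ carries the factor $\sum_{i=0}^\ell b_{i,t}c_{t+1}^{\ell-i}$, and the reweighting $c_{t+1}^m/c_t^m$ accounts for $m$ transferred descents (your step (3) describes this mechanism correctly). However, there is a genuine gap at exactly the point you identify as the crux: your canonical cut is wrong. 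You propose to cut at the last lattice point on the $x$-axis before $((t+1)\ell,0)$ and to show that the segment after it contains exactly one ascending step. That claim is false. For $\ell=3$, take any path ending with the tail $\mathsf{a}_0\mathsf{a}_4\mathsf{a}_1\mathsf{a}_4^4$: starting from $((t-1)\ell,0)$ it passes through heights $3,2,4,3,2,1$ before landing at $((t+1)\ell,0)$, so it stays strictly above the axis in between yet contains \emph{two} ascending steps; analogous tails exist for every $\ell\geq 1$. The correct canonical cut --- the one the paper uses --- is at the \emph{last non-descending step}: tracing $\omega'$ backward from its final step, the first step that is not $\mathsf{a}_{\ell+1}$ is some $\mathsf{a}_{\ell-k}$, giving the unique factorization $\omega'=\omega_1\mathsf{a}_{\ell-k}\mathsf{a}_{\ell+1}^{k+m}$, where $\omega_1$ ends at $(t\ell-m,m)$, the last point of $\omega'$ on the line $x+y=t\ell$. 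This is in general a cut at height $m>0$, not a cut on the axis.

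Relatedly, the decomposition $\omega'=\omega\,\mathsf{a}_i\,\mathsf{a}_{\ell+1}^{\ell-i}$ written in your step (1) is geometrically inconsistent unless $m=0$: a block $\mathsf{a}_i\mathsf{a}_{\ell+1}^{\ell-i}$ launched from $(t\ell-m,m)$ ends at $((t+1)\ell-m,m)$, not at $((t+1)\ell,0)$, so the trailing run must be $\mathsf{a}_{\ell+1}^{\ell-i+m}$. As a consequence, your steps (1) and (2) contradict each other: the initial segment ends at the marked point $(t\ell-m,m)$, so it cannot itself be the element of $\mathcal{B}_{s,t}^r$ ``ending at $(t\ell,0)$.'' The resolution, as in the paper, is that the marked big path is \emph{not} a prefix of $\omega'$: it is $\omega_1\mathsf{a}_{\ell+1}^m$, obtained by appending $m$ \emph{new} descents to the prefix $\omega_1$; these lie on $x+y=t\ell$ and have weight $c_t$, whereas the $m$ descents of $\omega'$ they replace lie on $x+y=(t+1)\ell$ and have weight $c_{t+1}$ --- precisely the discrepancy that the factor $c_{t+1}^m/c_t^m$ in the definition of $v((\omega,m))$ repairs, as you intuited. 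With the cut repositioned to the last non-descending step and the map corrected to $\omega'=\omega_1\mathsf{a}_i\mathsf{a}_{\ell+1}^{\ell-i+m}\mapsto\bigl((\omega_1\mathsf{a}_{\ell+1}^m,m),\,\mathsf{a}_i\mathsf{a}_{\ell+1}^{\ell-i}\bigr)$, your argument becomes the paper's proof.
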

\begin{proof}
	The proof is based on a weight-preserving bijection
	$\theta\colon\mathcal{B}_{s,t+1}^r\to\mathcal{M}_{s,t}^r\times\mathcal{B}_{t,t+1}^0$.
	The map $\theta$ is defined as follows.
	Let $\omega\in\mathcal{B}_{s,t+1}^r$.
	Tracing the path $\omega$ backward from the final step, there is a step $\mathsf{a}_{\ell-k}$
	other than $\mathsf{a}_{\ell+1}$ that is encountered first.
	Hence we can write uniquely as 
	$\omega=\omega_1\mathsf{a}_{\ell-k}\mathsf{a}_{\ell+1}^{k+m}$ for some path $\omega_1$
	and non-negative integers $m,k$.
	Define $\theta$ as 
	$\theta(\omega)\triangleq((\omega_1\mathsf{a}_{\ell}^m,m),\mathsf{a}_{\ell-k}\mathsf{a}_{\ell+1}^{k})
	\in\mathcal{M}_{s,t}^r\times\mathcal{B}_{t,t+1}^0$.
\end{proof}
\begin{figure}[htbp]
  \begin{minipage}[b]{0.4\linewidth}
    \centering
	\includegraphics{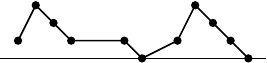}
  \end{minipage}
  \begin{minipage}[b]{0.15\linewidth}
    \centering
	$\overset{\theta}{\underset{1:1}{\longrightarrow}}$
\end{minipage}
  \begin{minipage}[b]{0.4\linewidth}
    \centering
	\includegraphics{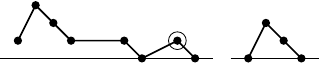}
  \end{minipage}
  \caption{
	  Left: a big $3$-Schr\"oder path $\omega\in\mathcal{B}_{0,5}^{1}$.
  Right: the pair $\theta(\omega)=((\omega',m),\omega'')\in\mathcal{M}_{0,4}^1\times\mathcal{B}_{4,5}^0$}
  \label{fig:BM}
\end{figure}
Figure \ref{fig:BM} shows how the bijection $\theta$ in Lemma \ref{lemma:BM} works.
The left side is a big $3$-Schr\"oder path $\omega\in\mathcal{B}_{0,5}^1$ with steps
$\omega=\mathsf{a}_1\mathsf{a}_4^2\mathsf{a}_3\mathsf{a}_4\mathsf{a}_2\mathsf{a}_1\mathsf{a}_4^3$.
The right side is a pair $((\omega',m),\omega'')$ of a
marked $3$-Schr\"oder path $(\omega',m)\in\mathcal{M}_{0,4}^1$ and 
a big $3$-Schroder path $\omega''\in\mathcal{B}_{4,5}^0$,
where the steps are
$\omega'=\mathsf{a}_1\mathsf{a}_4^2\mathsf{a}_3\mathsf{a}_4\mathsf{a}_2\mathsf{a}_4$, 
$\omega''=\mathsf{a}_1\mathsf{a}_4^2$. and $m=1$.
The bijection $\theta$ maps $\omega$ onto $((\omega',m),\omega'')$.

\begin{lemma}
	\label{lemma:MS}
	For any integers $s,t$ with $s\leq t$ and  non-negative integer $r$ with $0\leq r<\ell$, we have
	\begin{align}
		\label{eq:MS}
		v(\mathcal{M}_{s,t}^r)=
		c_{t+1}^{-\ell}\sum_{i=0}^rc_s^{r-i}
		\left(v(\mathcal{S}_{s,t}^{i+\ell})\mid_{c_{k}:=c_{k+1}}\right).
	\end{align}
	The symbol $v(\mathcal{S}_{s,t}^{i+\ell})|_{c_{k}:=c_{k+1}}$ in \eqref{eq:MS} represents
	the generating function $v(\mathcal{S}_{s,t}^{i+\ell})$ in which all the variables $c_{k}$
	replaced by the variable $c_{k+1}$ 
	simultaneously
	for all $k\in\mathbb{Z}$.
\end{lemma}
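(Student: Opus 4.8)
The plan is to prove \eqref{eq:MS} by an explicit weight‑preserving bijection between marked big paths and small paths with a raised starting point. First I would record the geometry under the grading $j\triangleq(x+y)/\ell$: an up‑step $\mathsf{a}_i$ increases $j$ by $1$ while the down‑step $\mathsf{a}_{\ell+1}$ fixes $j$, so $j$ is non‑decreasing along any path and increases exactly at the $t-s$ up‑steps. Hence the portion at the top level $j=t$ is a pure staircase of $\mathsf{a}_{\ell+1}$‑steps, the eastern slope, descending from the endpoint of the last up‑step, at some height $M$, down to $(t\ell,0)$. The mark of an element of $\mathcal{M}_{s,t}^r$ therefore sits at one of the heights $m=0,1,\dots,M$, and since $v((\omega,m))=v(\omega)c_{t+1}^{m}/c_{t}^{m}$, summing over the admissible $m$ replaces the factor $c_t^{M}$ in $v(\omega)$ by $\sum_{m=0}^{M}c_t^{M-m}c_{t+1}^{m}$; that is, the mark selects how many of the eastern steps are re‑weighted from $c_t$ to $c_{t+1}$.

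The second preliminary observation explains the prefactor $c_{t+1}^{-\ell}$. In a non‑trivial small path the last up‑step $\mathsf{a}_j$ must start at height $\ge j$, hence ends at height $\ge\ell$, so its eastern slope has $M\ge\ell$ steps. Thus $v(\mathcal{S}_{s,t}^{i+\ell})\mid_{c_k:=c_{k+1}}$ is always divisible by $c_{t+1}^{\ell}$, and multiplying by $c_{t+1}^{-\ell}$ simply strips off the $\ell$ lowest eastern steps. The core of the argument is then a bijection $\Phi\colon\mathcal{M}_{s,t}^r\to\bigsqcup_{i=0}^{r}\mathcal{S}_{s,t}^{i+\ell}$ realizing the identity term by term, namely $v((\omega,m))=c_{t+1}^{-\ell}c_s^{r-i}\bigl(v(\Phi(\omega,m))\mid_{c_k:=c_{k+1}}\bigr)$ when $\Phi(\omega,m)$ lands in $\mathcal{S}_{s,t}^{i+\ell}$. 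Geometrically $\Phi$ re‑levels the path so that every $\mathsf{a}_{\ell+1}$‑step drops one level in its $c$‑index — this is exactly the substitution $c_k\mapsto c_{k+1}$ — while leaving each up‑step at its original level so the $b$‑weights are untouched; the re‑leveling raises the starting height enough that each $\mathsf{a}_j$ now begins at height $\ge j$, so the image genuinely lies in the small class, which is precisely why the target superscripts carry the extra $+\ell$.

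I would make the inverse explicit to pin down the bookkeeping. The mark is recovered from the eastern slope of the image: a target small path with eastern height $M'\ge\ell$ corresponds to $m=M'-\ell$, and the $c_{t+1}^{-\ell}$ leaves exactly $m$ top‑level down‑steps of weight $c_{t+1}$, matching the $m$ re‑weighted eastern steps of $\omega$, while the eastern steps of $\omega$ below the mark re‑level into lower down‑steps of weight $c_t$ (after substitution). The index $i$ and the monomial $c_s^{r-i}$ are the western counterpart: the initial level‑$s$ descent of $\omega$ — a feature available only to big paths, which may start as low as height $r<\ell$ — is apportioned so that $r-i$ of those $c_s$‑steps are peeled off as the factor $c_s^{r-i}$ and the remainder raises the start to height $\ell+i$, fixing the target block. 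Summing over $\mathcal{M}_{s,t}^r$ and collecting by $i$ then yields the right‑hand side of \eqref{eq:MS}. The degenerate cases should be checked directly against the conventions: when $s=t$ one has $\mathcal{M}_{t,t}^{0}=\{(\varepsilon,0)\}$ and the targets are straight descents, and the boundary $r=0$ forces the single index $i=0$ with $c_s^{0}=1$.

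The main obstacle I anticipate is defining $\Phi$ so that the three operations — re‑leveling the down‑steps, transferring the mark from the eastern to the western side, and raising the starting height — are simultaneously consistent with the path never dropping below the $x$‑axis and with exact weight bookkeeping. The delicate point is the apportioning of the raise controlled by the mark: one must show it is invertible and hits each block $\mathcal{S}_{s,t}^{i+\ell}$ exactly once with the correct factor $c_s^{r-i}$. This is where the hypothesis $0\le r<\ell$ enters, guaranteeing that the raised start heights $\ell+i$ for $i=0,\dots,r$ sweep out a window matching the admissible mark positions, with no overlap or omission between the summands.
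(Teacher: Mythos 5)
Your plan follows the paper's proof in outline: you peel off the initial run of $\mathsf{a}_{\ell+1}$-steps to extract the factors $c_s^{r-i}$ (the paper formalizes this as the decomposition $\mathcal{M}_{s,t}^r=\bigsqcup_{i=0}^{r}\mathsf{a}_{\ell+1}^{r-i}\mathcal{N}_{s,t}^{i}$, where $\mathcal{N}_{s,t}^{i}$ consists of the marked paths whose first step is not $\mathsf{a}_{\ell+1}$), and you then want a weight-controlled bijection from each such block onto $\mathcal{S}_{s,t}^{i+\ell}$; your side observations (every nonempty small path has an eastern slope of length at least $\ell$, so the prefactor $c_{t+1}^{-\ell}$ is harmless; the mark should be recoverable as $m=M'-\ell$) are correct and consistent with that plan. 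The gap is that the bijection itself is never constructed. What you give for $\Phi$ is a list of desiderata --- ``re-level every down-step one $c$-index lower, keep every up-step at its level, raise the start'' --- and the term-by-term weight identity, the fact that the image stays above the $x$-axis, and the fact that the image actually satisfies the small-path condition are all asserted rather than derived; you say so yourself in your last paragraph (``the main obstacle I anticipate is defining $\Phi$''). Since this bijection is the entire content of the lemma, the proposal is a proof plan rather than a proof.

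The missing construction is in fact forced by your own desiderata, which is worth seeing. The image must start on the same anti-diagonal (grading $s$), and only up-steps change the grading $j=(x+y)/\ell$; so keeping each up-step at its grading while pushing each down-step one grading lower means each maximal run of down-steps must be moved from after its preceding up-step to before it. This is exactly the paper's map: after the peeling, the pre-mark portion has a unique factorization $\omega_1=\alpha_1\cdots\alpha_d$ into blocks $\alpha_p$ of the form $\mathsf{a}_j\mathsf{a}_{\ell+1}^{k}$ (this is where ``first step is not $\mathsf{a}_{\ell+1}$'' is used), and one sets $\omega'\triangleq\rt(\alpha_1)\cdots\rt(\alpha_d)\,\omega_2\,\mathsf{a}_{\ell+1}^{\ell}$, started at $(s\ell-i-\ell,\,i+\ell)$, where $\rt$ cycles the first letter of a word to the end. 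With this in hand, the points you flagged as delicate become short checks: if a block starts at height $h$ and ends at height $h'\ge 0$ in $\omega$, then in $\omega'$ its $k$ down-steps run from $h+\ell$ down to $h'+j$ and its up-step $\mathsf{a}_j$ starts at height $h'+j\ge j$, so non-negativity of the original path automatically yields both non-negativity and the small-path condition for the image; the up-steps keep their $b$-weights while the rotated down-steps drop one grading, so the substitution $c_k:=c_{k+1}$ restores their weights; the trailing $\omega_2\mathsf{a}_{\ell+1}^{\ell}$ sits at grading $t$ and contributes $c_{t+1}^{m+\ell}$ after substitution, which matches the marked re-weighting $c_{t+1}^{m}/c_{t}^{m}$ once the prefactor $c_{t+1}^{-\ell}$ is applied; and invertibility is immediate because block rotation is invertible and the mark is read off as the eastern-slope length of the image minus $\ell$, exactly as you predicted.
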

\begin{proof}
	The statement is obvious if $s=t$.
	Let us consider the case when $s < t$.
	We decompose the set $\mathcal{M}_{s,t}^r$ into $r+1$ disjoint subsets based on
	the number of steps $\mathsf{a}_{\ell+1}$
	placed at the beginning of paths.
	Let $\mathcal{N}_{s,t}^r$ be the subset of $\mathcal{M}_{s,t}^r$ defined as 
	\begin{align}
		\mathcal{N}_{s,t}^r\triangleq
		\{(\omega,m)\mid (\omega,m)\in\mathcal{M}_{s,t}^r,\ {\rm the\ first\ step\ of\ } \omega
		{\rm\ is\ not\ }\mathsf{a}_{\ell+1}\}.
	\end{align}
	Using $\mathcal{N}_{s,t}^r$, the set $\mathcal{M}_{s,t}^r$ can be decomposed as
	\begin{align}
		\label{eq:MN}
		\mathcal{M}_{s,t}^r=\bigsqcup_{i=0}^r\mathsf{a}_{\ell+1}^{r-i}\mathcal{N}_{s,t}^i,
	\end{align}
	where $\mathsf{a}_{\ell+1}^{r-i}\mathcal{N}_{s,t}^i$ denotes the set of 
	marked $\ell$-Schr\"oder paths in $\mathcal{M}_{s,t}^r$ whose first $r-i$ steps are $\mathsf{a}_{\ell+1}$ and
	$(r-i+1)$-th step is not $\mathsf{a}_{\ell+1}$.
	From \eqref{eq:MN}, writing the generating function of $\mathcal{N}_{s,t}^r$ as 
	$v(\mathcal{N}_{s,t}^r)\triangleq\sum_{(\omega,m)\in\mathcal{N}_{s,t}^r}v((\omega,m))$,
	we have
	\begin{align}
		\label{eq:MNwt}
		v(\mathcal{M}_{s,t}^r)=\sum_{i=0}^rc_s^{r-i}v(\mathcal{N}_{s,t}^i).
	\end{align}
	Our next step is to construct 
	a bijection that sends $(\omega,m)\in\mathcal{N}_{s,t}^i$
	to
	$\omega'\in\mathcal{S}_{s,t}^{i+\ell}$ for $0\leq i\leq r$ that satisfies
	\begin{align}
		\label{eq:NSwt}
		v((\omega,m))
		=
		\left.\left(\frac{1}{c_{t}^\ell}v(\omega')\right)\middle|_{c_{k}:=c_{k+1}}\right..
	\end{align}
	If such a bijection exists, then by summing \eqref{eq:NSwt} over $\mathcal{N}_{s,t}^r$ we have
	\begin{align}
		v(\mathcal{N}_{s,t}^i)=
		c_{t+1}^{-\ell}\left(v(\mathcal{S}_{s,t}^{i+\ell})\mid_{c_k:=c_{k+1}}\right)
	\end{align}
	and, together with \eqref{eq:MNwt}, we get \eqref{eq:MS}.
	The bijection is constructed as follows.

	We split the marked path $(\omega,m)\in\mathcal{N}_{s,t}^i$ into two paths,
	before and after the mark.
	The former is a path from $(s\ell-i,i)$ to $(t\ell-m,m)$, and the latter 
	is a path from $(t\ell-m,m)$ to $(t\ell,0)$.
	Let us denote the former as $\omega_1$ and the latter as $\omega_2$.
	The path $\omega_1$ has a unique decomposition as a sequence of the elements of
	$\{\mathsf{a}_j\mathsf{a}_{\ell+1}^k\mid 0\leq j\leq \ell, k\geq 0\}$.
	Let us write this decomposition as $\omega_1=\alpha_1\cdots\alpha_d$.
	We define $\omega'$ as the path from $(s\ell-i-\ell,i+\ell)$ with steps
	\begin{align}
		\omega'\triangleq\rt(\alpha_1)\cdots\rt(\alpha_d)\omega_2\mathsf{a}_{\ell+1}^\ell,
	\end{align}
	where $\rt(v_1v_2\cdots v_n)\triangleq v_2\cdots v_nv_1$.
	This map $\mathcal{N}_{s,t}^i\ni(\omega,m)\mapsto\omega'\in\mathcal{S}_{s,t}^{i+\ell}$ is a bijection
	and it satisfies \eqref{eq:NSwt}.
\end{proof}
\begin{figure}[htbp]
  \begin{minipage}[b]{0.4\linewidth}
    \centering
	\includegraphics{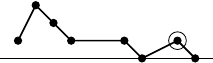}
  \end{minipage}
  \begin{minipage}[b]{0.15\linewidth}
    \centering
	$\underset{1:1}{\longrightarrow}$
\end{minipage}
  \begin{minipage}[b]{0.4\linewidth}
    \centering
	\includegraphics{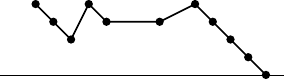}
  \end{minipage}
  \caption{Left: a marked $3$-Schr\"oder path in $\mathcal{N}_{0,4}^{1}$.
  Right: a small $3$-Schr\"oder path in $\mathcal{S}_{0,4}^4$.}
  \label{fig:MS}
\end{figure}
Figure \ref{fig:MS} shows an example of how the bijection in Lemma \ref{lemma:MS} works.
The left side is a marked $3$-Schr\"oder path $(\omega,1)\in\mathcal{N}_{0,4}^1$ with steps
$\omega=\mathsf{a}_1\mathsf{a}_4^2\mathsf{a}_3\mathsf{a}_4\mathsf{a}_2\mathsf{a}_4$.
The decomposition of $\omega$ is as $\omega=(\mathsf{a}_1\mathsf{a}_4^2)(\mathsf{a}_3\mathsf{a}_4)
(\mathsf{a}_2)\mathsf{a}_4$.
Therefore, the bijection maps $(\omega,1)$ onto the small $3$-Schr\"oder paths $\omega'\in\mathcal{S}_{0,4}^4$
on the right side of the figure.
The steps of $\omega'$ are
$\omega'=\mathsf{a}_4^2\mathsf{a}_1\mathsf{a}_4\mathsf{a}_3\mathsf{a}_2\mathsf{a}_4^4$.

Combining Lemma \ref{lemma:BM} and Lemma \ref{lemma:MS}, we obtain the following.
\begin{theorem}
	\label{theorem:BS}
	For any integers $s,t$ with $s\leq t$ and non-negative integer $r$ with $0\leq r<\ell$, we have
	\begin{align}
		\label{eq:BS}
		v(\mathcal{B}_{s,{t+1}}^r)=
		c_{t+1}^{-\ell}
		\left(\sum_{i=0}^rc_s^{r-i}\left(v(\mathcal{S}_{s,t}^{i+\ell})\middle|_{c_k:=c_{k+1}}\right)\right)
		\left(\sum_{i=0}^\ell b_{i,t}c_{t+1}^{\ell-i}\right).
	\end{align}
\end{theorem}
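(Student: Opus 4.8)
The plan is to derive the identity directly from the two preceding lemmas by substitution, since all the genuine combinatorial content---the bijection $\theta$ of Lemma \ref{lemma:BM} and the rotation-based bijection of Lemma \ref{lemma:MS}---has already been established, and nothing new about paths needs to be constructed here. I would first note that all three statements (Lemma \ref{lemma:BM}, Lemma \ref{lemma:MS}, and the present theorem) carry exactly the same hypotheses, namely $s\leq t$ and $0\leq r<\ell$, so the two lemmas may be chained without any compatibility check on the indices.

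First I would invoke Lemma \ref{lemma:BM}, which factors the big-path generating function through the marked-path one:
\[
v(\mathcal{B}_{s,t+1}^r)=v(\mathcal{M}_{s,t}^r)\sum_{i=0}^\ell b_{i,t}c_{t+1}^{\ell-i}.
\]
The second factor is precisely the rightmost factor of the target identity \eqref{eq:BS}, so it is carried along untouched. Next I would substitute the expression for $v(\mathcal{M}_{s,t}^r)$ furnished by Lemma \ref{lemma:MS} (equation \eqref{eq:MS}),
\[
v(\mathcal{M}_{s,t}^r)=c_{t+1}^{-\ell}\sum_{i=0}^r c_s^{r-i}\left(v(\mathcal{S}_{s,t}^{i+\ell})\middle|_{c_k:=c_{k+1}}\right),
\]
into the previous display. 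Collecting the prefactor $c_{t+1}^{-\ell}$ in front and keeping the $b$-sum on the right reproduces \eqref{eq:BS} verbatim.

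Because the argument is a purely formal composition of two proved equalities, I expect no real obstacle at this step; the difficulty was already discharged in the lemmas. The only points demanding care are bookkeeping: transcribing the prefactor $c_{t+1}^{-\ell}$ and the simultaneous replacement rule $c_k:=c_{k+1}$ exactly as they appear in Lemma \ref{lemma:MS}, and keeping the two summation indices conceptually distinct---the sum $\sum_{i=0}^\ell b_{i,t}c_{t+1}^{\ell-i}$ ranges over step types while the sum $\sum_{i=0}^r c_s^{r-i}$ ranges over the number of leading $\mathsf{a}_{\ell+1}$ steps. Since these two sums are multiplied rather than composed, no interaction between the two occurrences of the index arises, and the conclusion follows immediately.
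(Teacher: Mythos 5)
Your proposal is correct and matches the paper's own derivation exactly: the paper obtains Theorem \ref{theorem:BS} precisely by substituting the expression for $v(\mathcal{M}_{s,t}^r)$ from Lemma \ref{lemma:MS} into the factorization of Lemma \ref{lemma:BM}, with no additional argument needed. Your observation that the hypotheses of the two lemmas coincide with those of the theorem is the only compatibility check required, and you handled it properly.
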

We obtain Theorem \ref{theorem:Narayana_root} mentioned at the beginning of the chapter
by setting the variables 
in Theorem \ref{theorem:BS} 
as $b_{i,j}:=a_i$ for $i=0,\ldots,\ell+1$ and $c_j:=a_{\ell+1}$.
Also by $b_{i,j}:=1$ and $c_j:=1$ for all $i,j\in\mathbb{Z}$ in
Theorem \ref{theorem:BS}, we obtain the following.
\begin{corollary}
	\label{corollary:BS}
	For any integers $s,t$ with $s\leq t$ and non-negative integer $r$ with $0\leq r<\ell$, we have
	\begin{align}
		|\mathcal{B}_{s,t+1}^r|=(\ell+1)\sum_{i=0}^r|\mathcal{S}_{s,t}^{i+\ell}|.
	\end{align}
\end{corollary}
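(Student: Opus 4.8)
The plan is to derive the claim directly from Theorem~\ref{theorem:BS} via the specialization announced immediately before the statement, namely setting $b_{i,j}:=1$ and $c_j:=1$ for all $i,j\in\mathbb{Z}$. The key observation is that under this specialization every step of every path carries weight $1$; since the weight of a path is the product of its step weights, each path has weight $1$. Hence every weighted generating function collapses to a plain cardinality, giving $v(\mathcal{B}_{s,t+1}^r)=|\mathcal{B}_{s,t+1}^r|$ and $v(\mathcal{S}_{s,t}^{i+\ell})=|\mathcal{S}_{s,t}^{i+\ell}|$. So the whole task reduces to evaluating the right-hand side of \eqref{eq:BS} under this substitution.

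First I would read off what each of the three factors in \eqref{eq:BS} becomes. The prefactor $c_{t+1}^{-\ell}$ equals $1$. Inside the middle sum, each coefficient $c_s^{r-i}$ equals $1$, and the inner term $v(\mathcal{S}_{s,t}^{i+\ell})\mid_{c_k:=c_{k+1}}$ reduces to $|\mathcal{S}_{s,t}^{i+\ell}|$, so the middle sum becomes $\sum_{i=0}^r|\mathcal{S}_{s,t}^{i+\ell}|$. The final factor evaluates as $\sum_{i=0}^\ell b_{i,t}c_{t+1}^{\ell-i}=\sum_{i=0}^\ell 1=\ell+1$. Multiplying the three pieces yields $(\ell+1)\sum_{i=0}^r|\mathcal{S}_{s,t}^{i+\ell}|$, which matches the left-hand side $|\mathcal{B}_{s,t+1}^r|$ and is exactly the assertion.

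Because the corollary is a pure specialization of the already-proven identity \eqref{eq:BS}, there is essentially no obstacle beyond bookkeeping. The one point that deserves a moment's care is the interaction between the formal relabeling $c_k:=c_{k+1}$ occurring inside the theorem and the numerical specialization $c_j:=1$: the relabeling is a substitution of indeterminates applied \emph{before} the variables are set to $1$, so after setting all $c_j$ to $1$ it becomes vacuous and cannot alter the count. Once this is noted, the identity follows by inspection, and no further computation is required.
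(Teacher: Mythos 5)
Your proposal is correct and is exactly the paper's own argument: the paper obtains the corollary by setting $b_{i,j}:=1$ and $c_j:=1$ in Theorem~\ref{theorem:BS}, just as you do, with your bookkeeping (weights collapsing to cardinalities, the last factor summing to $\ell+1$, the relabeling $c_k:=c_{k+1}$ becoming vacuous) simply spelling out what the paper leaves implicit.
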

Corollary \ref{corollary:BS} is reduced to the well-known fact that 
the $n$-th big Schr\"oder number is double the
$n$-th small Schr\"oder number when $\ell=1$ (cf. \cite{YJ}).
\subsection{Generating functions for non-intersecting tuplets of $\ell$-Schr\"oder paths}
In this subsection, we show that some generating functions of non-intersecting 
tuplets of
$\ell$-Schr\"oder paths are factorized nicely, as in the case of $\ell=1$.
We generalize the technique developed by Eu and Fu \cite{EuFu} for Schr\"oder paths.
Theorem \ref{theorem:BS} plays an essential role in the proof of the factorization.

Let us define some terms that we need for non-intersecting paths.
Let $\mathbf{s}=(s_0,\ldots,s_{n-1})$ and $\mathbf{t}=(t_0,\ldots,t_{n-1})$ be tuplets of 
points in $\mathbb{Z}^2$.
A \emph{path system} of size $n$ from $\mathbf{s}$ to $\mathbf{t}$ is a tuplet of paths
$(\omega_0,\ldots,\omega_{n-1})$
such that $\omega_i$ is from $s_i$ to $t_{\sigma(i)}$
for $0\leq i\leq n-1$, where $\sigma$ is a permutation on $\{0,1,\ldots,n-1\}$.
A path system is said to be \emph{non-intersecting} 
if the paths are pairwise disjoint, i.e., 
if no steps in the paths share their starting or ending point.
The weight $v(\pmb{\omega})$ of a path system $\pmb{\omega}=(\omega_0,\ldots,\omega_{n-1})$
is defined as 
$v(\pmb\omega)\triangleq\sign(\sigma)v(\omega_0)\cdots v(\omega_{n-1})$.

Let us denote as $\Pi_n$ the set of non-intersecting path systems of $n$ $\ell$-Schr\"oder paths
from $\mathbf{s}$ to $\mathbf{t}$ with
\begin{align}
	\mathbf{s}&\triangleq((-i,i\bmod \ell))_{i=0,\ldots,n-1}\qquad\textrm{and}\\
	\mathbf{t}&\triangleq(((i+1)\ell,0))_{i=0,\ldots,n-1},
\end{align}
where $i\bmod\ell$ denotes the remainder of dividing $i$ by $\ell$.
Also, denote as $\Omega_n$ the set of non-intersecting systems of $n$
small $\ell$-Schr\"oder paths from $\mathbf{s}$ to $\mathbf{t}$ with
\begin{align}
	\mathbf{s}&\triangleq((-i,(i\bmod \ell)+\ell))_{i=0,\ldots,n-1}\qquad\textrm{and}\\
	\mathbf{t}&\triangleq((i\ell,0))_{i=0,\ldots,n-1}.
\end{align}
See Figure \ref{fig:PiOmega} for examples of non-intersecting paths in 
$\Pi_n$ (left side) and $\Omega_n$ (right side).
Note that although a path system is non-intersecting, lines may cross when drawn in a plane.

Let us write the generating function of $\Pi_n$ and $\Omega_n$ as
\begin{align}
	v(\Pi_n)\triangleq\sum_{\pmb\omega\in\Pi_n}v(\pmb\omega),\qquad
	v(\Omega_n)\triangleq\sum_{\pmb\omega\in\Omega_n}v(\pmb\omega).
\end{align}
These generating functions have the following relations.
\begin{lemma}
	\label{lemma:PiOmega}
	For any non-negative integer $n$, we have
	\begin{align}
		\label{eq:PiOmega}
		v(\Pi_n)=\left(v(\Omega_n)\middle|_{c_k:=c_{k+1}}\right)\times
		\prod_{t=0}^{n-1}\left(c_{t+1}^{-\ell}\sum_{i=0}^{\ell}b_{i,t}c_{t+1}^{\ell-i}\right).
	\end{align}
\end{lemma}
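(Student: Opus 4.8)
The plan is to prove the identity $v(\Pi_n)$ versus $v(\Omega_n)$ by establishing a weight-preserving bijection between the non-intersecting systems in $\Pi_n$ and those in $\Omega_n$, applied coordinatewise to the constituent paths, and then to track how the weights transform. The crucial observation is that Theorem \ref{theorem:BS} already relates a single big $\ell$-Schr\"oder path generating function $v(\mathcal{B}^r_{s,t+1})$ to a single small one $v(\mathcal{S}^{i+\ell}_{s,t})$ via the factor $c_{t+1}^{-\ell}\sum_{i=0}^\ell b_{i,t}c_{t+1}^{\ell-i}$. The right-hand side of \eqref{eq:PiOmega} is precisely a product of $n$ such factors, one for each index $t=0,\ldots,n-1$, so the strategy is to realize this product as the cumulative effect of transforming each of the $n$ paths in a system individually.

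First I would set up the correspondence at the level of endpoints. A system in $\Pi_n$ has its $i$-th path running from $(-i,i\bmod\ell)$ to some target $((j+1)\ell,0)$, while a system in $\Omega_n$ has its $i$-th path running from $(-i,(i\bmod\ell)+\ell)$ to $(j\ell,0)$; the shift of the starting height by $\ell$ and the target by one block is exactly the index shift $r\mapsto r+\ell$ and $t+1\mapsto t$ appearing in Theorem \ref{theorem:BS}. So for a fixed permutation $\sigma$ matching sources to targets, each individual path is a big $\ell$-Schr\"oder path of some type $\mathcal{B}^{r}_{s,t+1}$, and I would apply the single-path bijection underlying Lemma \ref{lemma:BM} and Lemma \ref{lemma:MS} (peeling off the final eastern run and rotating the decomposition) to convert it into the corresponding small path in $\mathcal{S}^{r+\ell}_{s,t}$, simultaneously substituting $c_k:=c_{k+1}$ and dividing out the factor $c_{t+1}^{-\ell}\sum_{i=0}^\ell b_{i,t}c_{t+1}^{\ell-i}$. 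Collecting these $n$ factors reproduces the product $\prod_{t=0}^{n-1}(c_{t+1}^{-\ell}\sum_{i=0}^\ell b_{i,t}c_{t+1}^{\ell-i})$ on the right of \eqref{eq:PiOmega}.

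The main obstacle I anticipate is preserving the \emph{non-intersecting} condition and the sign $\sign(\sigma)$ under the coordinatewise transformation. The single-path bijections of Lemmas \ref{lemma:BM} and \ref{lemma:MS} are designed for one path in isolation; I must check that applying them in parallel to all $n$ paths does not create shared starting or ending points between distinct paths, and that it sends a non-intersecting system to a non-intersecting system with the same underlying permutation (so that $\sign(\sigma)$ is unchanged and the signed weight is respected). The key point to verify is that the rotation-and-truncation operation acts locally enough — it only modifies each path within its own horizontal band of block-columns — so that disjointness is inherited; I would argue this by showing the transformed $i$-th path occupies the same family of lattice cells relative to its neighbours, shifted uniformly, so that crossings and coincidences are preserved. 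Once the bijection is shown to respect both disjointness and $\sigma$, summing the per-path weight identity \eqref{eq:BS} over all systems yields \eqref{eq:PiOmega}, since the weight of a system factors as $\sign(\sigma)\prod_i v(\omega_i)$ and each factor transforms by the single-path rule.
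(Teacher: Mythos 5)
There is a genuine gap, and it sits exactly where you flagged your ``main obstacle'' --- but the problem is worse than preserving disjointness. Your plan treats Theorem \ref{theorem:BS} as if it gave a weight-preserving bijection from $\mathcal{B}_{s,t+1}^r$ onto $\mathcal{S}_{s,t}^{r+\ell}$ (times a scalar factor). It does not: the right-hand side of \eqref{eq:BS} involves the \emph{sum} $\sum_{i=0}^{r}c_s^{r-i}\left(v(\mathcal{S}_{s,t}^{i+\ell})\middle|_{c_k:=c_{k+1}}\right)$ over all $i=0,\ldots,r$. Concretely, the composite of the bijections in Lemmas \ref{lemma:BM} and \ref{lemma:MS} sends a big path $\omega\in\mathcal{B}_{s,t+1}^r$ to a small path in $\mathcal{S}_{s,t}^{i+\ell}$, where $i$ is determined by how many steps $\mathsf{a}_{\ell+1}$ the associated marked path begins with; this $i$ can be any value in $\{0,\ldots,r\}$, not necessarily $r$. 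So ``convert each path into the corresponding small path in $\mathcal{S}^{r+\ell}_{s,t}$'' is not a well-defined operation, and the cross terms with $i<r$ --- which do not match the prescribed starting points of $\Omega_n$ --- must somehow cancel out of the signed sum. Your proposal contains no mechanism for that cancellation, and a per-path, system-by-system bijection cannot produce it, because the cancellation is inherently a phenomenon of the signed (determinantal) sum, not of individual configurations.

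This is precisely why the paper's proof is algebraic rather than bijective: it applies the Lindstr\"om--Gessel--Viennot lemma to write $v(\Pi_n)=\det\left(v(\mathcal{B}_{-\lfloor i/\ell\rfloor,j+1}^{i\bmod\ell})\right)_{i,j=0}^{n-1}$ and $v(\Omega_n)=\det\left(v(\mathcal{S}_{-\lfloor i/\ell\rfloor,j}^{(i\bmod\ell)+\ell})\right)_{i,j=0}^{n-1}$, substitutes \eqref{eq:BS} into the first determinant (the factor $c_{t+1}^{-\ell}\sum_{i=0}^{\ell}b_{i,t}c_{t+1}^{\ell-i}$ depends only on the column, so it pulls out), and then removes the unwanted terms $i<r$ by elementary row operations: within each block of $\ell$ consecutive rows, row $i$ is the ``pure'' row $\left(v(\mathcal{S}_{-\lfloor i/\ell\rfloor,j}^{(i\bmod\ell)+\ell})\middle|_{c_k:=c_{k+1}}\right)_j$ plus a linear combination of the earlier rows, so the cross terms vanish under the determinant. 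The multilinearity and alternation of the determinant is exactly the cancellation your bijective plan lacks. Your secondary concern is also real but unresolved: the maps in Lemmas \ref{lemma:BM} and \ref{lemma:MS} rotate block decompositions and relocate the terminal run of $\mathsf{a}_{\ell+1}$ steps, so they are not ``local'' or ``uniform'' shifts, and applying them in parallel need not preserve pairwise disjointness; the paper never has to confront this because LGV's signed sum already accounts for intersecting systems. To repair your argument, the natural move is to invoke LGV at the outset --- at which point you recover the paper's proof.
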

\begin{proof}
	From the Lindstr\"om--Gessel--Viennot lemma \cite{GV}, we know that $v(\Pi_n)$ and $v(\Omega_n)$ are
	written as determinants of generating functions of big and small $\ell$-Schr\"oder paths as
	\begin{align}
		\label{eq:detB}
		v(\Pi_n)&=\det(v(\mathcal{B}_{-\lfloor i/\ell \rfloor,j+1}^{i\bmod\ell}))_{i,j=0}^{n-1},\\
		\label{eq:detS}
		v(\Omega_n)&=\det(v(\mathcal{S}_{-\lfloor i/\ell \rfloor,j}^{(i\bmod\ell)+\ell}))_{i,j=0}^{n-1},
	\end{align}
	where $\lfloor i/\ell \rfloor$ and $i\bmod\ell$ denotes the quotient and the remainder of 
	$i$ divided by $\ell$.
	Substituting \eqref{eq:BS} into \eqref{eq:detB}, we have
	\begin{align}
		\label{eq:detB_henkei}
		\begin{aligned}
		&\det(v(\mathcal{B}_{-\lfloor i/\ell\rfloor,j+1}^{i\bmod\ell}))_{i,j=0}^{n-1}\\
		&=
		\left(\prod_{t=0}^{n-1}c_{t+1}^{-\ell}\sum_{i=0}^\ell b_{i,t}c_{t+1}^{\ell-i}\right)
		\det\left(\sum_{p=0}^{i\bmod\ell}c_{-\lfloor i/\ell\rfloor}^{(i\bmod\ell)-p}
		\left(v(\mathcal{S}_{-\lfloor i/\ell\rfloor,j}^{p+\ell})\middle|_{c_k:=c_{k+1}}\right)
	\right)_{i,j=0}^{n-1}.
		\end{aligned}
	\end{align}
	By performing the elementary row operations on the determinant in \eqref{eq:detB_henkei},
	we obtain
	\begin{align}
		\label{eq:detB_henkei2}
		\det(v(\mathcal{B}_{-\lfloor i/\ell\rfloor,j+1}^{i\bmod\ell}))_{i,j=0}^{n-1}
		=
		\left(\prod_{t=0}^{n-1}c_{t+1}^{-\ell}\sum_{i=0}^\ell b_{i,t}c_{t+1}^{\ell-i}\right)
		\det\left(v(\mathcal{S}_{\lfloor i/\ell\rfloor,j}^{(i\bmod\ell)+\ell})
		|_{c_k:=c_{k+1}}\right)_{i,j=0}^{n-1}.
	\end{align}
	From \eqref{eq:detB_henkei2} and \eqref{eq:detS}, we obtain \eqref{eq:PiOmega}.
\end{proof}
The generating functions also satisfy the following.
\begin{lemma}
	\label{lemma:OmegaPi}
	For any positive integer $n$, we have
	\begin{align}
		\label{eq:OmegaPi}
		v(\Omega_n)=c_0^\ell\left(\prod_{i=1}^{n-1}c_{i}^{\ell+1}\right)
	\left(\prod_{i=1}^{\lfloor(n-1)/\ell\rfloor}b_{0,-i}\right)
	v(\Pi_{n-1}).
\end{align}
\end{lemma}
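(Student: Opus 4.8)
The plan is to prove \eqref{eq:OmegaPi} by a weight-preserving bijection that exhibits each non-intersecting system in $\Omega_n$ as a system in $\Pi_{n-1}$ decorated by a forced boundary piece whose total weight is exactly the prefactor $c_0^\ell(\prod_{i=1}^{n-1}c_i^{\ell+1})(\prod_{i=1}^{\lfloor(n-1)/\ell\rfloor}b_{0,-i})$. As a scaffold I would first rewrite both sides through the Lindstr\"om--Gessel--Viennot lemma exactly as in \eqref{eq:detS} and \eqref{eq:detB}, so that $v(\Omega_n)=\det(v(\mathcal S^{(i\bmod\ell)+\ell}_{-\lfloor i/\ell\rfloor,j}))_{i,j=0}^{n-1}$ while $v(\Pi_{n-1})=\det(v(\mathcal B^{i\bmod\ell}_{-\lfloor i/\ell\rfloor,j+1}))_{i,j=0}^{n-2}$. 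This lets me argue at the level of individual non-intersecting systems and then read the conclusion back as the determinant identity; since the source and sink orderings are compatible, only the identity permutation survives and $\sign$ will be constant ($=+1$) throughout, so the bijection only has to track weights.

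The combinatorial heart is the forced structure of an $\Omega_n$-system. The source $s_0$ and the sink $t_0=(0,0)$ both lie on the anti-diagonal $x+y=0$, i.e.\ on level $0$; since an up-step raises the level by one, the path joining them uses no up-step and is forced to be the pure descent $\mathsf a_{\ell+1}^{\ell}$ of weight $c_0^\ell$, which supplies the leading factor. The remaining $n-1$ small paths, whose sources sit at height $\ge\ell$, must then be converted into the big paths of $\Pi_{n-1}$, whose sources sit at height $<\ell$ and whose sinks lie one level higher than the corresponding $\Omega$-sinks. I would carry out this conversion at the \emph{sink} end, lowering each path by exactly one level; the surgery that removes one terminal ascent deposits $\ell+1$ descents at the corresponding positive level, producing $\prod_{i=1}^{n-1}c_i^{\ell+1}$. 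The height-lift between small and big sources is governed by the rotation underlying Lemma \ref{lemma:MS}, and I would show that at each strictly negative level the left-most ($\rho=0$) source is squeezed by non-intersection and by the small-path constraint into a forced vertical step $\mathsf a_0$, contributing one factor $b_{0,-m}$ per full block of $\ell$ sources below level $0$, i.e.\ exactly $\prod_{i=1}^{\lfloor(n-1)/\ell\rfloor}b_{0,-i}$.

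The step I expect to be the main obstacle is precisely the level-and-weight bookkeeping of this small-to-big conversion, together with the verification that non-intersection is preserved under the surgery. The subtlety is that the naive surgery at the source end would deposit the extra descents at source-side \emph{negative} levels, contradicting the positive indices $c_1,\dots,c_{n-1}$; the conversion must therefore be organized at the sink end, mirroring the $\rt(\cdot)$-rotation of Lemma \ref{lemma:MS}, and one must confirm that the forced $\mathsf a_0$ steps land at precisely the levels $-1,\dots,-\lfloor(n-1)/\ell\rfloor$ and nowhere else. An alternative I would keep in reserve is a purely determinantal route: invert the lower-triangular relation of Theorem \ref{theorem:BS} to express each small-path entry $v(\mathcal S^{(i\bmod\ell)+\ell}_{\cdot,j})$ through big-path entries, and then reduce the $n\times n$ determinant to the $(n-1)\times(n-1)$ one by column operations followed by a single cofactor expansion, with the prefactor emerging as the product of pivots. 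This trades the geometric subtleties for a more mechanical computation, at the cost of transparency.
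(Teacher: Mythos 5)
Your plan is, in substance, the paper's own proof read from $\Omega_n$ to $\Pi_{n-1}$ rather than the other way around, and your accounting of the prefactor is exactly right: the zeroth path is forced to be the pure descent $\mathsf{a}_{\ell+1}^{\ell}$ at level $0$ (weight $c_0^{\ell}$); the path into the sink $((i+1)\ell,0)$ is forced to end with $\ell+1$ descents at level $i+1$ (giving $\prod_{i=1}^{n-1}c_i^{\ell+1}$); and the path leaving the source of index $m\ell$, $m\geq 1$, is forced to begin with an $\mathsf{a}_0$ at level $-m$ (giving $\prod_{m=1}^{\lfloor(n-1)/\ell\rfloor}b_{0,-m}$). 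Two side remarks: the preliminary LGV reduction is harmless but unneeded, since one can (as the paper does) map non-intersecting systems to non-intersecting systems directly; and your level-$0$ argument for the zeroth path tacitly uses the source placement of \eqref{eq:detS}, namely $s_0=(-\ell,\ell)$, rather than the tuple $\mathbf{s}$ displayed in the definition of $\Omega_n$, which disagrees with \eqref{eq:detS} by a horizontal shift of $\ell$ and under which the lemma would fail ($\Omega_1$ would be empty) --- you resolved that discrepancy the right way.

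The one place where your plan would get you into trouble is precisely the step you flag as the main obstacle, the conversion between the trimmed small paths and the big paths of $\Pi_{n-1}$: no rotation is needed there, and importing the $\rt(\cdot)$-surgery of Lemma \ref{lemma:MS} would be actively harmful. In the paper the conversion is a rigid translation by $(\ell+1,-(\ell+1))$ (northwestward by $(-(\ell+1),\ell+1)$ in the direction $\Pi_{n-1}\to\Omega_n$). Since this vector has coordinate sum zero, it preserves the level $(x+y)/\ell$ of every lattice point, hence preserves every weight $b_{i,j}$ and $c_j$ verbatim --- there is no bookkeeping to do --- and, being a simultaneous rigid motion of all paths, it preserves disjointness, leaving only the routine check that the appended forced pieces avoid everything else. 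The rotation of Lemma \ref{lemma:MS}, by contrast, permutes the steps within a path and so changes the set of lattice points it visits, which would wreck the non-intersection argument at the system level; it would also drag in the substitution $c_k:=c_{k+1}$ of that lemma, which correctly does not occur in \eqref{eq:OmegaPi}. Once the translation is used, your remaining slips evaporate: corresponding sinks of $\Omega_n$ and $\Pi_{n-1}$ are the \emph{same} points (not one level apart), nothing ever ``removes a terminal ascent,'' and no path is ``lowered by one level'' --- levels are preserved throughout, only heights drop by $\ell+1$.
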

\begin{proof}
	There is a natural bijection between $\Pi_{n-1}$ and $\Omega_{n}$ that
	shift all paths in $\pmb{\omega}\in\Pi_{n-1}$ northwestward.
	Let $\pmb{\omega}=(\omega_0,\ldots,\omega_{n-2})\in\Pi_{n-1}$.
	The bijection is defined as $\pmb{\omega}\mapsto 
	\pmb{\omega}'\triangleq(\mathsf{a}_{\ell+1}^{\ell},\omega_0',\ldots,\omega_{n-2}')\in\Omega_n$ where
	the word for $\omega'_i$ is defined as
	\begin{align}
\omega_i'\triangleq
		\left\{
			\begin{array}{ll}
			\omega_i\mathsf{a}_{\ell+1}^{\ell+1}&{\rm if\ }i\bmod \ell\not\equiv \ell-1,\\
			\mathsf{a}_{0}\omega_i\mathsf{a}_{\ell+1}^{\ell+1}&{\rm if\ }i\bmod \ell\equiv \ell-1.
			\end{array}
		\right.
	\end{align}
	The weight $v(\pmb{\omega}')$ of the new path system satisfies
	\begin{align}
		v(\pmb{\omega}')=
		c_0^\ell\left(\prod_{i=1}^{n-1}c_{i}^{\ell+1}\right)
	\left(\prod_{i=1}^{\lfloor(n-1)/\ell\rfloor}b_{0,-i}\right)
	v(\pmb{\omega}).
	\end{align}
	Summing up over $\Pi_{n-1}$ leads to \eqref{eq:OmegaPi}.
Figure \ref{fig:PiOmega} shows an example of the bijection.
\end{proof}
\begin{figure}[htbp]
  \begin{minipage}[b]{0.4\linewidth}
    \centering
	\includegraphics{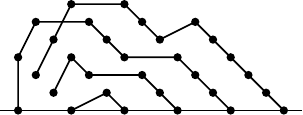}
  \end{minipage}
  \begin{minipage}[b]{0.06\linewidth}
    \centering
	$\underset{1:1}{\longrightarrow}$
\end{minipage}
  \begin{minipage}[b]{0.4\linewidth}
    \centering
	\includegraphics{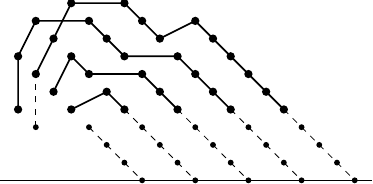}
  \end{minipage}
  \caption{Left: a 
	  non-intersecting tuplet of big $3$-Schr\"oder paths in the set $\Pi_4$.
  Right: a non-intersecting tuplet of small $3$-Schr\"oder paths in the set $\Omega_5$.
The dotted lines are frozen steps due to the non-intersectingness.}
  \label{fig:PiOmega}
\end{figure}

Combining Lemma \ref{lemma:PiOmega} and Lemma \ref{lemma:OmegaPi}, we obtain
a recurrence for $v(\Pi_n)$.
\begin{lemma}For any positive integer $n$, we have
	\begin{align}
		\label{eq:Pi_recurrence}
		v(\Pi_n)=
		\left(\prod_{i=2}^n c_i\right)
		\left(\prod_{i=1}^{\lfloor(n-1)/\ell\rfloor}b_{0,-i}\right)
		\left(\prod_{t=0}^{n-1}\sum_{i=0}^\ell b_{i,t}c_{t+1}^{\ell-i}\right)
		(v(\Pi_{n-1})|_{c_{k}:=c_{k+1}}).
	\end{align}
\end{lemma}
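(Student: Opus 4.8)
The plan is to combine the two preceding lemmas directly: Lemma~\ref{lemma:PiOmega} expresses $v(\Pi_n)$ through $v(\Omega_n)$ (after the substitution $c_k:=c_{k+1}$), while Lemma~\ref{lemma:OmegaPi} expresses $v(\Omega_n)$ back through $v(\Pi_{n-1})$. Feeding the second identity into the first and simplifying the resulting monomial in the $c_i$ yields the asserted recurrence. No new combinatorics is involved; the entire argument is algebraic, and the only genuine subtlety is making the simultaneous substitution $c_k:=c_{k+1}$ interact correctly with the finite products that carry the $c$-exponents.

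First I would note that $\,\cdot\,|_{c_k:=c_{k+1}}$ is a ring homomorphism that merely renames the variables $c_k$ and fixes every $b_{i,j}$; in particular it distributes over the finite sums and products appearing in Lemma~\ref{lemma:OmegaPi}. Applying it to that lemma and shifting each index (so that $c_0^\ell\mapsto c_1^\ell$ and $\prod_{i=1}^{n-1}c_i^{\ell+1}\mapsto\prod_{i=2}^{n}c_i^{\ell+1}$, while the $b_{0,-i}$ are untouched) gives
\begin{align}
    \left(v(\Omega_n)\middle|_{c_k:=c_{k+1}}\right)
    =c_1^{\ell}\left(\prod_{i=2}^{n}c_i^{\ell+1}\right)
    \left(\prod_{i=1}^{\lfloor(n-1)/\ell\rfloor}b_{0,-i}\right)
    \left(v(\Pi_{n-1})\middle|_{c_k:=c_{k+1}}\right).
\end{align}

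Next I would substitute this identity into Lemma~\ref{lemma:PiOmega}, splitting the remaining factor as $\prod_{t=0}^{n-1}c_{t+1}^{-\ell}=\prod_{t=1}^{n}c_t^{-\ell}$ times $\prod_{t=0}^{n-1}\sum_{i=0}^{\ell}b_{i,t}c_{t+1}^{\ell-i}$. The latter product already matches the $b$-sum in the target, and the factor $\prod_{i=1}^{\lfloor(n-1)/\ell\rfloor}b_{0,-i}$ carries over verbatim, so it remains only to reconcile the pure $c$-monomial. Collecting $c_1^{\ell}\prod_{i=2}^{n}c_i^{\ell+1}$ against $\prod_{t=1}^{n}c_t^{-\ell}$, the exponent of $c_1$ becomes $\ell-\ell=0$ while each $c_i$ with $2\le i\le n$ gets exponent $(\ell+1)-\ell=1$, leaving exactly $\prod_{i=2}^{n}c_i$. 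This reproduces \eqref{eq:Pi_recurrence}. The main obstacle is purely clerical, namely keeping the index shifts induced by the substitution aligned with the bounds of the three products, so I would carry out the exponent count explicitly, as above, rather than leave it implicit.
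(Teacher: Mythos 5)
Your proposal is correct and is exactly the argument the paper intends: the paper gives no separate proof of this lemma, stating only that it follows by ``combining Lemma \ref{lemma:PiOmega} and Lemma \ref{lemma:OmegaPi}'', which is precisely your substitution-and-exponent-count. Your explicit bookkeeping of the index shift under $c_k:=c_{k+1}$ and the cancellation $c_1^{\ell}\prod_{i=2}^{n}c_i^{\ell+1}\cdot\prod_{t=1}^{n}c_t^{-\ell}=\prod_{i=2}^{n}c_i$ fills in the details the paper leaves implicit.
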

From the recurrence \eqref{eq:Pi_recurrence}, we have an explicit formula for $v(\Pi_n)$.
This is the main theorem of the latter half of this paper.
\begin{theorem}
	\label{theorem:Pi_explicit}
	For any non-negative integer $n$, we have
	\begin{align}
		v(\Pi_n)=
		\left(\prod_{i=2}^nc_i^{i-1}\prod_{j=1}^{\lfloor(i-1)/\ell\rfloor}b_{0,-j}\right)
		\left(\prod_{s=1}^{n-1}\prod_{t=0}^{s-1}\sum_{i=0}^\ell b_{i,t}c_{s}^{\ell-i}\right).
	\end{align}
\end{theorem}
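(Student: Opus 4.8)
The plan is to prove the formula by induction on $n$, using the recurrence \eqref{eq:Pi_recurrence} established in the previous lemma. For the base case $n=0$ the tuplet $\Pi_0$ is empty, so $v(\Pi_0)=1$, which agrees with the claimed expression since every product is empty. For the inductive step I would assume the closed form for $v(\Pi_{n-1})$, substitute it into \eqref{eq:Pi_recurrence}, and check that the prefactor $(\prod_{i=2}^n c_i)(\prod_{i=1}^{\lfloor(n-1)/\ell\rfloor}b_{0,-i})(\prod_{t=0}^{n-1}\sum_{i=0}^\ell b_{i,t}c_{t+1}^{\ell-i})$ multiplied by the shifted hypothesis $v(\Pi_{n-1})|_{c_k:=c_{k+1}}$ reassembles into the asserted expression for $v(\Pi_n)$. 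The whole argument is therefore a verification that the new data contributed by the recurrence exactly fills in the gap between the shifted hypothesis and the target.

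The key preliminary computation is the effect of the substitution $c_k:=c_{k+1}$ on the closed form for $v(\Pi_{n-1})$. Since this operator shifts only $c$-indices and leaves every $b$ untouched, it acts by reindexing: each factor $c_i^{\,i-1}$ becomes $c_{i+1}^{\,i-1}$, and each block $\sum_i b_{i,t}c_s^{\ell-i}$ becomes $\sum_i b_{i,t}c_{s+1}^{\ell-i}$. After the reindexings $i\mapsto i+1$ and $s\mapsto s+1$ I would record $v(\Pi_{n-1})|_{c_k:=c_{k+1}}$ in a form whose index ranges can be matched directly against those in the target.

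The verification then splits into three independent bookkeeping checks. (i) \emph{Powers of $c_i$}: the shift turns $\prod_{i=2}^{n-1}c_i^{\,i-1}$ into $\prod_{i=3}^{n}c_i^{\,i-2}$, and multiplying by the recurrence factor $\prod_{i=2}^n c_i$ telescopes the exponents back to $\prod_{i=2}^n c_i^{\,i-1}$, as required. (ii) \emph{The factors $b_{0,-j}$}: being untouched by the shift, the inductive product only needs the single new term $\prod_{j=1}^{\lfloor(n-1)/\ell\rfloor}b_{0,-j}$ supplied by the recurrence to extend the outer index up to $i=n$. (iii) \emph{The blocks $\sum_i b_{i,t}c_s^{\ell-i}$}: the shifted hypothesis supplies, for each $s$, the factors with $t=0,\ldots,s-2$, while the recurrence factor $\prod_{t=0}^{n-1}\sum_i b_{i,t}c_{t+1}^{\ell-i}$ supplies precisely the missing diagonal term $t=s-1$ for each $s$; together they complete each inner product to the full range $t=0,\ldots,s-1$.

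I expect the main obstacle to lie entirely in step (iii): one must confirm that, after the reindexing $s\mapsto s+1$, the $n$ diagonal factors coming from the recurrence align with $t=s-1$ so that nothing is double-counted and nothing is omitted. Once the three matchings are assembled, the product of the prefactor and the shifted hypothesis equals the claimed closed form and the induction closes. As a calibration of the boundary conventions I would also check the formula directly at $n=1$, where Theorem \ref{theorem:BS} gives $v(\Pi_1)=v(\mathcal{B}_{0,1}^0)=\sum_{i=0}^\ell b_{i,0}c_1^{\ell-i}$; this small case pins down the exact upper limits of the outer products and confirms that the diagonal-absorption mechanism of step (iii) reproduces it.
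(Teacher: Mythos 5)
Your strategy---induction on $n$ via the recurrence \eqref{eq:Pi_recurrence}, with base case $v(\Pi_0)=1$---is exactly how the paper obtains this theorem, and your bookkeeping items (i) and (ii) are carried out correctly. But the delicate point you yourself flagged, item (iii), does not come out the way you claim: the recurrence contributes $n$ diagonal blocks $\sum_i b_{i,t}c_{t+1}^{\ell-i}$ for $t=0,\ldots,n-1$, i.e.\ one block $\sum_i b_{i,s-1}c_s^{\ell-i}$ for \emph{each} $s=1,\ldots,n$, while the shifted hypothesis contributes the sub-diagonal blocks $t\le s-2$ only for $s\le n$. Assembling them therefore yields
\begin{align}
	\left(\prod_{i=2}^nc_i^{i-1}\prod_{j=1}^{\lfloor(i-1)/\ell\rfloor}b_{0,-j}\right)
	\left(\prod_{s=1}^{\,n\,}\prod_{t=0}^{s-1}\sum_{i=0}^\ell b_{i,t}c_{s}^{\ell-i}\right),
\end{align}
with the outer product running to $s=n$, not to $s=n-1$ as in the printed statement. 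If you instead insist on the printed target (and feed the printed formula in as the hypothesis), the induction leaves the lone factor $\sum_i b_{i,n-1}c_n^{\ell-i}$ unabsorbed and does not close. In short, the statement as printed contains an off-by-one error, and a correct execution of your plan proves the corrected formula above, not the formula you set out to verify; asserting that the indices match papers over a real discrepancy.

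Your own calibration at $n=1$ already detects this, though you report the opposite conclusion: Theorem \ref{theorem:BS} indeed gives $v(\Pi_1)=\sum_{i=0}^\ell b_{i,0}c_1^{\ell-i}$, whereas the printed formula evaluates to $1$ at $n=1$, because \emph{both} of its products are empty (the second runs over $1\le s\le 0$). So this check refutes, rather than confirms, the printed upper limit. Independent corroboration that the corrected limit $s\le n$ is the right one: the corrected formula contains $\sum_{s=1}^{n}s=\binom{n+1}{2}$ blocks, which is precisely the exponent of $\sum_i a_ia_{\ell+1}^{\ell-i}$ in Theorem \ref{corollary:det_moment}, a result the paper deduces by specializing the present theorem; the printed limit $s\le n-1$ would give $\binom{n}{2}$ there instead. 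Your proof plan is the right one (and the same as the paper's), but to be a correct proof it must identify and repair this typo rather than claim agreement at the index-matching step.
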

As a direct consequence of Theorem \ref{theorem:Pi_explicit}, 
we can show Theorem 
\ref{corollary:det_moment}
presented at the beginning of the chapter.
\begin{proof}[Proof of Theorem \ref{corollary:det_moment}]
	Reduce the variables appearing in the weight $v$ 
	as $b_{i,j}:=a_i$ and $c_j:=a_{\ell+1}$ for all $j\in\mathbb{Z}$.
	Then we have
 	$v(\mathcal{B}^{i\bmod\ell}_{-\lfloor i/\ell\rfloor,j+1})=
	w(N_{i+\ell(j+1)})$.
	Thus we have 
	$\det(N_{\ell(i+1)+j})_{i,j=0}^{n-1}
	=\det(v(\mathcal{B}^{i\bmod\ell}_{-\lfloor i/\ell\rfloor,j+1}))_{i,j=0}^{n-1}
	=v(\Pi_n)$.
\end{proof}
\begin{remark}
We obtain Theorem 6.14 in \cite{KimStanton} by setting $\ell=2$ in Theorem \ref{theorem:Pi_explicit}. 
It is worth highlighting that distinct orthogonal functions, 
namely $R_I$ polynomials and $2$-LBPs, 
result in the same factorization formula.
\end{remark}
\begin{remark}
	By considering the one-to-one correspondence between the domino tilings of the 
	Aztec diamonds and the non-intersecting paths in $\Pi_n$ when $\ell=1$,
	one will find that Theorem \ref{theorem:Pi_explicit} provides 
	another proof of 
	refined Aztec diamond theorem that first appeared in Ciucu~\cite[Theorem 3.1]{C}.
\end{remark}
\begin{remark}
	The problem of finding tiling models with a one-to-one correspondence between 
	the non-intersecting paths in
	$\Pi_n$ for $\ell\geq 2$ is still open.
	It would be interesting to seek such tiling models.
\end{remark}
\section*{Acknowledgments}
The author would like to thank Professor Shuhei Kamioka for 
helpful discussions and comments.

\end{document}